\newtheorem{Theorem}{Theorem}[section]
\newtheorem{Proposition}[Theorem]{Proposition}
\newtheorem{Lemma}[Theorem]{Lemma}
\newtheorem{Corollary}[Theorem]{Corollary}
\theoremstyle{definition}
\newtheorem{Remark}[Theorem]{Remark}
\newtheorem{Warning}[Theorem]{Warning}
\def\sC{{\mathcal{C}}}
\def\sHom{{\mathcal{H}}om}
\def\sM{{\mathcal{M}}}
\def\sN{{\mathcal{N}}}
\def\Tang{{\mathcal{CT}}_m}
\def\K{\mathcal{V}ar}
\def\1{{\mathbf{1}}}
\def\k{{\mathbb C}}
\def\sl{\mathfrak{sl}}
\def\g{\mathfrak{g}}
\newcommand{\C}{\mathbb{C}}
\newcommand{\Cone}{\operatorname{Cone}}
\def\O{{\mathcal O}}
\def\sA{{\mathcal{A}}}
\def\sE{{\mathcal{E}}}
\def\sF{{\mathcal{F}}}
\def\sL{{\mathcal{L}}}
\def\sH{{\mathcal{H}}}
\def\sT{{\mathcal{T}}}
\def\sP{{\mathcal{P}}}
\def\sQ{{\mathcal{Q}}}
\def\uk{{\underline{k}}}
\def\uw{{\underline{w}}}
\def\uo{{\underline{0}}}
\def\w{{\underline{w}}}
\def\tY{\tilde{Y}}
\newcommand{\G}{\mathbb{G}}
\newcommand{\T}{\mathsf{T}}
\newcommand{\Hom}{\operatorname{Hom}}
\newcommand{\End}{\operatorname{End}}
\newcommand{\spn}{\operatorname{span}}
\begin{document}

\title[Knot homology IV]{Knot homology via derived categories of coherent sheaves IV, coloured links}

\author{Sabin Cautis}
\email{cautis@math.ubc.ca}
\address{Department of Mathematics \\ University of British Columbia \\ Vancouver BC, Canada}

\author{Joel Kamnitzer}
\email{jkamnitz@math.utoronto.ca}
\address{Department of Mathematics \\ University of Toronto \\ Toronto ON, Canada}

\begin{abstract}
We define a deformation of our earlier link homologies for fundamental representations of $ \sl_m $. The deformed homology of a link is isomorphic to the deformed homology of the disjoint union of its components. Moreover, there exists a spectral sequence starting with the old homology and converging to this deformed homology.
\end{abstract}

\maketitle
\tableofcontents

\section{Introduction}

\subsection{Geometric construction of Khovanov homology}

Khovanov homology is an invariant of links, taking values in bigraded vector spaces.  Khovanov \cite{K} originally defined this theory in an algebraic/combinatorial fashion.  In \cite{CK1} we gave a construction of Khovanov homology in the context of geometric representation theory.  Our construction used derived categories of coherent sheaves on certain flag-like varieties constructed using the affine Grassmannian of $ PGL_2$.

Khovanov homology categorifies the Jones polynomial, which is the Reshetikhin-Turaev invariant associated to the Lie algebra $ \sl_2$ and its standard representation.  One of our main motivations in \cite{CK1} was to find a setting for Khovanov homology would generalize to other semisimple Lie algebras. This was partially accomplished in \cite{CK2, C2} where we defined knot homology theories categorifying the Reshetikhin-Turaev invariants for $ \sl_m $ representations. For the case of the standard representation our construction was more or less straightforward, whereas for other fundamental representations, our construction used a categorified form of skew-Howe duality and the theory of categorical $ \sl_n$ actions \cite{CKL1}.

\subsection{Batson-Seed coloured link homology}
Recently, Batson-Seed \cite{BS} defined a Khovanov-like homology theory for coloured links, where each component of the link is coloured\footnote{This is not to be confused with the colouring of links by representations of $\sl_m$. In this paper, we will speak about links whose components are labeled by (fundamental) representations and coloured by complex numbers.}  with a complex number.  When all the components have the same colour, their theory is the same as Khovanov homology.  On the other hand, when all the components have different colours, their theory gives the tensor product of the  Khovanov homologies of the components.  Moreover, they constructed a spectral sequence from the first case to the second case.

\subsection{Geometric coloured link homology}
Batson asked us if the Batson-Seed theory had a natural realization in our geometric framework and moreover if it was possible to extend the construction from the case of $ \sl_2 $ to $ \sl_m$. In this paper we answer these questions in the affirmative.

More precisely, we do the following.
\begin{enumerate}
\item We construct a kernel-valued invariant of coloured, labeled tangles (Theorem \ref{thm:main1}).  This invariant recovers our previous construction \cite{CK1,CK2,C2} when all colours are equal.
\item Given a link $ K$ with components $ K_1, \dots, K_r $, we construct a link invariant $ \Psi(K)_{\C^r} $ taking values in the derived category of graded modules over a polynomial ring in $ r $ variables (Proposition \ref{prop:homology}). From this invariant, we obtain a spectral sequence from the doubly-graded vector space $ \Psi(K) $ to the singly-graded vector space $ \Psi(K_1) \otimes \cdots \otimes \Psi(K_r)$ (Theorem \ref{thm:main2}).
\end{enumerate}

This provides us with a coloured link homology theory and a spectral sequence as in Batson-Seed. Moreover, as in their work, when all components have different colours our homology gives the tensor product of the homology associated to the components. Note however, that our construction works for any $ \sl_m $ and all fundamental representations.

\subsection{Deformation of varieties and deformation of kernels}
Our constructions \cite{CK1, CK2, C2} made use of iterated convolutions of spherical Schubert varieties in the affine Grassmannian.  More precisely, we considered the spaces of flags of lattices
$$
Y(\uk) := \{ L_0 = \C[z]^m \subset L_1 \subset  \dots \subset L_n \subset \C[z,z^{-1}]^m : z L_i \subset L_{i-1}, \dim(L_i/L_{i-1}) = k_i \}
$$
Using the Beilinson-Drinfeld Grassmannian, it is easy to define a deformation $ Y(\uk)_{\C^n} $ of this variety.

Our constructions \cite{CK1, CK2, C2} were based on assigning certain Fourier-Mukai kernels to caps, cups, and crossings. In this paper we prove that these kernels deform (section \ref{sec:kernelfamily}). This is easy to do for caps and cups and also for crossings involving the standard representation, because in these cases the kernels are (roughly) the structure sheaves of certain correspondences which deform.

However, when a crossing involves other fundamental representations, the resulting kernel is constructed as a Rickard complex (a.k.a. Chuang-Rouquier complex) using the theory of categorical $\sl_n$-actions. The terms in this complex do not deform. Thus, it is quite surprising and non-trivial that the total complex does deform. To prove this we identify the total complex with the push-forward (from an open subset) of a line bundle, generalizing the main result of \cite{C1} (see Proposition \ref{prop:equivalence}). This generalization of \cite{C1} is the other main result in this paper. In particular, Corollary \ref{cor:affinebraid} can be used to show that there exists an action of the affine braid group on our categories (this is not used in the current paper but is of independent interest).

\subsection*{Acknowledgements}
We thank Josh Batson for telling us about his work and for raising the questions which lead to the present paper. J.K. was supported by an NSERC discovery grant, a Sloan fellowship, and a Simons fellowship. S.C. was supported by an NSERC discovery grant and the Templeton Foundation.

\section{Geometric background}\label{sec:geometry}

\subsection{Notation}\label{sec:notation}

We will only consider schemes over $\k$. Denote by $D(Y)$ the bounded derived category of coherent sheaves on a scheme $Y$. If $Y$ also carries an action of $\k^\times$ then we denote by $D^{\k^\times}(Y)$ the derived category of $\k^\times$-equivariant sheaves.

All functors will be derived. So, for example, if $f: Y \rightarrow X$ is a morphism then $f_*: D(Y) \rightarrow D(X)$ denotes the derived pushforward.

\begin{Warning} There is one important exception to this rule, namely if $j: U \rightarrow Y$ is an open embedding then $j_*$ will always denote the plain (underived) pushforward.
\end{Warning}

Recall also the terminology of Fourier-Mukai kernels \cite{H}. Given an object $\sP \in D(X \times Y)$ (whose support is proper over $X $ and $Y$) we may define the associated transform, which is the functor
\begin{align*}
\Phi_\sP : D(X) & \rightarrow D(Y) \\
\sF & \mapsto {\pi_2}_* (\pi_1^* (\sF) \otimes \sP)
\end{align*}
where $\pi_1$ and $\pi_2$ are the natural projections from $X \times Y$. The object $\sP$ is called the kernel.

The right and left adjoints of $ \Phi_\sP $ are the transforms with respect to the kernels
$$ \sP^R := \sigma(\sP^\vee) \otimes \pi_2^* \omega_X [\dim(X)] \in D(Y \times X) \text{ and } \sP^L := \sigma(\sP^\vee) \otimes \pi_1^* \omega_Y [\dim(Y)] \in D(Y \times X),$$
where $ \sigma : D(X \times Y) \rightarrow D(Y \times X) $ denotes the equivalence coming from the canonical map $ \sigma : X \times Y \rightarrow Y \times X $.  If $\sP$ induces an equivalence $\Phi_{\sP}$ then its inverse is induced by $\sP^L \cong \sP^R$.

We can express composition of transforms in terms of their kernels. If $ X, Y, Z $ are varieties and $\Phi_\sP : D(X) \rightarrow D(Y),  \Phi_\sQ : D(Y) \rightarrow D(Z) $ are transforms, then $ \Phi_\sQ \circ \Phi_\sP $ is the transform with respect to the kernel
\begin{equation*}
\sQ * \sP := {\pi_{13}}_*(\pi^*_{12}(\sP) \otimes \pi^*_{23}(\sQ)).
\end{equation*}
The operation $*$ is associative. Moreover by \cite{H} remark 5.11, we have $ (\sQ * \sP)^R \cong \sP^R * \sQ^R $ and $ (\sQ * \sP)^L \cong \sP^L * \sQ^L$.

\subsection{The varieties} \label{sec:varieties}
Fix an integer $ m \ge 2 $. We now describe the basic varieties upon which our work is based. These varieties are indexed by sequences $\uk = (k_1, \dots, k_n) $ where each $ 1 \le k_i \le m-1 $. Each such $ k_i $ corresponds to a fundamental weight $ \omega_{k_i} $ of $\sl_m$.

Fix a vector space $ \C^m $ and consider the vector space $ \C^m \otimes \C(z) $ and let $ L_0 = \C^m \otimes \C[z]$.  We will study $\C[z]$-submodules $L $ of $ \C^m \otimes \C(z) $ which contain $L_0 $, and such that $ L/L_0 $ is finite-dimensional.  Note that if $ L $ is such a subspace, then $ z|_{L/L_0} $ is a linear operator on a finite-dimensional vector space.

For $\uw = (w_1, \dots, w_n)$ with $w_i \in \C$ we define
\begin{align*}
Y(\uk)_{\uw} := &\{ (L_1, \dots, L_n) : L_i \subset \C^m \otimes \C(z), \ L_0 \subset L_1 \subset L_2 \subset \dots \subset L_n, \\
&\dim(L_i / L_{i-1}) = k_i, \text{ and } (z - w_i I) L_i \subset L_{i-1} \}.
\end{align*}
Notice that if we vary the $w_i$ we obtain a family $ Y(\uk)_{\C^n} \rightarrow \C^n$ whose fibre at $\uw \in \C^n$ is $Y(\uk)_{\uw}$.

There is a natural map $Y(\uk)_{\uw} \rightarrow Y(\uk')_{\uw'}$ where $\uk'$ and $\uw'$ are obtained from $\uk$ and $\uw$ by forgetting $k_n$ and $w_n$. It is given by forgetting $L_n$. This map is a fibre bundle with fibre $\G(k_n,m)$. To see this, suppose that we have $ (L_1, \dots, L_{n-1}) \in Y(\uk')_{\uw'}$ and are considering possible choices of $L_n$. By definition we must have $L_{n-1} \subset L_n \subset (z - w_n I)^{-1}(L_{n-1})$. Since $ (z-w_n I)^{-1}(L_{n-1}) / L_{n-1} $ is always $m$ dimensional, this fibre is a $\G(k_n, m)$. Thus $Y(\uk)_\uw$ is an iterated fibre bundle.

The varieties $ Y(\uk)_\uw $ carry tautological quotient vector bundles $ L_i/L_j $ for any $ j < i $. We will use the usual convention that on $Y(\uk)_\uw \times Y(\uk')_{\uw'}$ we denote by $L_i/L_j$ the pullback of the bundle from the first factor and $L_i'/L_j'$ the pullback from the second.

\subsection{The subvarieties}\label{sec:subvar}

If $w_i = w_{i+1}$ and $k_i+k_{i+1}=m$, we define
$$d_i \cdot (k_1, \dots, k_n) := (k_1, \dots, k_{i-1}, k_{i+2}, \dots, k_n), \quad d_i \cdot (w_1, \dots, w_n) = (w_1, \dots w_{i-1}, w_{i+2}, \dots, w_n) $$
and we define$X(\uk)_{\uw}^i \subset Y(\uk)_{\uw}$ as follows
$$X(\uk)^i_{\uw} := \{ (L_1, \dots, L_n) \in Y(\uk)_{\uw}: (z-w_iI) L_{i+1} = L_{i-1} \}.$$
These varieties are the fibres of a family $ X(\uk)^i_{\C^{n-1}} $ defined over the base $ \C^{n-1} = \{ (w_1, \dots, w_n) : w_i = w_{i+1} \} $. There exists a natural projection $q: X(\uk)^i_{\uw} \rightarrow Y(d_i \cdot \uk)_{d_i \cdot \uw} $ given by forgetting $L_i$. Notice that since $\dim(L_{i+1}/L_{i-1}) = m$, the map $q$ is a $\G(k_i,m)$-fibre bundle.

\begin{Remark}
Note that the varieties $X(\uk)_\uw^i$ are not defined if $w_i \ne w_{i+1}$. In fact, for the purpose of defining the categorical $ \sl_n$-action from \cite{CKL1}, it is very important that $X(\uk)^i_{\uw}$ do not deform over the locus where $w_i \ne w_{i+1}$.
\end{Remark}

Now, for arbitrary $\uw$ and $ \uk $, we define
\begin{equation*}
s_i \cdot (k_1, \dots, k_n) := (k_1, \dots, k_{i-1}, k_{i+1}, k_i, k_{i+2}, \dots, k_n)
\end{equation*}
and similarly, $ s_i \cdot \uw $.  Then we define the subvariety
\begin{equation*}
 Z(\uk)^i_{\uw} := \{ (L_\cdot, L'_\cdot) : L_j = L_j' \text{ for $ j \ne i $ } \} \subset Y(\uk)_\uw \times Y(s_i \cdot \uk)_{s_i \cdot \uw}.
\end{equation*}
As before, the varieties $ Z(\uk)^i_\uw $ are the fibres of a family $ Z(\uk)^i_{\C^n} \rightarrow \C^n $.   Note that $ Z(\uk)^i_{\C^n} \subset Y(\uk)_{\C^n} \times_{\C^n}  Y(s_i \cdot \uk)_{\C^n} $ where in forming the fibre product we twist the second map by $ s_i $.

\begin{Lemma}\label{lem:graph}
If $w_i \ne w_{i+1}$ then $ Z(\uk)^i_\uw$ is the graph of an isomorphism.
\end{Lemma}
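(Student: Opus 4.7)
The plan is to show that when $w_i \neq w_{i+1}$, both $L_i$ and $L_i'$ are uniquely determined (as algebraically varying functions) by the remaining data $L_{i-1} = L_{i-1}'$ and $L_{i+1} = L_{i+1}'$, so that projection to either factor gives an isomorphism and $Z(\uk)^i_\uw$ is its graph.

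The key is to localize the question on the finite-dimensional quotient $V := L_{i+1}/L_{i-1}$, which has dimension $k_i + k_{i+1}$ and carries the action of $z$. From the chain conditions $(z - w_i I) L_i \subset L_{i-1}$ and $(z - w_{i+1} I) L_{i+1} \subset L_i$, one checks immediately that $z$ acts on $V$ with $(z - w_i)(z - w_{i+1}) = 0$. So my first step is to note that when $w_i \neq w_{i+1}$, the Chinese Remainder Theorem gives a canonical decomposition $V = V_i \oplus V_{i+1}$ into the generalized eigenspaces $V_j := \ker(z - w_j I)|_V$.

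Next I would translate the conditions on $L_i$ into this picture. The condition $(z - w_i I) L_i \subset L_{i-1}$ says $L_i/L_{i-1} \subset V_i$, while $(z - w_{i+1} I)L_{i+1} \subset L_i$ says that $L_i/L_{i-1}$ contains the image of $(z - w_{i+1} I)|_V$. Since $(z - w_{i+1} I)$ acts as the nonzero scalar $w_i - w_{i+1}$ on $V_i$ and as zero on $V_{i+1}$, this image is precisely $V_i$. Hence $V_i \subset L_i/L_{i-1} \subset V_i$, forcing $L_i/L_{i-1} = V_i$; comparing dimensions also forces $\dim V_i = k_i$ and $\dim V_{i+1} = k_{i+1}$. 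The identical argument applied to $L_i'$ (with the roles of $w_i,w_{i+1}$ and $k_i,k_{i+1}$ swapped according to $s_i$) gives $L_i'/L_{i-1} = V_{i+1}$.

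The remaining step is to upgrade this pointwise uniqueness to a morphism of varieties. Since the eigenspace decomposition $V = V_i \oplus V_{i+1}$ is realized by an algebraic projector $p(z)$ (e.g.\ $p(z) = (z - w_{i+1})/(w_i - w_{i+1})$ acts as the identity on $V_i$ and as zero on $V_{i+1}$), the assignment $(L_\cdot) \mapsto (L'_\cdot)$ obtained by replacing $L_i$ with the preimage of $V_{i+1}$ under $L_{i+1} \twoheadrightarrow V$ is regular, and symmetrically for its inverse. One then verifies routinely that $(L_\cdot')$ satisfies the defining conditions of $Y(s_i \cdot \uk)_{s_i \cdot \uw}$, which is immediate from the two eigenspace descriptions. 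I expect the only mild obstacle is making sure the dimension bookkeeping is consistent (forcing both $\dim V_i = k_i$ and $\dim V_{i+1} = k_{i+1}$ out of the inclusions), but this falls out cleanly from the CRT splitting once $w_i \neq w_{i+1}$ is used.
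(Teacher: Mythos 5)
Your proof is correct and follows essentially the same strategy as the paper: decompose $L_{i+1}/L_{i-1}$ into eigenspaces of $z$ (which is semisimple since $(z-w_i)(z-w_{i+1})=0$ and $w_i\neq w_{i+1}$), observe that $L_i/L_{i-1}$ and $L_i'/L_{i-1}$ are forced to be the $w_i$- and $w_{i+1}$-eigenspaces respectively, and conclude that both projections from $Z(\uk)^i_\uw$ are bijective. Your extra remark that the eigenprojector is a polynomial in $z$, making the induced inverse map a regular morphism, is a helpful bit of rigor that the paper leaves implicit when it passes from ``fibre has one point'' to ``is an isomorphism.''
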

\begin{proof}
Consider a point $ L_\cdot \in Y(\uk)_\uw$ and let $ (L_i, L'_i) \in Z(\uk)^i_\uw $ be a point in the fibre of $ Z(\uk)^i_{\uw} \rightarrow Y(\uk)_\uw $ over $ L_i$.  The linear operator $z : L_{i+1}/L_{i-1} \rightarrow L_{i+1}/L_{i-1}$ is diagonalizable with two eigenvalues $w_i$ and $w_{i+1}$.  So the only choice for $L'_i \in Z(\uk)^i_{\uw}$ is to take $L'_i/L_{i-1} $ to be the $w_{i+1}$-eigenspace of $z|_{L_{i+1}/L_{i-1}}$.  Thus the fibre has one point and so the projection $ Z(\uk)^i_{\uw} \rightarrow Y(\uk)_\uw$ is an isomorphism.  Similarly, $ Z(\uk)^i_{\uw} \rightarrow Y(s_i \cdot \uk)_{s_i \cdot \uw}$ is an isomorphism.
\end{proof}

\subsection{$\C^\times$ action}\label{sec:c*actions}

Consider the action of $\C^\times$ on $\C(z)$ given by $t \cdot z = t^2 z$.  This induces an action of $ \C^\times $ on $ \C(z) \otimes \C^m $. Thus, given a subspace $ L \subset \C(z) \otimes \C^m $, we can consider $ t L $.  Note that if $ (z-wI)L \subset L' $, then $ (z-t^{-2} wI) tL \subset tL' $. Hence, the $\C^\times$ action on $ \C(z)$ induces a $\C^\times$ action on $Y(\uk)_{\C^n}$ by
$$t \cdot (L_1, \dots, L_{n}) = (t \cdot L_1, \dots, t \cdot L_{n}).$$
This action is compatible with the scaling action (with exponent -2) of $ \C^\times $ on $\C^n$. In particular, this $\C^\times$ does not act on $Y(\uk)_\uw$ for general $\uw$ but it does act on the central fibre $Y(\uk)_{(0,\dots,0)}$.

We denote by $\O_Y\{p\}$ the structure sheaf of $Y$ with non-trivial $\C^\times$ action of weight $p$. This means that on $Y(\uk)_{(0,\dots,0)}$ we have natural maps of vector bundles $z: L_i \rightarrow L_{i-1} \{2\}$.

\subsection{The Beilinson-Drinfeld Grassmannian}
The family $Y(\uk)_{\C^n} $ admits a natural interpretation using the Beilinson-Drinfeld Grasssmannian \cite{MV}.

We can describe $Y(\uk)_\uw$ using the Beilinson-Drinfeld Grassmannian of $PGL_m$ as follows.  First if $ V, V' $ are rank $m $ locally-free sheaves on $ \mathbb P^1 $, then we say that $ \phi : V \rightarrow V' $ is a Hecke modification of type $ \omega_k $ at $ w \in \mathbb P^1$, if $ \phi $ is injective and $ coker(\phi) $ is isomorphic to $ \O_w^{\oplus k} $. The following result is well-known to experts.

\begin{Proposition}
We have an isomorphism
\begin{equation}\label{eq:iso}
\begin{aligned}
Y(\uk)_{\uw} \cong \{ &(V_0, \dots, V_n), (\phi_1, \dots, \phi_n) :  \\
&\phi_i : V_{i-1} \rightarrow V_i \text{ is a Hecke modification of type $ \omega_{k_i} $ at $ w_i $ and $ V_0 = \O^m$} \}.
\end{aligned}
\end{equation}
\end{Proposition}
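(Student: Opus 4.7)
The isomorphism \eqref{eq:iso} is an instance of the standard dictionary between $\C[z]$-lattices $L_0 \subset L \subset \C^m \otimes \C(z)$ (with $L/L_0$ finite dimensional) and vector bundles on $\mathbb{P}^1$ equipped with a trivialization near infinity. I would realize this concretely by gluing on the open cover $\{\mathbb{A}^1_z,\, U\}$ of $\mathbb{P}^1$, where $\mathbb{A}^1_z = \mathrm{Spec}\,\C[z]$ and $U := \mathbb{P}^1 \setminus \{w_1,\dots,w_n\}$.

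Given $(L_\cdot) \in Y(\uk)_\uw$, each $L_i$ is a free $\C[z]$-module of rank $m$, being a finitely generated torsion-free module over the PID $\C[z]$ with the same generic rank as $L_0$. I would construct $V_i$ by gluing the locally free sheaf $\widetilde{L_i}$ on $\mathbb{A}^1_z$ to the trivial bundle $\C^m \otimes \O$ on $U$ via the inclusion $L_0 \hookrightarrow L_i$. This gluing is well posed because the defining relation $(z - w_jI)L_j \subset L_{j-1}$ implies by induction on $i$ that $L_i/L_0$ is set-theoretically supported on $\{w_1,\dots,w_i\}$, so the natural map $L_0[\{(z-w_j)^{-1}\}] \to L_i[\{(z-w_j)^{-1}\}]$ is an isomorphism on the overlap $\mathbb{A}^1 \setminus \{w_1,\dots,w_n\}$. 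The resulting sheaf $V_i$ is locally free of rank $m$, and $V_0 = \O^m$ by construction.

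The inclusions $L_{i-1} \subset L_i$, paired with the identity on $U$, induce morphisms $\phi_i : V_{i-1} \to V_i$ that are isomorphisms away from $w_i$. Locally at $w_i$, the cokernel is $L_i / L_{i-1}$, which is $k_i$-dimensional over $\C$ and annihilated by $z - w_i$ (directly by the defining condition), and is therefore isomorphic to $\O_{w_i}^{\oplus k_i}$. Hence each $\phi_i$ is a Hecke modification of type $\omega_{k_i}$ at $w_i$.

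For the inverse construction, I would send $(V_\cdot,\phi_\cdot)$ to $L_i := \Gamma(\mathbb{A}^1_z, V_i)$, with $L_i$ embedded in $\C^m \otimes \C(z)$ via the identification of the generic fibre of each $V_i$ with $\C^m \otimes \C(z)$ coming from $V_0 = \O^m$ and from the injectivity of $\phi_i \circ \cdots \circ \phi_1$. A routine unwinding shows that the two assignments are mutually inverse and depend algebraically on $\uw$, giving the stated isomorphism of families. The one thing to check beyond formal manipulations is the local freeness of $V_i$ at the modification points; this is where I expect the only real work, and it follows from the freeness of $L_i$ over $\C[z]$ together with the fact that $V_i$ equals the trivial bundle on $U$ via the chosen identification.
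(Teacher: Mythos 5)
Your argument is correct and amounts to the same standard lattice--bundle dictionary the paper invokes: your inverse assignment $L_i = \Gamma(\mathbb{A}^1_z, V_i)$, embedded via the generic-fibre identification, is exactly the paper's definition of $L_i$ as the rational sections of $V_0$ that become regular sections of $V_i$, and you simply make explicit the gluing construction for the forward direction (together with the checks on local freeness, support of $L_i/L_0$, and the cokernel of $\phi_i$) that the paper's sketch leaves implicit.
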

\begin{proof}
We will just give a sketch of the proof since this result is only used for motivation.

Let $ (V_0, \dots, V_n), (\phi_1, \dots, \phi_n) $ be a point on the right hand side.  Let $ L_i \subset \C(z) \otimes \C^m $ be the set of rational sections $ s $ of $ V_0 $ such that $ \phi_i \circ \dots \circ \phi_1(s) $ is a regular section of $ V_i $.  This defines a map from the right hand side to the left hand which is easily seen to be invertible.
\end{proof}

\begin{Remark}
The right hand side of the isomorphism in (\ref{eq:iso}) is a subvariety of the Beilinson-Drinfeld Grassmannian of $ \mathbb P^1 $. From this perspective the $\C^\times$ action discussed above comes from the action of $\C^\times$ on the curve $ \mathbb P^1 $.
\end{Remark}

\section{Kernels from tangles} \label{se:funtangle}

Consider the category $\Tang$ of oriented, coloured tangles labeled by $1,\dots,m-1$. More precisely, an object in $\Tang$ is a pair $(\uk,\uw)$ where $\uk=(k_1, \dots, k_n)$ with $k_i \in \{1, \dots, (m-1)\}$ and $\uw = (w_1, \dots, w_n) $ with $ w_i \in \k $.

An oriented tangle is a smooth embedding of $(n_1+n_2)/2$ many oriented arcs and finitely many circles into $ \mathbb{R}^2 \times [0,1] $, such that the boundary points of the arcs map bijectively onto the $n_1+n_2$ points $ (1,0,0), \dots, (n_1,0,0)$ and $(1,0,1), \dots, (n_2,0,1) $. An oriented, coloured, labeled tangle is an oriented tangle such that each component carries a label $ k \in \{1, \dots, m-1\} $ and a colour $w \in \k$.

Given an oriented, coloured, labeled tangle, we can read off the labels and colours on its top and bottom endpoints, thus giving two objects in $\Tang$.  The orientation of the tangle affects the labelling of the endpoints. If a strand labeled by $k$ is oriented upward, then the endpoint is labeled $k$; while if it is oriented downward, then it is labeled $m-k$.

Isotopy of tangles is defined in the usual way, except that strands carrying different colours $w_i \ne w_j$ are allowed to pass through each other.

The set of morphisms $\Hom((\uk_1,\uw_1),(\uk_2,\uw_2))$ in $\Tang$ consists of the isotopy classes of all tangles with these endpoints. Composition is given by concatenating the tangles.

\subsection{Generators and relations}

The morphisms in $\Tang$ are generated by the four crossings illustrated in figure \ref{fig:1}, their inverses, and by cups and caps. The strands in figure \ref{fig:1} are also allowed to carry arbitrary colours (which we omit for convenience).  We will give these generators the following names: $ t_1(\uk)^i_\uw, t_2(\uk)^i_\uw, t_3(\uk)^i_\uw, t_4(\uk)^i_\uw, c_1(\uk)^i_\uw, c_2(\uk)^i_\uw $ and we observe the following conventions. For the crossings and cap ($c_1$), the labels $ \uk $ and colours $ \uw $ refer to the labels and colours on the bottom endpoints, whereas for the cup ($c_2$), the labels and colours refer to those on the top endpoints.  We are abusing notation in (at least) two respects.   First, there really should be another cap, obtained by reversing orientation and labelling --- however, since our functor will end up assigning the same value to this other generators, we will ignore it (similar for cup).  Second, in our notation we are ignoring orientations on the other strands.

\begin{figure}[!ht]
\centering
\begin{tikzpicture}[>=stealth]
\draw [->](0,0) -- (1,1) [very thick];
\draw [->](0.4,0.6) -- (0,1) [very thick];
\draw [-](1,0) -- (0.6,0.4) [very thick];
\draw [shift={+(0,-0.2)}](0,0) node {$k_i$};
\draw [shift={+(0,-0.2)}](1,0) node {$k_{i+1}$};
\draw (0.5,-1) node {$t_1(\uk)^i_\uw$};

\draw [shift={+(2,0)}][->](0,0) -- (1,1) [very thick];
\draw [shift={+(2,0)}][-](0.4,0.6) -- (0,1) [very thick];
\draw [shift={+(2,0)}][<-](1,0) -- (0.6,0.4) [very thick];
\draw [shift={+(2,0)}][shift={+(0,-0.2)}](0,0) node {$k_i$};
\draw [shift={+(2,0)}][shift={+(0,-0.2)}](1,0) node {$k_{i+1}$};
\draw [shift={+(2,0)}](0.5,-1) node {$t_2(\uk)^i_\uw$};

\draw [shift={+(4,0)}][<-](0,0) -- (1,1) [very thick];
\draw [shift={+(4,0)}][->](0.4,0.6) -- (0,1) [very thick];
\draw [shift={+(4,0)}][-](1,0) -- (0.6,0.4) [very thick];
\draw [shift={+(4,0)}][shift={+(0,-0.2)}](0,0) node {$k_i$};
\draw [shift={+(4,0)}][shift={+(0,-0.2)}](1,0) node {$k_{i+1}$};
\draw [shift={+(4,0)}](0.5,-1) node {$t_3(\uk)^i_\uw$};

\draw [shift={+(6,0)}][<-](0,0) -- (1,1) [very thick];
\draw [shift={+(6,0)}][-](0.4,0.6) -- (0,1) [very thick];
\draw [shift={+(6,0)}][<-](1,0) -- (0.6,0.4) [very thick];
\draw [shift={+(6,0)}][shift={+(0,-0.2)}](0,0) node {$k_i$};
\draw [shift={+(6,0)}][shift={+(0,-0.2)}](1,0) node {$k_{i+1}$};
\draw [shift={+(6,0)}](0.5,-1) node {$t_4(\uk)^i_\uw$};

\draw [shift={+(8,.5)}](0,0) -- (0,-.5) [very thick];
\draw [shift={+(8,.5)}](1,0) -- (1,-.5)[-] [very thick];
\draw [shift={+(8,.5)}](0,0) arc (180:0:.5) [very thick];
\draw [shift={+(8,0)}][shift={+(0,-0.2)}](0,0) node {$k_i$};
\draw [shift={+(8,0)}][shift={+(0,-0.2)}](1,0) node {$m-k_i$};
\draw [shift={+(8,0)}](0.5,-1) node {$c_1(\uk)^i_\uw$};

\draw [shift={+(10,.2)}](0,0) -- (0,.5) [-][very thick];
\draw [shift={+(10,.2)}](1,0) -- (1,.5) [very thick];
\draw [shift={+(10,.2)}](0,0) arc (180:360:.5) [very thick];
\draw [shift={+(10,0)}][shift={+(0,.9)}](0,0) node {$k_i$};
\draw [shift={+(10,0)}][shift={+(0,.9)}](1,0) node {$m-k_i$};
\draw [shift={+(10,0)}](0.5,-1) node {$c_2(\uk)^i_\uw$};

\end{tikzpicture}
\caption{The cap and cup can have either orientation.}\label{fig:1}
\end{figure}
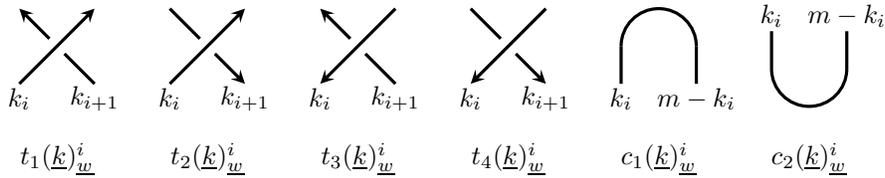

\begin{figure}[!ht]
\centering
\begin{tikzpicture}[>=stealth]
\draw [-](0,0) -- (.7,.7) [very thick];
\draw [->](0.4,0.6) -- (0,1) [very thick];
\draw [-](1,0) -- (0.6,0.4) [very thick];
\draw (0.7,0.7) arc (125:65:.5) [very thick];
\draw [->](1.2,0.75) -- (2,0) [very thick];
\draw [shift={+(0,-0.2)}](0,0) node {$k_i$};
\draw [shift={+(0,-0.2)}](1,0) node {$k_{i+1}$};
\draw [shift={+(0,-0.2)}](2,0) node {$m-k_i$};

\draw [shift={+(0,0)}](3,.5) node {$=$};

\draw [shift={+(4,0)}][-](0,0) -- (.7,.7) [very thick];
\draw [shift={+(4,0)}][-](1,0) -- (1.4,0.4) [very thick];
\draw [shift={+(4,0)}][->](1.6,0.6) -- (2,1) [very thick];
\draw [shift={+(4,0)}](0.7,0.7) arc (125:65:.5) [very thick];
\draw [shift={+(4,0)}][->](1.2,0.75) -- (2,0) [very thick];
\draw [shift={+(4,0)}][shift={+(0,-0.2)}](0,0) node {$k_i$};
\draw [shift={+(4,0)}][shift={+(0,-0.2)}](1,0) node {$k_{i+1}$};
\draw [shift={+(4,0)}][shift={+(0,-0.2)}](2,0) node {$m-k_i$};
\end{tikzpicture}
\caption{One of the fork relations.}\label{fig:2}
\end{figure}
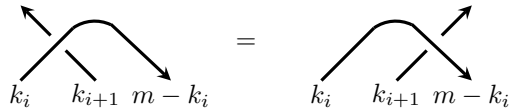

\begin{Lemma}\label{lem:relations}
Two tangle diagrams represent isotopic tangles if and only if one can be obtained from the other by applying a finite number of the following operations:
\begin{itemize}
\item a Reidemeister move of type 0,I,II or III,
\item a pitchfork move, one of which is illustrated in Figure \ref{fig:2},
\item an isotopy exchanging the order with respect to height of two caps, cups, or crossings,
\item passing two strands with different colours through each other.
\end{itemize}

More precisely, we have the following relations and those obtained from these by reflection, etc.  We have generally suppressed the colours $\uw $ and the labels $ \uk $ from the notation to make it simpler.
\begin{itemize}
\item Reidemeister (0) : $ c_1^i \circ c_2^{i+1} = id = c_1^{i+1} \circ c_2^i$,
\item Reidemeister (I) : $c_1^i \circ t_2^{i} = c_1^i$ and $c_1^i \circ t_3^i = c_1^i$,
\item Reidemeister (II) : $ t_1^i, t_2^i, t_3^i, t_4^i $ are invertible,
\item Reidemeister (III) : $ t_{l_1}^i \circ t_{l_2}^{i+1} \circ t^i_{l_3} = t_{l_3}^{i+1} \circ t_{l_2}^i \circ t_{l_1}^{i+1}$,
\item changing height isotopies, such as : $ c_1^i \circ c_1^{i+k} = c_1^{i+k-2} \circ c_1^i $ for $ k \ge 1$,
\item pitchfork move : $ c_1^{i+1} \circ t_1^i = c_1^i \circ (t_2^{i+1})^{-1}$,
\item passing two strands with different colours through each other: $ t_1(s_i \cdot \uk)^i_{s_i \cdot \uw} \circ t_1(\uk)^i_\uw = id $ if $ w_i \ne w_{i+1} $.
\end{itemize}
\end{Lemma}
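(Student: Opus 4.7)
The plan is to reduce this statement to the classical presentation of the oriented tangle category and then account for the extra relation coming from coloured isotopy.

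The forward direction---that each listed move realises an isotopy of coloured tangles---is immediate, since each Reidemeister, pitchfork, height-change, and colour-passing move is visibly given by a smooth isotopy of embedded arcs (the last one using the fact that strands of distinct colours are permitted to pass through each other by definition).

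For the converse I would first invoke the classical oriented-tangle presentation theorem due to Reidemeister, Turaev, and Freyd--Yetter (see e.g.\ Kassel's \emph{Quantum Groups} or Turaev's \emph{Quantum Invariants of Knots and 3-Manifolds}), which states that the category of oriented tangles is presented by the generators $t_1(\uk)^i_\uw, t_2(\uk)^i_\uw, t_3(\uk)^i_\uw, t_4(\uk)^i_\uw, c_1(\uk)^i_\uw, c_2(\uk)^i_\uw$ modulo the Reidemeister 0/I/II/III moves, the pitchfork moves, and the height-change isotopies. The proof puts any tangle diagram in generic Morse position with respect to the height function so that each horizontal slice contains at most one cap, cup, or crossing; a generic smooth isotopy between two such diagrams then has only isolated codimension-one degeneracies in the height--time parameter, and a Cerf-theoretic analysis classifies these degeneracies as precisely the listed moves.

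To pass from the uncoloured to the coloured case, one notes that by the very definition of coloured isotopy the only additional freedom is the ability to slide two strands of different colours through each other. Any coloured isotopy therefore decomposes into a finite alternating sequence of (i) ordinary oriented isotopies, already handled by the classical presentation, and (ii) local colour-swap events at isolated times, where two strands of different colours cross at a point; the latter correspond exactly to the new relation $t_1(s_i \cdot \uk)^i_{s_i \cdot \uw} \circ t_1(\uk)^i_\uw = \mathrm{id}$ whenever $w_i \ne w_{i+1}$. The main obstacle is the underlying Cerf-theoretic argument for the classical result, but since that argument is standard I would simply cite it and restrict the novel content to bookkeeping the single additional colour-passing relation.
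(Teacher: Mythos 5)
The paper does not prove this lemma; it is stated without proof and implicitly treated as a standard consequence of the classical presentation of the oriented tangle category. Your proposal therefore does not so much take a different route as actually supply an argument where the paper supplies none, and the argument you give is the right one: cite the Turaev/Freyd--Yetter/Shum presentation of oriented tangles by generators (crossings, cups, caps) and relations (Reidemeister 0--III, pitchfork/slide moves, height exchanges), which is established by putting diagrams in generic Morse position and running a Cerf-theoretic analysis of a generic path of diagrams, and then observe that the only new codimension-one degeneracy permitted by the coloured isotopy relation (strands of distinct colours may pass through one another) is an isolated tangency between two differently-coloured strands, which is exactly encoded by $t_1(s_i\cdot\uk)^i_{s_i\cdot\uw}\circ t_1(\uk)^i_\uw = \id$ when $w_i\ne w_{i+1}$. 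One small point worth making explicit: because the coloured equivalence is not an isotopy through embeddings but through maps allowed to have transverse double points between differently-coloured strands, you should note that a generic such path still has only finitely many such events and that, away from them, the Cerf analysis of the embedded case applies verbatim; you gesture at this but it is the one step that actually needs the genericity argument rather than a citation. The remaining bookkeeping --- labels by $\{1,\dots,m-1\}$ and the orientation convention relabelling downward strands $k\mapsto m-k$ --- just rides along through the classical argument and introduces no further relations, as you say.
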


\subsection{From tangles to kernels}\label{sse:functor}

We now define a functor $\Psi : \Tang \rightarrow \K$ from the category of oriented, labeled, coloured tangles to the category $\K$ whose objects are varieties and whose morphisms are isomorphism classes of kernels.

At the level of objects we map $(\uk,\uw)$ to $Y(\uk)_\uw$. At the level of morphisms we have to explain where all the generators are mapped.

\subsubsection{Caps and cups}
We begin with caps and cups.  Let $ \uk, \uw $ be such that $ k_i + k_{i+1} = m $ and $ w_i = w_{i+1} $.  We define
\begin{align*}
\Psi(c_1(\uk)^i_\uw) &= \sC_1(\uk)^i_\uw := \O_{X(\uk)^i_\uw}  \\
\Psi(c_2(\uk)^i_\uw) &= \sC_2(\uk)^i_\uw := \sigma(\O_{X(\uk)^i_\uw} \otimes \det(L_i/L_{i-1})^{m-k_i} \otimes \det(L_{i+1}/L_i)^{-k_i})
\end{align*}
Here $ X(\uk)^i_\uw $ is viewed as a subvariety of $ Y(\uk)_\uw \times Y(d_i \cdot \uk)_{d_i \cdot \uw} $, so that $ \sC_1(\uk)^i_\uw $ is a kernel on $ Y(\uk)_\uw \times Y(d_i \cdot \uk)_{d_i \cdot \uw} $ whereas $ \sC_2(\uk)^i_\uw $ is a kernel on $ Y(d_i \cdot \uk)_{d_i \cdot \uw} \times Y(\uk)_\uw $.

\subsubsection{Crossings with different labels}
To describe where to take the crossings we have two cases, depending on whether or not $w_i$ and $w_{i+1}$ are the same. If $w_i \ne w_{i+1}$ then we define
\begin{equation}\label{eq:crossing1}
\Psi(t_l(\uk)^i_\uw) = \sT_l(\uk)^i_\uw := \O_{Z(\uk)^i_\uw} \otimes \rho
\end{equation}
for $ l = 1, 2,3,4$. Here $\rho$ is the line bundle $\det(L_{i+1}/L_i)^{k_i} \otimes \det(L_{i+1}'/L_i')^{-k_{i+1}}$ on $Y(\uk)_\uw \times Y(s_i \cdot \uk)_{s_i \cdot \uw}$. In this case $Z(\uk)^i_\uw$ is the graph of an isomorphism so these kernels are invertible. So we can map the four reversed crossings to the corresponding inverses.

\subsubsection{Crossings with the same labels}
If $w_i = w_{i+1}$ then the maps are a little more difficult to describe. Without loss of generality (and to simplify notation) let us assume $w_i=w_{i+1}=0$ and that $k_i \le k_{i+1}$. We let $N := \min(k_i+k_{i+1},m)$. Then $Z(\uk)^i_\uw$ consists of $N-k_{i+1}+1$ components denoted
\begin{equation}\label{eq:components}
Z_s(\uk)^i_\uw = \{(L_\cdot,L'_\cdot) \in Z(\uk)^i_\uw: \dim (\ker z|_{L_{i+1}/L_{i-1}}) \ge k_{i+1}+s \text{ and } \dim ((L_i \cap L_i')/L_{i-1}) \ge k_i-s \}
\end{equation}
where $s = 0, \dots, N-k_{i+1}$. Since $\spn(L_i,L'_i)/L_{i-1} \subset \ker z|_{L_{i+1}/L_{i-1}}$ it follows that
$$\dim (\ker z|_{L_{i+1}/L_{i-1}}) + \dim ((L_i \cap L_i')/L_{i-1}) \ge k_i+k_{i+1}$$
in $Z(\uk)^i_\uw$. We define the open subscheme
\begin{equation}\label{eq:open}
Z^o(\uk)^i_\uw := \{(L_\cdot,L'_\cdot) \in Z(\uk)^i_\uw: \dim (\ker z|_{L_{i+1}/L_{i-1}}) + \dim ((L_i \cap L_i')/L_{i-1}) \le k_i+k_{i+1}+1 \}.
\end{equation}
and $Z^o_s(\uk)^i_\uw := Z_s(\uk)^i_\uw \cap Z^o(\uk)^i_\uw$. We denote by $j$ the open embedding of $Z^o(\uk)^i_\uw$ into $Z(\uk)^i_\uw$. The intersections $D_{s,+}^o := Z^o_s(\uk)^i_\uw \cap Z_{s+1}(\uk)^i_\uw$ and $D_{s,-}^o := Z^o_s(\uk)^i_\uw \cap Z_{s-1}(\uk)^i_\uw$ are divisors in $Z^o_s(\uk)^i_\uw$.

\begin{Lemma}\label{lem:lb}
There exists a line bundle $\sL(\uk)^i_\uw$ on $Z^o(\uk)^i_\uw$ uniquely determined by the property that when restricted to each component $Z^o_s(\uk)^i_\uw$ it is isomorphic to
$$\O_{Z^o_s(\uk)^i_\uw}([D^o_{s,+}]) \otimes \det(L_{i+1}/L_i)^{-s} \otimes \det(L_i'/L_{i-1})^s \otimes \rho$$
where, as before, $\rho = \det(L_{i+1}/L_i)^{k_i} \otimes \det(L_{i+1}'/L_i')^{-k_{i+1}}$.
\end{Lemma}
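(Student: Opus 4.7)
My plan is to construct $\sL(\uk)^i_\uw$ by gluing the specified line bundles along pairwise overlap divisors, and derive uniqueness from a rigidity argument. The key geometric observation is that the open condition (\ref{eq:open}) is chosen precisely so that at every point of $Z^o(\uk)^i_\uw$, the quantity $\dim\ker z|_{L_{i+1}/L_{i-1}} + \dim((L_i \cap L'_i)/L_{i-1})$ takes only two consecutive values, so that components $Z^o_s$ and $Z^o_{s'}$ with $|s-s'| \ge 2$ are disjoint in $Z^o$. Thus the only nonempty pairwise intersections are the divisors $D^o_{s,+} = D^o_{s+1,-}$, and no triple intersections occur. By the standard gluing principle for line bundles on a reducible scheme, specifying $\sL$ is equivalent to specifying a line bundle on each $Z^o_s$ together with a gluing isomorphism on each $D^o_{s,+}$ (the cocycle condition on triple intersections being vacuous).

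Uniqueness then follows: if $\sL'$ and $\sL''$ both have the prescribed restrictions, fix local isomorphisms $\phi_s \colon \sL'|_{Z^o_s} \cong \sL''|_{Z^o_s}$. The discrepancy $\phi_{s+1} \circ \phi_s^{-1}$ on each $D^o_{s,+}$ is then a nowhere-zero regular function, and rescaling the $\phi_s$ successively absorbs these discrepancies to produce a global isomorphism $\sL' \cong \sL''$.

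For existence, write $M_s := \det(L_{i+1}/L_i)^{-s} \otimes \det(L'_i/L_{i-1})^s \otimes \rho$, so the specified restriction on $Z^o_s$ is $\O([D^o_{s,+}]) \otimes M_s$. Providing the gluing amounts to exhibiting a canonical isomorphism on $D^o_{s,+} = D^o_{s+1,-}$,
$$\O_{Z^o_s}([D^o_{s,+}])|_{D^o_{s,+}} \otimes \O_{Z^o_{s+1}}([D^o_{s+1,+}])^{-1}|_{D^o_{s+1,-}} \cong \det(L'_i/L_{i-1}) \otimes \det(L_{i+1}/L_i)^{-1}.$$
Each factor on the left is the normal bundle to a rank-jump stratum of $z|_{L_{i+1}/L_{i-1}}$ inside its ambient component. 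Using the standard description of the normal bundle to such a determinantal degeneracy locus as a Hom-bundle between tautological kernel and cokernel bundles, together with the explicit interaction of $L_i$ and $L'_i$ on the overlap coming from the eigenspace decomposition of $z|_{L_{i+1}/L_{i-1}}$, one computes each determinant in terms of the tautological line bundles and verifies the displayed identification.

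\textbf{The main obstacle} is this explicit normal-bundle computation together with the requirement that the identifications be canonical rather than merely abstract. The two divisors $D^o_{s,+}$ (viewed inside $Z^o_s$) and $D^o_{s+1,-}$ (viewed inside $Z^o_{s+1}$) are cut out by related but distinct rank conditions on $z|_{L_{i+1}/L_{i-1}}$, and the bookkeeping that matches their normal bundles on the common underlying subscheme---where the dimensions of $\ker z$, $L_i$, $L'_i$, and $L_i \cap L'_i$ must all be correlated precisely---is the technical heart of the argument. Once the Hom-bundle descriptions are in place, taking determinants and cancelling reduces the identification to a direct verification.
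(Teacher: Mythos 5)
Your route is genuinely different from the paper's, and it is essentially viable. The paper proves Lemma \ref{lem:lb} by citing Proposition \ref{prop:equivalence}: there the line bundle arises with no gluing at all, either as the convolution of the Rickard complex $\Theta_\bullet$ (following \cite{C1}), or, in the deformed picture of Theorem \ref{thm:kernel2}, as the restriction to the central fibre of the honest line bundle $\O_{Z^o(\uk)^i_{\C^n}}(\sum_s \binom{s+1}{2}[Z^o_s]) \otimes \rho$ on the \emph{irreducible} total space $Z^o(\uk)^i_{\C^n}$; the prescribed restriction to each component then falls out of the divisor calculus (the relation $\O(\sum_s [Z^o_s]) \cong \O\{2\}$ plus Corollary \ref{cor:intersections}), exactly as in Lemma \ref{lem:L}. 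Your direct gluing on the reducible central fibre is more elementary and self-contained, while the paper's construction buys the identification with the categorical $\sl_2$ equivalence (needed anyway for invertibility) and deformation-compatibility for free. The key computation you defer is precisely Lemma \ref{lem:intersections}/Corollary \ref{cor:intersections} of the appendix, and it does go through, so your "main obstacle" is real but surmountable.

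Two corrections/caveats. First, in your displayed gluing identity the factor $\O_{Z^o_{s+1}}([D^o_{s+1,+}])^{-1}|_{D^o_{s+1,-}}$ is \emph{not} a normal bundle: since $Z^o_s$ and $Z^o_{s+2}$ are disjoint in $Z^o$, the divisors $D^o_{s+1,+}$ and $D^o_{s+1,-}$ are disjoint in $Z^o_{s+1}$, so that factor is canonically trivial. Your task therefore reduces to the single identification $\O_{Z^o_s}([D^o_{s,+}])|_{D^o_{s,+}} \cong \det(L'_i/L_{i-1}) \otimes \det(L_{i+1}/L_i)^{-1}$, which follows from Corollary \ref{cor:intersections} after restricting to $D^o_{s,+}$ and using that $\O([D^o_{s,-}])$ is trivial there (keep track of the equivariant shift $\{2(k-s)\}$ if you want the $\C^\times$-equivariant statement). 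Second, both your gluing principle and your rescaling argument need justification that the paper's construction sidesteps: gluing line bundles from components plus pairwise data requires the union to be sufficiently nice (here the components meet pairwise transversally along the smooth divisors $D^o_{s,\pm}$, with no triple overlaps, so the Mayer--Vietoris sequence for units applies), and absorbing the discrepancy functions requires that invertible functions on $D^o_{s,+}$ extend to $Z^o_{s+1}$ --- which holds because these are open subsets, with complement of high codimension, of smooth proper varieties (Lemma \ref{lem:Z^o}), so their only units are constants. Neither point is fatal, but both should be stated.
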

\begin{proof}
This follows from Proposition \ref{prop:equivalence}.
\end{proof}

We now define map the four crossings in Figure \ref{fig:1} to
\begin{equation}\label{eq:crossing2}
\Psi(t_l(\uk)^i_\uw) = \sT_l(\uk)^i_\uw := j_* \sL(\uk)^i_\uw  \otimes \rho
\end{equation}
for $ l = 1, 2, 3, 4$.  Proposition \ref{prop:equivalence} implies that these kernels are invertible. So we map the four reverse crossings to the corresponding inverses.

\begin{Remark}
This definition of a crossing when $w_i=w_{i+1}$ is necessarily more complicated because, as noted by Namikawa in \cite{N} (in the closely related context of cotangent bundles to Grassmannians), the sheaf $\O_{Z(\uk)^i_{\uw}}$ does {\em not} induce an equivalence of categories if $m=4, k_i=k_{i+1}=2$ and $w_i=w_{i+1}$. This means that Reidemeister II cannot hold. More generally one expects that $\O_{Z(\uk)^i_\uw}$ fails to induce an equivalence if $w_i = w_{i+1}$ and $k_i,k_{i+1} \notin \{1,m-1\}$.
\end{Remark}

Finally, to make $\Psi: \Tang \rightarrow \K$ well defined with respect to Reidemeister I we shift the image of a tangle $T$ in $\K$ by $[\sum_k d_k k(m-k)]$ where $d_k$ is the number of positive minus the number of negative crossings in $T$ involving two strands labeled by $k$ (in figure \ref{fig:1} crossings $t_2$ and $t_3$ are positive while $t_1$ and $t_4$ are negative). Without this shift Reidemeister I would state that a positive curl involving a strand labeled $k$ can be undone at the cost of shifting by $[-k(m-k)]$.

\begin{Theorem}\label{thm:main1}
The maps above describe a well defined functor $\Psi: \Tang \rightarrow \K$.
\end{Theorem}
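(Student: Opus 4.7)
The plan is to verify each relation of Lemma~\ref{lem:relations} by organizing the cases according to whether the colours involved at each crossing coincide or differ. The strategy rests on two pillars: whenever a local move involves only strands of equal colour, Proposition~\ref{prop:equivalence} identifies the kernel defined by~(\ref{eq:crossing2}) with the Rickard-type kernel used in \cite{CK1,CK2,C2}, so the relation reduces to its monochromatic counterpart, already known from those papers; whenever the colours at a crossing differ, Lemma~\ref{lem:graph} forces $Z(\uk)^i_\uw$ to be the graph of an isomorphism, so the crossing kernel~(\ref{eq:crossing1}) is a line-bundle twist of an isomorphism and compositions become transparent.

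I would begin with the easy cases. For the new colour-passing relation $\sT_1(s_i\cdot\uk)^i_{s_i\cdot\uw} * \sT_1(\uk)^i_\uw$ with $w_i \ne w_{i+1}$, Lemma~\ref{lem:graph} and the eigenspace characterization in its proof show that both factors are structure sheaves of graphs of mutually inverse isomorphisms $Y(\uk)_\uw \leftrightarrow Y(s_i \cdot \uk)_{s_i \cdot \uw}$; the convolution is then the structure sheaf of the diagonal twisted by the pullback of the product of the two $\rho$'s, and this product is trivial by an explicit determinant calculation using the splitting $\det(L_{i+1}/L_{i-1}) \cong \det(L_{i+1}/L_i) \otimes \det(L_i/L_{i-1})$. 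For Reidemeister~0, the height-change isotopies, the pitchfork move, and Reidemeister~I, every generator involved is a cap, cup, or self-crossing, and the colours meeting at each are forced to coincide. Hence all relevant kernels live in a flat family over a subset of $\C^n$ cut out by $w$-diagonal equalities, and the isomorphisms established in the central fibre $\uw = (0,\dots,0)$ in \cite{CK1,CK2,C2} deform flatly to the whole family, using the flatness of $Y(\uk)_{\C^n}$, $X(\uk)^i_{\C^{n-1}}$, $Z(\uk)^i_{\C^n}$ and the relevant bundles. Reidemeister~II likewise splits into a monochromatic case (invertibility of~(\ref{eq:crossing2}) from \cite{CK2,C2}) and a different-colour case (both kernels are graphs of inverse isomorphisms whose line-bundle twists cancel).

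The main obstacle is Reidemeister~III with three strands coloured $(a,b,c)$. The cases $a=b=c$ and $a,b,c$ pairwise distinct reduce to the monochromatic statement and a direct computation with three graph kernels, respectively. The intermediate strata where exactly two of the three colours agree mix the $j_*\sL$ kernels with graph kernels. To handle these I would assemble all three crossing kernels into a single flat family over $\C^3$ (the colour parameters) using the construction of section~\ref{sec:kernelfamily}, so that both sides of Reidemeister~III become coherent sheaves on a fibre product of $Y$-varieties over $\C^3$. The isomorphism between the two sides holds on the dense open stratum of distinct colours and on the small diagonal; flatness of both families together with an $\Ext$-semicontinuity/rigidity argument then forces the isomorphism on the partial-coincidence strata as well. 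Bridging the two \emph{a priori} distinct formulations~(\ref{eq:crossing1}) and~(\ref{eq:crossing2}) across the colour-coincidence wall is exactly what Proposition~\ref{prop:equivalence} is designed to enable.
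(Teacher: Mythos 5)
Your high-level organization (split by colour coincidences; reduce same-colour cases to the monochromatic results from \cite{C2} via Proposition~\ref{prop:equivalence}; use Lemma~\ref{lem:graph} for different-colour crossings) matches the paper's proof, and the treatment of the colour-passing relation and Reidemeister~II is essentially correct. However there are two genuine gaps.

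First, the pitchfork move does not fall into your ``all colours forced to coincide'' basket. The cap $c_1$ forces two strands to share a colour, but the crossing in Figure~\ref{fig:2} involves a third strand that may be coloured differently, so the pitchfork relation can mix a same-colour cap with a different-colour crossing. The paper handles this case separately, observing that the different-colour crossing is a line-bundle twist of the graph of an isomorphism, which makes the relation transparent; your argument as written omits it. (Relatedly, your invocation of a ``flat deformation to the whole family'' for the purely same-colour moves is unnecessary: when $w_i = w_{i+1} = w$, the varieties and kernels over $(w,\dots,w)$ are obtained from those over $\uo$ by translating $z \mapsto z-w$, so there is nothing to deform.)

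Second, and more seriously, your treatment of Reidemeister~III in the mixed case where exactly two of the three colours agree does not work. You propose to deduce the isomorphism on the partial-coincidence strata from the dense open stratum and the small diagonal by flatness plus Ext-semicontinuity/rigidity. But semicontinuity is an inequality in the wrong direction --- it only says $\dim\Hom$ can jump up at special points --- so it gives no control over whether a generic isomorphism survives specialization to an intermediate stratum. The family-level tool the paper actually has available, Lemma~\ref{lem:families}, requires the fibrewise isomorphism $\sM|_{\tY_p}\cong\sN|_{\tY_p}$ at \emph{every} $p$ as a hypothesis and only then produces the isomorphism over the base; it cannot manufacture the fibrewise isomorphism on the intermediate strata from its existence elsewhere. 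So your rigidity argument is circular at exactly the point where the content lies. The paper instead does a direct computation in this case: it shows that $\sT^i * \sT^{i+1} * \sT^i$ (with $w_i = w_{i+1} \neq w_{i+2}$) is supported on a variety $Z$ abstractly isomorphic to $Z(\uk)^i_\uw$, identifies the composite explicitly with $j'_*\sL(\uk)^i_\uw$ for a suitable open embedding $j'$, and checks the other side gives the same answer. You need an argument at this level of explicitness, or a different mechanism that does not presuppose the pointwise isomorphism you are trying to prove.
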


\section{Proof of Theorem \ref{thm:main1}}\label{sec:mainthm}
We must prove that the relations from Lemma \ref{lem:relations} hold.

\subsection{Strands coloured the same}
When $ w_i = w_{i+1} $, then the kernels used to define the cups, caps and crossings come from the categorical $\sl_n$ action from \cite[Section 8]{C2} (which differs from that in \cite{CKL1} by conjugation with a line bundle, see Remark \ref{rem:kernel}).  For cups and caps, this is clear and for crossings this is proved in Proposition \ref{prop:equivalence}.

Thus the results from \cite{C2} show that when all strands have the same colour, then these kernels satisfy all the relations from Lemma \ref{lem:relations}.

\subsection{Strands coloured differently}

It remains to show these relations when at least two of the strands are coloured differently.   In this case there is no version of Reidemeister move 0 or I. Reidemeister move II is easy since $Z(\uk)_\uw^i$ is the graph of an isomorphism. Reidemeister III involves two cases depending on whether or not all three strands are coloured differently.

\subsubsection{Reidemeister III}
If all three strands are coloured differently then each crossing is given by the graph of an isomorphism so this is easy to check. We now prove the case when the three strands are coloured by $w_i=w_{i+1} \ne w_{i+2}$ (the cases $w_i=w_{i+2} \ne w_{i+1}$ and $w_i \ne w_{i+1}=w_{i+2}$ are dealt with similarly).

We need to show that
$$\sT_{l_3}(s_{i+1}s_i \cdot \uk)^i_{s_{i+1}s_i \cdot \uw} * \sT_{l_2}(s_i \cdot \uk)^{i+1}_{s_i \cdot \uw} * \sT_{l_1}(\uk)^i_{\uw} \cong \sT_{l_1}(s_is_{i+1} \cdot \uk)^{i+1}_{s_is_{i+1} \cdot \uw} * \sT_{l_2}(s_{i+1} \cdot \uk)^{i}_{s_{i+1} \cdot \uw} * \sT_{l_3}(\uk)^{i+1}_{\uw}.$$
For simplicity we omit the $\uk$,$\uw,l_i$ and assume all $l_i=1$. On the left hand side the right most $\sT^i$ is $j_* \sL(\uk)^i_\uw$ while the subsequent $\sT^i * \sT^{i+1}$ are both given (up to tensoring by line bundles) by the graphs of isomorphisms as in Lemma \ref{lem:graph}. In particular, the left hand side is supported on the variety
$$Z := \{(L_\cdot, L'_{\cdot}): L_j = L_j' \text{ for } j \ne i,i+1 \} \subset Y(\uk)_\uw \times Y(s_i s_{i+1} s_i \cdot \uk)_{s_i s_{i+1} s_i \cdot \uw}.$$
This variety is isomorpic to $Z(\uk)^i_\uw$ using the same argument as in Lemma \ref{lem:graph}. Namely, to recover $Z(\uk)^i_\uw$ from $Z$ we need to recover $L_i'$. This can be done by taking $L_i'$ inside $Z(\uk)^i_\uw$ to be the $w_i$-eigenspace of $z|_{L'_{i+1}/L_{i-1}}$ inside $Z$. One can similarly recover $Z$ from $Z(\uk)^i_\uw$.

It then follows that the left hand side is isomorphic to $j'_* \sL(\uk)^i_\uw$ where $j'$ is the embedding of $Z^o(\uk)^i_\uw$ inside $Z$ under the isomorphism $Z \cong Z(\uk)^i_\uw$ mentioned above. One can similarly show that the right hand side is induced by the same kernel $j'_* \sL(\uk)^i_\uw$.

\subsubsection{Other relations}
Next one needs to prove the fork moves (such as the one in figure \ref{fig:2}) when the two strands are coloured differently. This is straightforward to do because now the crossings now are the graph of an isomorphism.

The isotopy relations are clear because the definitions of the kernels only involve nearby strands. Finally, if $w_i \ne w_{i+1}$ then we need to show that
$$\sT_{l}(s_i \cdot \uk)^i_{s_i \cdot \uw} * \sT_{l}(\uk)^i_\uw \cong \O_\Delta.$$
This is because on the left hand side, $L_i''/L_{i-1}$ is the $w_i$-eigenspace of $L_{i+1}/L_{i-1}$ and hence equal to $L_i/L_{i-1}$. Moreover, the line bundles on the left hand side are isomorphic to
$$\left( \det(L_{i+1}/L_i)^{k_i} \otimes \det(L_{i+1}/L_i')^{-k_{i+1}} \right) \otimes \left( \det(L_{i+1}/L_i')^{k_{i+1}} \otimes \det(L_{i+1}/L_i'')^{-k_i} \right)$$
which is trivial (since $L_i = L_i''$). This shows that strands with different colours pass through each other which completes the proof of Theorem \ref{thm:main1}.

\section{Link homology theory}

\subsection{Equivariant enhancement}

Let $K$ be a link with $r$ components $K_1, \dots, K_r$ coloured with $w_1, \dots, w_r$. Theorem \ref{thm:main1} associates to $K$ a kernel $\Psi(K)_{\uw}$ belonging to $D(pt)$ ({\it i.e.} a complex of vector spaces).

Suppose that $ \uw = \uo = (0, \dots, 0)$.   In this case, all the varieties involved carry an action of $ \k^\times $ and we can work in the categories of $\k^\times$-equivariant kernels. To do this we must modify our above definitions as follows:
\begin{align*}
\Psi(c_1(\uk)^i_\uo) &= \sC_1(\uk)^i_\uo := \O_{X(\uk)^i_\uo}  \\
\Psi(c_2(\uk)^i_\uo) &= \sC_2(\uk)^i_\uo := \sigma(\O_{X(\uk)^i_\uo} \otimes \det(L_i/L_{i-1})^{m-k_i} \otimes \det(L_{i+1}/L_i)^{-k_i}) \{k_i(m-k_i)\} \\
\Psi(t_1(\uk)^i_\uo) &= \sT_1(\uk)^i_\uo := j_* \sL(\uk)^i_\uo,  \\
\Psi(t_2(\uk)^i_\uo) &= \sT_2(\uk)^i_\uo := j_* \sL(\uk)^i_\uo\{-k_i\} \\
\Psi(t_3(\uk)^i_\uo) &= \sT_3(\uk)^i_\uo := j_* \sL(\uk)^i_\uo\{-m+k_{i+1}\} \\
\Psi(t_4(\uk)^i_\uo) &= \sT_4(\uk)^i_\uo := j_* \sL(\uk)^i_\uo\{k_{i+1}-k_i\}.
\end{align*}
\begin{Remark}\label{rem:shift}
As before, to make $\Psi(K)_{\uo}$ invariant under Reidemeister I we shift the definition above by $[\sum_k d_k k(m-k)]\{- \sum_k d_k k (m-k)\}$ where $d_k$ is the number of positive minus the number of negative crossings in $K$ involving two strands labeled $k$.
\end{Remark}

Using the same arguments as before, but keeping track of the $\C^\times$ equivariance, it is straightforward to check that with these definitions give a tangle invariant (when all strands coloured 0). In fact, the somewhat strange looking grading shifts above are uniquely determined by requiring that all the fork moves and Reidemeister II moves hold. The resulting link invariant $ \Psi(K)_{\uo} $ belongs to $ D^{\C^\times}(pt) $ ({\it i.e.} a complex of graded vector spaces) and recovers our earlier doubly graded homology from \cite{CK1,CK2,C2}.

\subsection{Family of homology theories}
On the other hand, if all $w_i$ are distinct, then $\Psi(K)_{\uw} \cong \Psi(K_1)_{w_1} \otimes \cdots \Psi(K_r)_{w_r}$ because strands of different colours can pass through each other.

The doubly graded homology theory at the central fibre and the singly graded homology theory at the general fibre are related as follows.
\begin{Theorem}\label{thm:main2}
There exists a spectral sequence which starts at $\Psi(K)_{\uo}$ and converges to $\Psi(K_1)_{w_1} \otimes \dots \otimes \Psi(K_r)_{w_r}$.
\end{Theorem}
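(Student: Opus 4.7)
The plan is to package the construction into a family invariant $\sC := \Psi(K)_{\C^r}$ over the parameter space $\C^r$ of colors, and then extract the spectral sequence from a deformation-to-the-normal-cone filtration on this family. Applying the tangle functor $\Psi$ of Theorem~\ref{thm:main1} to $K$ as an endomorphism of the empty object with its $r$ components colored by the coordinate functions $w_1,\dots,w_r$, and using the family versions of caps, cups and crossings built in Section~\ref{sec:kernelfamily} (in particular, Proposition~\ref{prop:equivalence} for the crossings with equal colors), we obtain a bounded complex $\sC$ of finitely generated graded modules over $R:=\C[w_1,\dots,w_r]$ (with $\deg w_i = 2$). The two properties of $\sC$ we need are: (i) the derived fiber $\sC\otimes^L_R \C_\uo$ at the origin recovers the $\C^\times$-equivariant invariant $\Psi(K)_\uo$ described in Remark~\ref{rem:shift}; and (ii) for any $\uw\in\C^r$ with all coordinates distinct, the colored-strand pass-through relation of Lemma~\ref{lem:relations} lets us isotope $K$ into a disjoint union of its components, so that $\sC|_\uw\cong \Psi(K_1)_{w_1}\otimes\cdots\otimes\Psi(K_r)_{w_r}$.

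To produce a single-parameter spectral sequence I would restrict to a line: choose $\uw\in\C^r$ with all coordinates distinct and nonzero, and pull back along $\iota:\A^1\hookrightarrow\A^r$, $t\mapsto t\uw$. This gives $\sC_t:=\iota^*\sC$, a bounded complex of graded $\C[t]$-modules satisfying $\sC_t|_{t=0}\cong\Psi(K)_\uo$ and $\sC_t|_{t\ne 0}\cong\Psi(K_1)\otimes\cdots\otimes\Psi(K_r)$. Representing $\sC_t$ by a bounded complex of finitely generated free $\C[t]$-modules (which exists because $\sC$ is perfect, being built by finitely many convolutions of perfect kernels) and endowing it with the decreasing $t$-adic filtration $F^p\sC_t := t^p\sC_t$, the associated graded satisfies $\operatorname{gr}^p\sC_t \cong \Psi(K)_\uo$ for every $p\ge 0$. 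The spectral sequence of this filtered complex then has $E_1$ page built entirely from copies of $\Psi(K)_\uo$ and converges to $H^*(\sC_t)$; inverting $t$ identifies $H^*(\sC_t)\otimes_{\C[t]}\C(t)$ with $\Psi(K_1)\otimes\cdots\otimes\Psi(K_r)\otimes_\C\C(t)$, which yields the abutment claimed in the statement.

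The main obstacle is verifying (i) and (ii) cleanly. For (i) one must check that the $\C^\times$-equivariant grading shifts in the central-fiber definitions from the equivariant enhancement arise naturally from restricting $\sC$ to $\uo$, which reduces to a careful accounting of the dualizing sheaves entering through the adjunctions $\sP^L,\sP^R$ as one restricts the family over $\C^r$ to the origin. For (ii) one must promote the arguments of Section~\ref{sec:mainthm} (which handle pitchforks, Reidemeister moves and the pass-through relation fiberwise) to the family level over $R$; the key input is again Proposition~\ref{prop:equivalence} together with the fact that $Z(\uk)^i_{\C^n}$ is the graph of an isomorphism over the open stratum $\{w_i\ne w_{i+1}\}$ (Lemma~\ref{lem:graph}). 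Once (i) and (ii) are in place the rest is the standard spectral sequence of a filtered complex; the only subtlety on the convergence side is that the filtration abuts to $H^*(\sC_t)$ as a filtered $\C[t]$-module, so one has to pass to the generic fiber (equivalently, invert $t$) to identify the abutment with the tensor product.
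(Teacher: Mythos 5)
Your proof follows essentially the same route as the paper's: restrict the family invariant $\Psi(K)_{\C^r}$ of Proposition~\ref{prop:homology} to a line through the origin and $\uw$, represent the result by a bounded complex of free graded $\k[t]$-modules, and take the spectral sequence of the $t$-adic (equivalently, degree) filtration, whose $E_1$ page is built from $\Psi(K)_\uo$ and whose abutment after inverting $t$ is identified with the tensor product of the component homologies. The only cosmetic difference is that the paper cites Proposition~\ref{prop:homology} directly for the fiberwise identifications (i) and (ii), whereas you re-derive them; otherwise the argument is the same.
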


The key to proving Theorem \ref{thm:main2} is realizing that $\Psi(K)_{\uw}$ actually gives us a family of homologies. In other words, using the notation above, there exists $\Psi(K)_{\C^r} \in D^{\k^\times}(\k^r)$  which specializes to $ \Psi(K)_{\uw} $ for each $\uw \in \k^r$.

\subsection{Kernels in families} \label{sec:kernelfamily}

Recall that our varieties $Y(\uk)_\uw$ come in families $Y(\uk)_{\k^n} \rightarrow \k^n$. Moreover, one can show that all the generating kernels we defined also live in families.

More precisely, let $ \uk $ be such that $ k_i + k_{i+1} = m $.  We define
$$
\sC_1(\uk)^i_{\C^{n-1}} := \O_{X(\uk)^i_{\C^{n-1}}} \in D^{\k^\times}(Y(\uk)_{\C^{n-1}} \times_{\C^{n-2}} Y(d_i \cdot \uk)_{\C^{n-2}}).
$$
Here in the definition of the fibre product $Y(\uk)_{\C^{n-1}} \times_{\C^{n-2}} Y(\uk)_{\C^{n-2}}$, the map from the first factor is uses $ d_i$. Similarly, we define
$$ \sC_2(\uk)^i_{\C^{n-1}} := \sigma(\O_{X(\uk)^i_{\C^{n-1}}} \otimes \det(L_i/L_{i-1})^{m-k_i} \otimes \det(L_{i+1}/L_i)^{-k_i}) \{k_i(m-k_i)\}. $$

For crossings, the situation is a little bit more complicated, since we originally defined $ \sT(\uk)^i_{\uw} $ using a complicated sheaf when $ w_i = w_{i+1} $.

Let $ \uk $ be any sequence (for notational convenience suppose $k_i \le k_{i+1}$). Recall that we have the correspondence $ Z(\uk)_{\C^n}^i \subset Y(\uk)_{\C^n} \times_{\C^n} Y(s_i \cdot \uk)_{\C^n}$.  We define
$Z^o(\uk)^i_{\k^n} \subset Z(\uk)^i_{\k^n}$ to be the open locus given by the condition
$$k_i+k_{i+1}+1 \ge \dim(\ker(z-w_iI)|_{L_{i+1}/L_{i-1}}) + \dim((L_i \cap L_i')/L_{i-1}).$$
Let $ j : Z^o(\uk)^i_{\k^n} \rightarrow Z(\uk)^i_{\k^n} $ denote the inclusion.

The restriction of $Z(\uk)^i_{\C^n}$ to $w_i=w_{i+1}$ gives us a scheme with $N-k_{i+1}+1$ components where $N = \min(k_i+k_{i+1},m)$. These components, denoted $ Z_s(\uk)^i_{\C^n} $ for $ s = 0, \dots, N - k_{i+1}$, are given by
$$ w_i = w_{i+1}, \ \ \dim(\ker(z-w_iI)|_{L_{i+1}/L_{i-1}}) \ge k_{i+1}+s \ \ \text{ and } \ \ \dim((L_i\cap L'_i)/L_{i-1}) \ge k_i-s.
$$
We also let $ Z_s^o(\uk)^i_{\k^n} := Z_s(\uk)^i_{\k^n} \cap Z^o(\uk)^i_{\k^n} $. Finally, we define
$$ \sT_1(\uk)^i_{\C^n} := j_* \sL(\uk)_{\k^n}^i \in D^{\C^\times}(Y(\uk)_{\C^n} \times_{\C^n} Y(s_i \cdot \uk)_{\C^n}) $$
where  $\sL(\uk)_{\k^n}^i $ is the $\C^\times$ equivariant line bundle on $ Z^o(\uk)^i_{\C^n}$ defined by
$$\sL(\uk)_{\k^n}^i := \O_{Z^o(\uk)^i_{\k^n}} \left( \sum_{s=0}^{N-k_{i+1}} \binom{s+1}{2} [Z^o_s(\uk)_{\k^n}^i] \right) \otimes \rho \{k_i(k_{i+1}-k_i-1)\}$$
where $\rho = \det(L_{i+1}/L_i)^{k_i} \otimes \det(L_{i+1}/L_i')^{-k_{i+1}}$. Similarly, we define
\begin{align*}
\sT_2(\uk)^i_{\C^n} &:= j_* \sL(\uk)_{\k^n}^i\{-k_i\} \\
\sT_3(\uk)^i_{\C^n} &:= j_* \sL(\uk)_{\k^n}^i\{-m+k_{i+1}\} \\
\sT_4(\uk)^i_{\C^n} &:= j_* \sL(\uk)_{\k^n}^i\{k_{i+1}-k_i\}
\end{align*}

\begin{Lemma}\label{lem:specializekernels}
\begin{enumerate}
\item Let $ \uk $ be such that $ k_i + k_{i+1} = m $ and let $ \uw $ be such that $ w_i = w_{i+1} $.   We have that
$$ \sC_1(\uk)^i_{\C^{n-1}}|_{Y(\uk)_\uw \times Y(d_i \cdot \uk)_{d_i \cdot \uw}} \cong \sC_1(\uk)^i_\uw. $$
A similar statement holds for $ \sC_2 $.
\item Let $ \uk $ be any sequence.  For any $\uw $, we have that
$$ \sT_1(\uk)^i_{\C^n}|_{Y(\uk)_\uw \times Y(s_i \cdot \uk)_{s_i \cdot \uw}} \cong \sT_1(\uk)^i_\uw. $$
Moreover, the kernel $ \sT_1(\uk)^i_{\C^n} $ is invertible. A similar statement holds for $ \sT_2, \sT_3, \sT_4 $.
\end{enumerate}
\end{Lemma}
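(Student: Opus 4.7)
The plan is to handle the two items separately, reducing each to either a flatness statement for the underlying correspondence or to a family version of Proposition \ref{prop:equivalence}.

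For part (i), I would first establish that the family $X(\uk)^i_{\C^{n-1}}$ is flat over the base $\C^{n-1}$. The fibrewise argument from section \ref{sec:subvar} deforms without change: the projection $q$ forgetting $L_i$ extends to a morphism $X(\uk)^i_{\C^{n-1}} \to Y(d_i \cdot \uk)_{\C^{n-2}}$ which is a $\G(k_i,m)$-bundle (the subquotient $L_{i+1}/L_{i-1}$ varies as a rank $m$ locally free sheaf), so the total space is smooth over $Y(d_i \cdot \uk)_{\C^{n-2}}$ and hence flat over $\C^{n-1}$. Flatness yields that $\O_{X(\uk)^i_{\C^{n-1}}}$ restricts to $\O_{X(\uk)^i_\uw}$ on each fiber, handling $\sC_1$. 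The $\sC_2$ case follows because the twisting line bundles are built from tautological bundles which restrict compatibly and the grading shifts are intrinsic.

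For part (ii), I would split on whether $w_i = w_{i+1}$. If $w_i \ne w_{i+1}$, I would verify that the defining inequality of $Z^o(\uk)^i_{\C^n}$ is automatic on the fiber: since $z|_{L_{i+1}/L_{i-1}}$ is diagonalizable with eigenvalues $w_i,w_{i+1}$ of multiplicities $k_{i+1},k_i$ respectively, $\dim \ker(z-w_iI)|_{L_{i+1}/L_{i-1}} = k_{i+1}$, while $\dim((L_i \cap L_i')/L_{i-1}) \le k_i$, so the sum is $\le k_i+k_{i+1}$. Hence $j$ restricts to the identity on $Z(\uk)^i_\uw$, which by Lemma \ref{lem:graph} is the graph of an isomorphism; the divisors $Z^o_s(\uk)^i_{\C^n}$ lie entirely over $w_i = w_{i+1}$ and so do not meet the fiber, and $\sL(\uk)^i_{\C^n}$ restricts to $\rho$ with the claimed shift, matching (\ref{eq:crossing1}). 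If instead $w_i = w_{i+1}$, the fiber of $Z^o(\uk)^i_{\C^n}$ is $Z^o(\uk)^i_\uw$ by construction, and the family divisors $Z^o_s(\uk)^i_{\C^n}$ restrict to the components of $Z^o(\uk)^i_\uw$; a direct computation using the identity $\binom{s+1}{2}-\binom{s}{2}=s$ shows that $\O(\sum_s \binom{s+1}{2}[Z^o_s(\uk)^i_{\C^n}])$ restricts on each component $Z^o_{s_0}(\uk)^i_\uw$ to $\O_{Z^o_{s_0}(\uk)^i_\uw}(D^o_{s_0,+})$, matching $\sL(\uk)^i_\uw$ as described in Lemma \ref{lem:lb}.

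The hard part is base change for $j_*$: since $j$ is an open embedding rather than a proper map, $j_*$ does not in general commute with restriction to fibers, so the fibrewise computations above do not immediately yield the desired isomorphism. To deal with this I would invoke Proposition \ref{prop:equivalence}, which identifies $j_*\sL(\uk)^i_\uw$ as arising from a Rickard-style complex whose terms are structure sheaves of subvarieties fitting into flat families over $\C^n$. A parallel identification at the family level shows that $j_*\sL(\uk)^i_{\C^n}$ is flat over $\C^n$ and its formation commutes with arbitrary base change $\uw \in \C^n$, so the fibrewise isomorphisms established above suffice. Invertibility of $\sT_l(\uk)^i_{\C^n}$ for $l=1,\ldots,4$ then follows from the same family version of Proposition \ref{prop:equivalence}, which simultaneously produces the inverse kernel; alternatively one checks invertibility fibrewise using Proposition \ref{prop:equivalence} and upgrades to a family statement via flatness together with the fact that $\sT_l^L(\uk)^i_{\C^n} * \sT_l(\uk)^i_{\C^n}$ is a coherent sheaf supported on the diagonal whose restriction to each fiber is $\O_\Delta$.
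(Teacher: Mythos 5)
You correctly identify that the hard point is that $j_*$ (the underived pushforward along the open embedding $j$) does not obviously commute with restriction to fibers, and your analysis of the $w_i\ne w_{i+1}$ fiber is fine: there the defining inequality of $Z^o$ holds automatically (by the eigenvalue count you give), so the fiber of $Z(\uk)^i_{\C^n}$ is entirely contained in $Z^o(\uk)^i_{\C^n}$, and restricting $j_*\sL$ to that fiber reduces by adjunction to restricting $\sL$ itself; the divisor classes $[Z^o_s(\uk)^i_{\C^n}]$ restrict trivially since the fiber misses their support, so one recovers $\sT_l(\uk)^i_\uw$ as in~(\ref{eq:crossing1}).

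However, your proposed resolution of the base-change problem over the locus $w_i=w_{i+1}$ is wrong, and wrong in a way the paper explicitly warns about. You claim that Proposition~\ref{prop:equivalence} exhibits $j_*\sL(\uk)^i_\uw$ as the convolution of a Rickard-style complex ``whose terms are structure sheaves of subvarieties fitting into flat families over $\C^n$,'' and that a parallel family-level identification then gives flatness of $j_*\sL(\uk)^i_{\C^n}$ and compatibility with base change. But the terms $\Theta_s$ of that complex are built from the correspondences $W^s$, $W^{l-k+s}$ (and more generally from the $X(\uk)^i_\uw$ used to define the categorical $\sl_n$-action), and these do \emph{not} deform over the locus $w_i\ne w_{i+1}$: see the Remark after~(\ref{eq:components}), and the introduction, which stresses that ``the terms in this complex do not deform. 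Thus, it is quite surprising and non-trivial that the total complex does deform.'' So the mechanism you invoke is precisely the one that fails, and the nontriviality of the lemma is that the total object deforms even though the Rickard pieces do not. The actual argument the paper uses is Theorem~\ref{thm:kernel2}, which directly asserts invertibility of $\sT_1(\uk)^i_{\C^n}=j_*\sL(\uk)^i_{\C^n}$ and its restriction to the central fiber, following~\cite[Thm.~4.1]{C1}; that proof works with the pushed-forward line bundle itself (using codimension estimates as in Lemma~\ref{lem:Z^o} and Corollary~\ref{cor:Z^o}) rather than trying to deform the Rickard complex. Your argument for part (i) via flatness of $X(\uk)^i_{\C^{n-1}}$ is fine and agrees in spirit with the paper's one-line ``clear.''
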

\begin{Remark} In this Lemma, when $ \uw \ne \uo$, we mean an isomorphism of non-equivariant sheaves, whereas when $ \uw = \uo $, we mean an isomorphism of $ \C^\times$-equivariant sheaves.
\end{Remark}

\begin{proof}
The case of the cups and caps is clear. The invertibility of the kernel $ \sT_1(\uk)^i_{\C^n} $ follows from Theorem \ref{thm:kernel2}. The isomorphism also follows from Theorem \ref{thm:kernel2} using the fact that if $w_i \ne w_{i+1}$ then the restriction of the line bundle $\O_{Z^o(\uk)^i_{\C^n}}([Z_s^o(\uk)^i_{\C^n}])$ is trivial.
\end{proof}

\subsection{The definition of the coloured link homology} \label{sec:colourlink}

As before, let $K$ be a link with $r$ components $K_1, \dots, K_r$.  Choose a generic projection of the link.  As usual, consider horizontal slices of the link such that between any two horizontal slices, we see only one generator of the tangle category.  Fix a particular slice, and suppose it cuts $ n $ strands of $K$.  Let $ j_1, \dots, j_n  \in \{1, \dots, r \}$ denote the link components that the strands belong to (reading from left to right).  This data defines a map $ \C^r \rightarrow \C^n $ and we can form the base change $ Y(\uk)_{\C^r} $ of $ Y(\uk)_{\C^n} $ along this map.

Now consider two neighbouring slices with $ n $ and $ n'$ strands (necessarily we have $ n' \in \{n, n-2, n+2 \}$) labeled $ \uk, \uk' $ respectively.  We have maps $ \C^r \rightarrow \C^n $ and $ \C^r \rightarrow \C^{n'} $ and so we can form $ Y(\uk)_{\C^r} \times_{\C^r} Y(\uk')_{\C^r} $, by making the base change along the maps from $ \C^r $ followed by the fibre product.  To this pair of neighbouring slices, we will now associate a kernel in $ D^{\C^\times}(Y(\uk)_{\C^r} \times_{\C^r} Y(\uk')_{\C^r}) $.

The two slices differ by a crossing, a cap, or a cup.  The kernel we assign will be the one from the previous section pulled back to $ Y(\uk)_{\C^r} \times_{\C^r} Y(\uk')_{\C^r}$. For example, if it is a crossing of type 1 (this forces $ n' = n $ and $ \uk' = s_i \cdot \uk $) then we assign the kernel $ f^* \sT_1(\uk)^i_{\C^n}$, where
$$f: Y(\uk)_{\C^r} \times_{\C^r} Y(s_i \cdot \uk)_{\C^r} \rightarrow Y(\uk)_{\C^n} \times_{\C^n} Y(s_i \cdot \uk)_{\C^n}. $$

Now that we have associated a kernel to each pair of neighbouring slices, we compose them (relative to $ \C^r$) to obtain $\Psi(K)_{\C^r} \in D^{\C^\times}(\C^r)$. Finally, to obtain invariance under Reidemeister I, we shift this by $[\sum_k d_k k(m-k)]\{- \sum_k d_k k (m-k)\}$ where $d_k$ is the number of positive minus the number of negative crossings in $K$ involving two strands labeled $k$ ({\it c.f.} Remark \ref{rem:shift}).

\begin{Proposition}\label{prop:homology}
If $K$ is a link with $r$ components then $\Psi(K)_{\C^r} \in D^{\C^\times}(\C^r)$ is a link invariant. Moreover, for any $ \uw \in \k^r $, $\Psi(K)_{\C^r}|_{\uw}  \cong \Psi(K)_\uw$.
\end{Proposition}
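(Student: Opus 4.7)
The plan is to prove the specialization statement first, then use it together with the fibrewise link invariance of Theorem~\ref{thm:main1} to deduce link invariance of $\Psi(K)_{\C^r}$ itself.

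For the specialization $\Psi(K)_{\C^r}|_{\uw} \cong \Psi(K)_{\uw}$, the key point is that convolution relative to $\C^r$ commutes with the derived restriction to a closed point $\uw \in \C^r$. Each projection $Y(\uk)_{\C^n} \to \C^n$ is an iterated $\G(k_i,m)$-bundle and hence smooth, and this property is preserved by the flat base change $\C^r \to \C^n$ defining $Y(\uk)_{\C^r}$. In particular every fibre product $Y(\uk)_{\C^r} \times_{\C^r} Y(\uk')_{\C^r}$ appearing in the construction has the expected dimension without derived corrections, and the projections from such triple products relative to $\C^r$ are flat. Standard flat base change then yields
$$i_\uw^*(\sQ *_{\C^r} \sP) \cong (i_\uw^*\sQ) *_{pt} (i_\uw^*\sP)$$
for any composable pair, and combining this with Lemma~\ref{lem:specializekernels} (which gives the specialization of each generator kernel) identifies $\Psi(K)_{\C^r}|_{\uw}$ with the composition defining $\Psi(K)_{\uw}$.

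For invariance, it suffices to verify that the family kernels $\sC_j(\uk)^i_{\C^{n-1}}$ and $\sT_l(\uk)^i_{\C^n}$ satisfy all the relations of Lemma~\ref{lem:relations} at the family level, as kernels in $D^{\C^\times}(Y(\uk)_{\C^r} \times_{\C^r} Y(\uk')_{\C^r})$. The strategy is to repeat the proof of Theorem~\ref{thm:main1} relative to the base $\C^r$. Relations that only involve strands of generically distinct colour are governed by the graph-of-isomorphism structure of $Z(\uk)^i_{\C^n}$ away from the diagonals $\{w_i=w_{i+1}\}$; the identifications used in Section~\ref{sec:mainthm} lift directly, because the defining loci and eigenspace decompositions used there all deform naturally in the $w$-parameters. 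Relations confined to strands of the same colour take place on a diagonal locus $\{w_i = w_{i+1}\} \subset \C^n$, and their proofs reduce to kernel identifications which were set up in families in Proposition~\ref{prop:equivalence} and, for the more involved cases, to the family version of the categorical $\sl_n$-action of \cite{C2}.

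The main obstacle is the mixed-colour Reidemeister~III relation, where one pair of the three strands collides while the other two do not. As in Section~\ref{sec:mainthm}, both sides must be identified with a single family kernel of the form $j'_*\sL(\uk)^i_{\C^n}$ on a common correspondence, and one must check that the iterated graph-of-isomorphism identifications of Lemma~\ref{lem:graph}, applied in families, combine compatibly with the $j_*\sL$ construction along the collision stratum. Because all the ingredient identifications are natural in the parameters $\uw$, this is essentially a bookkeeping task: the specialization statement already proved, together with the fibrewise isomorphisms of Theorem~\ref{thm:main1}, reduces it to checking that two morphisms between perfect-enough family kernels coincide on each fibre, which is precisely what Theorem~\ref{thm:main1} gives.
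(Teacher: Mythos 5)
The specialization part of your proof is fine (base change along the closed embedding $\uw \hookrightarrow \C^r$, plus Lemma~\ref{lem:specializekernels}, which is also how the paper handles it), but the invariance part has a genuine gap at exactly the place you tried to paper over. Your final step asserts that the problem ``reduces to checking that two morphisms between perfect-enough family kernels coincide on each fibre, which is precisely what Theorem~\ref{thm:main1} gives.'' This is not what Theorem~\ref{thm:main1} gives, and more importantly it elides the real mathematical content: an isomorphism of family kernels over $\C^r$ (or over $\C^n$) does \emph{not} follow automatically from knowing that the two kernels become isomorphic after restriction to every closed point $\uw$. Fibrewise isomorphism of objects in a derived category over a positive-dimensional base is strictly weaker than isomorphism of the family; you need to actually glue the fibrewise isomorphisms, and there are obstructions to doing so.

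The paper's proof handles this by a dimension-reduction trick followed by Lemma~\ref{lem:families}. Since any one relation (say Reidemeister~III) involves at most three strands, one first restricts the family to the $3$-dimensional slice $\{(w_i,w_{i+1},w_{i+2})\}$; then, because the deformation along the diagonal $(w,w,w)$ is trivial, one may further restrict to the $2$-dimensional base $B = \{w_i+w_{i+1}+w_{i+2}=0\}$. Lemma~\ref{lem:families} then says precisely that for a proper flat family over a base with $\dim B \le 2$ and trivial $\operatorname{Pic}(B)$, if $\sN$ is a fibrewise-simple sheaf (that is, $\End^0(\sN|_{\tY_p}) \cong \k$ for all $p$) and $\sM$ is fibrewise isomorphic to $\sN$, then $\sM \cong \sN$ globally. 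The dimension bound on $B$ is used in the spectral-sequence argument showing $\sH^0$ of the relative Hom is flat, and the simplicity plus trivial Picard group are what pin down the gluing. None of these hypotheses or this mechanism appear in your proposal, yet they are indispensable: without them the passage from ``isomorphic on each fibre'' to ``isomorphic as a family'' simply does not go through. Your alternative of directly redoing the computations of Section~\ref{sec:mainthm} relative to $\C^r$ is in principle viable for the graph-of-isomorphism cases, but you yourself acknowledge you cannot carry it out for the mixed-colour Reidemeister~III move, and the fallback you propose there is exactly the unjustified step.
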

\begin{Remark} As before, in this proposition, when $ \uw \ne \uo $, we mean an isomorphism in the derived category of vector spaces, whereas when $ \uw = \uo $, we mean an isomorphism in the derived category of graded vector spaces.
\end{Remark}
\begin{proof}
To check this is a link invariant we need to check the relations from Lemma \ref{lem:relations}. Let us prove the Reidemeister III move (the other relations are proved similarly). For simplicity of notation, let us assume we are dealing with type 1 crossings and we supress the labels $\uk$ from the notation.  Thus we would like to prove
\begin{equation}\label{eq:rel}
\sT^i_{\k^n}  \sT^{i+1}_{\k^n}  \sT^i_{\k^n}  (\sT^{i+1}_{\k^n})^{-1}  (\sT^{i}_{\k^n})^{-1} (\sT^{i+1}_{\k^n})^{-1} \cong \O_\Delta \in D^{\C^\times}(Y(\uk)_{\k^n} \times_{\k^n} Y(\uk)_{\k^n})
\end{equation}
where there are $n$ strands. We know this relation holds if we restrict to any $\uw \in \k^n$. More generally, since this relation only involves three strands (strands $i,i+1$ and $i+2$) it also holds if we restrict to any $\k^{i-1} \times (w_i,w_{i+1},w_{i+2}) \times \k^{n-i-2} \subset \k^n$. Since the deformation along $\w = (w,\dots,w)$ is trivial we may further assume that $w_i+w_{i+1}+w_{i+2} = 0$. Relation (\ref{eq:rel}) now follows from Lemma \ref{lem:families} where we take $B = \k^2 \cong \{(w_i,w_{i+1},w_{i+2}): w_i+w_{i+1}+w_{i+2}=0 \}$. This concludes the proof of Reidemeister move III.

All the other moves from Lemma \ref{lem:relations} are either clear (isotopy moves) or involve at most three strands (so the argument above applies).

The fact that $ \Psi(K)_\uw $ specializes correctly follows from Lemma \ref{lem:specializekernels}.
\end{proof}

\begin{Lemma}\label{lem:families}
Suppose $\pi: \tY \rightarrow B$ is a proper, flat family with $\dim(B) \le 2$ and $Pic(B)$ trivial. For $\sM, \sN \in D(\tY)$ suppose that $\sN$ is a sheaf which remains a sheaf when restricted to any fibre $ \tY_p$.  Assume also that $\End^0(\sN|_{\tY_p}) \cong \k$ and that $\sM|_{\tY_p} \cong \sN|_{\tY_p}$ for any $p \in B$. Then $\sM \cong \sN$.
\end{Lemma}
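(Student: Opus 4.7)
The plan is to first upgrade $\sM$ to a sheaf flat over $B$, then use cohomology and base change together with the triviality of $\mathrm{Pic}(B)$ to produce a morphism $s : \sM \to \sN$, and finally check that $s$ is fibrewise (and hence globally) an isomorphism.

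First I would observe that $\sN$ is flat over $B$: the assumption that $\sN|_{\tY_p} = L i^*_p \sN$ remains a sheaf for every $p$ is equivalent to the vanishing of $\mathrm{Tor}^{\O_B}_{>0}(\sN, \kappa(p))$ for all $p$, which by the local criterion of flatness makes $\sN$ a $B$-flat sheaf. The same principle then applies to $\sM$: since $L i^*_p \sM \cong \sN|_{\tY_p}$ is concentrated in degree zero for every $p$, the standard amplitude result ``derived fibre in a single degree implies object in a single degree and $B$-flat'' (proved by Nakayama on the top non-vanishing cohomology sheaf of $\sM$, followed by a Tor-vanishing argument on the bottom) forces $\sM$ itself to be a single sheaf in degree zero and flat over $B$.

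Next I would consider the coherent sheaf $\sF := \sHom_{\tY}(\sM, \sN)$ on $\tY$ and its pushforward $\pi_* \sF$, a coherent sheaf on $B$ whose fibrewise hom $\Hom(\sM|_{\tY_p}, \sN|_{\tY_p}) = \End(\sN|_{\tY_p}) = \k$ is one-dimensional for every $p$. A spectral sequence analysis comparing $\pi_* \sF \otimes^L_{\O_B} \kappa(p)$ with $R\Hom(\sM|_{\tY_p}, \sN|_{\tY_p})$ via derived base change, using the constraint $\dim B \le 2$ to kill the Tor-corrections coming from the higher Ext sheaves of $\sM$ and $\sN$, shows that $\pi_* \sF$ is $\O_B$-flat of rank one, hence a line bundle on $B$. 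Triviality of $\mathrm{Pic}(B)$ then gives $\pi_* \sF \cong \O_B$.

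A nowhere-vanishing section of $\O_B$ produces $s \in \Hom_{\tY}(\sM, \sN)$ whose restriction to each fibre is a non-zero element of the one-dimensional space $\Hom(\sM|_{\tY_p}, \sN|_{\tY_p})$. Since $\sM|_{\tY_p} \cong \sN|_{\tY_p}$ and $\End(\sN|_{\tY_p}) = \k$, Schur's lemma makes every such non-zero map an isomorphism, so $s$ is fibrewise an isomorphism. Then $\mathrm{coker}(s)$ has empty fibrewise support and therefore vanishes by properness of $\pi$, while $B$-flatness of $\sN$ keeps $0 \to \ker(s)|_{\tY_p} \to \sM|_{\tY_p} \to \sN|_{\tY_p} \to 0$ exact on each fibre, so the same argument yields $\ker(s) = 0$. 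I expect the main obstacle to be the middle paragraph: ensuring that $\pi_* \sHom(\sM, \sN)$ is literally a line bundle rather than a coherent sheaf whose fibres merely happen to be one-dimensional. The bound $\dim B \le 2$ is precisely what controls the Tor contributions that could otherwise distort this identification.
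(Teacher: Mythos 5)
Your proposal is correct and follows essentially the same route as the paper: identify $\pi_*\sHom(\sM,\sN)$ as a line bundle via base change and a spectral-sequence argument exploiting $\dim B \le 2$, trivialize it using $\mathrm{Pic}(B) = 0$, and check the resulting section is a fibrewise (hence global) isomorphism. The only cosmetic difference is that you finish by analyzing $\ker$ and $\mathrm{coker}$ separately (using flatness of $\sN$), whereas the paper takes the cone of the section and applies Nakayama once; and you are slightly more explicit than the paper in first establishing that $\sM$ is itself a flat sheaf, which is implicitly used in the base-change step.
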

\begin{proof}
Since $\sM$ is a sheaf when restricted to any fibre $\tY_p$ it must be a sheaf on $\tY$. Now consider $\sA := \sHom(\sM,\sN) \in D(B)$ where the $\sHom$ is taken relative to the base (another way of saying this is $\pi_*(\sM^\vee \otimes \sN) \in D(B)$). By base change we know
$$\sA|_p \cong \Hom(\sM|_{\tY_p},\sN|_{\tY_p}) \cong \End^*(\sN|_{\tY_p})$$
which is a complex, supported in positive degrees, with the degree zero piece being one dimensional.

Now, using that $\dim(B) \le 2$, the spectral sequence computing $\sA|_p$ from $\sH^i(\sA)|_p$ shows that $\sH^0(\sA)|_p$ is supported in degree zero. This means that $\sH^0(\sA) = \sHom^0(\sM,\sN)$ is flat over $B$.

Any flat sheaf over a local ring is free which implies that a flat sheaf over any Noetherian scheme is locally free. This means that $\sH^0(\sA)$ is a vector bundle. Moreover, since $\dim H^0(\sA|_p) = 1$ this vector bundle is one dimensional. Finally, since $Pic(B)$ is trivial we must have $\sH^0(\sA) \cong \O_B$. This means that there is a section $f: \sM \rightarrow \sN$ which, when restricted to any point $p \in B$ induces an isomorphism. It follows that $\Cone(f)$ when restricted to any $\tY_p$ is zero. Thus $\Cone(f)=0$ and hence $f$ is an isomorphism.
\end{proof}

\subsection{Proof of Theorem \ref{thm:main2}}

Restricting $\Psi(K)_{\C^r} \in D^{\k^\times}(\k^r)$ to the line through $\underline{0} = (0,\dots,0)$ and $\uw = (w_1, \dots, w_r)$ gives us an object $\Psi(K)_{\C} \in D^{\k^\times}(\k)$. The fibre of $\Psi(K)_{\C}$ over zero is isomorphic to $\Psi(K)_{\underline{0}}$ whereas the general fibre is isomorphic to $\Psi(K)_\uw \cong \Psi(K_1)_{w_1} \otimes \dots \otimes \Psi(K_r)_{w_r}$.

Standard arguments now give us the required spectral sequence. More precisely, we can write $\Psi(K)_\C$ as a complex where the terms are free $\k[x]$-modules and the maps are $\k^\times$-equivariant. Then we can filter this complex with respect to the standard filtration on $\k[x]$ by degree. The associated graded is then isomorphic to $\Psi(K)_{\underline{0}}$ which gives us the spectral sequence.

\appendix

\section{Equivalences}\label{sec2}

In this section, to simplify notation, we will restrict to the case when we have two strands. In this case we have the variety
$$Z(k,l) := Z(k,l)^1_{(0,0)} = \{ L_0 \overset{k}{\underset{l}{\rightrightarrows}} \begin{matrix} L_1 \\ L_1' \end{matrix} \overset{l}{\underset{k}{\rightrightarrows}} L_2 : zL_2\subset L_1, zL_2 \subset L_1', zL_1 \subset L_0, zL_1' \subset L_0\} \subset Y(k,l) \times Y(l,k).$$
In \cite{CKL1}, we used the theory of geometric categorical $ \sl_2 $ actions to construct a natural equivalence $\T(k,l): D^{\C^\times}(Y(k,l)) \xrightarrow{\sim} D^{\C^\times}(Y(l,k))$ induced by a kernel $\sT(k,l)$. The purpose of this section is to describe this kernel more explicitly in terms of $Z(k,l)$. We follow the argument from \cite{C1} where this was done for $T^* \G(k,m)$ which is an open subset of $Y(k,m-k)$.

\subsection{The varieties $Z_s(k,l)$}

For simplicity we suppose $k \le l$. Following earlier notation we denote $N = \min(k+l,m)$. The variety $Z(k,l)$ contains $N-l+1$ components given by
$$Z_s(k,l) = \{(L_0,L_1,L_1',L_2) \in Z(k,l): \dim (\ker z|_{L_2/L_0}) \ge l+s \text{ and } \dim ((L_1 \cap L_1')/L_0) \ge k-s \}.$$
We define the open subscheme
$$Z^o(k,l) := \{(L_0,L_1,L_1',L_2) \in Z(k,l): \dim (\ker z|_{L_2/L_0}) + \dim ((L_1 \cap L_1')/L_0) \le k+l+1 \} \subset Z(k,l)$$
and $Z^o_s(k,l) := Z_s(k,l) \cap Z^o(k,l)$. Abusing notation slightly, we denote all the open inclusions $j: Z^o_s(k,l) \rightarrow Z_s(k,l)$.

\begin{Lemma}\label{lem:Z^o}
Each $Z_s(k,l)$ has a natural partial resolution
$$Z'_s(k,l) := \{ L_0 \xrightarrow{k-s} W_1 \overset{s}{\underset{l-k+s}{\rightrightarrows}} \begin{matrix} L_1 \\ L_1' \end{matrix} \overset{l-k}{\underset{k}{\rightrightarrows}} L_2 : zL_2 \subset W_1, zL_1 \subset L_0, zL_1' \subset L_0 \} \xrightarrow{\pi_s} Z_s(k,l)$$
where $\pi_s$ forgets $W_1$. $Z_s^o(k,l)$ is smooth and the restriction of $\pi_s$ to the preimage of $Z_s^o(k,l)$ is an isomorphism. The complements of $Z_s^o(k,l)$ in $Z'_s(k,l)$ and $Z_s(k,l)$ have codimension at least $3$ and $4$ repectively.
\end{Lemma}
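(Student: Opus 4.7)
The plan is to analyze the fibers of $\pi_s$, deduce the isomorphism over $Z^o_s(k,l)$ together with the smoothness of $Z^o_s(k,l)$, and finally bound the codimensions of the bad loci via stratification by jump data.

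First I would compute the fibers of $\pi_s$. Given a point $(L_0,L_1,L_1',L_2) \in Z_s(k,l)$, any $W_1$ in the fiber of $\pi_s$ must satisfy $L_0 + zL_2 \subset W_1 \subset L_1 \cap L_1'$ with $\dim W_1/L_0 = k-s$ (the containment $zW_1 \subset L_0$ is automatic from $W_1 \subset L_1 \subset z^{-1}L_0$, and the lower bound $L_0+zL_2 \subset W_1$ comes from $zL_2 \subset W_1$ together with $L_0 \subset W_1$). Introduce the nonnegative jump data $a := \dim(L_1\cap L_1')/L_0 - (k-s)$ and $b := (k-s) - \dim(L_0+zL_2)/L_0$; nonnegativity of $b$ follows from $\dim \ker z|_{L_2/L_0} \ge l+s$, which holds on $Z_s(k,l)$. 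The fiber is then the Grassmannian $\G(b, a+b)$, of dimension $ab$. Since $Z^o_s(k,l)$ is cut out by the condition $a + b \le 1$, the fiber over $Z^o_s(k,l)$ is a single reduced point, so $\pi_s$ is a set-theoretic bijection there.

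Next I would establish smoothness of $Z^o_s(k,l)$ and upgrade to the scheme-theoretic isomorphism. Setting $V = L_2/L_0$, the subspaces $\bar L_1 := L_1/\mathrm{Im}(z|_V)$ and $\bar L_1' := L_1'/\mathrm{Im}(z|_V)$ sit inside $K := \ker z|_V / \mathrm{Im}(z|_V)$ as subspaces of dimensions $s+b$ and $l-k+s+b$, with $\dim K = l-k+2s+2b$. On the open stratum $(0,0)$ they are complementary in $K$; on $(1,0)$ and $(0,1)$ they deviate from complementary by a single dimension. A tangent-space calculation at each of the three types of points establishes smoothness of $Z^o_s(k,l)$. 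Combined with set-theoretic bijectivity of $\pi_s$ (and smoothness of the preimage, which is a small modification that fixes $W_1$ uniquely), this yields the scheme-theoretic isomorphism $\pi_s^{-1}(Z^o_s(k,l)) \xrightarrow{\sim} Z^o_s(k,l)$.

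Finally, the codimension bounds. I stratify both $Z_s(k,l)$ and $Z'_s(k,l)$ by the pair $(a,b)$. The $(a,b)$ stratum in $Z_s(k,l)$ requires two essentially independent conditions: the Jordan type of $z|_V$ (with $z^2=0$) must transition from $(2^{k-s}, 1^{l-k+2s})$ to $(2^{k-s-b}, 1^{l-k+2s+2b})$, a nilpotent-orbit-closure codimension jump of order $b^2$; and the intersection $\bar L_1 \cap \bar L_1'$ must jump to dimension $a+b$ in $K$, a Schubert condition of codimension $(a+b)^2$ in the product of Grassmannians containing $\bar L_1,\bar L_1'$. Combining these and subtracting the fiber dimension $ab$, I expect the stratum to have codimension at least $4$ in $Z_s(k,l)$ and at least $3$ in $Z'_s(k,l)$ whenever $a+b \ge 2$. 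The main obstacle will be the bookkeeping of this codimension count: the two jumps interact (the ambient dimension of the Schubert condition depends on $b$), so verifying the bounds requires a careful case analysis, following the template of the analogous argument for $T^*\G(k,m) \subset Y(k,m-k)$ carried out in \cite{C1}.
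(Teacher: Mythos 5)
Your setup is correct and clean: for a point of $Z_s(k,l)$, any admissible $W_1$ must satisfy $L_0 + zL_2 \subset W_1 \subset L_1 \cap L_1'$ with $\dim W_1/L_0 = k-s$, the jump coordinates $a,b$ you define satisfy $a+b = \dim(\ker z|_{L_2/L_0}) + \dim((L_1\cap L_1')/L_0) - (k+l)$ so that the open condition is exactly $a+b\le 1$, and the fibre of $\pi_s$ over the $(a,b)$-stratum is indeed $\G(b,a+b)$. This is a nice repackaging of the implicit bookkeeping in the paper.

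However, the two substantive claims of the lemma are left as sketches, and here you are missing the idea that makes the paper's proof work. For smoothness you propose ``a tangent-space calculation at each of the three types of points,'' but you do not carry it out, and your route to the scheme-theoretic isomorphism is circular: you invoke ``smoothness of the preimage'' to upgrade the set-theoretic bijection, yet the smoothness of that preimage is exactly what is at issue (it is not a priori clear that $Z'_s(k,l)$, which adds only $W_1$, is smooth). The paper avoids this entirely by passing to the \emph{bigger} resolution $Z''_s(k,l)$ that also records $W_2$; this is manifestly smooth as an iterated Grassmannian bundle, and over $Z^o_s(k,l)$ one writes down an explicit inverse morphism (in one case $W_2 = \ker z|_{L_2/L_0}$, $W_1 = \operatorname{im} z|_{L_2/L_0}$; in the other $W_1 = L_1\cap L_1'$, $W_2 = \operatorname{span}(L_1,L_1')$), so $\pi$ is an isomorphism there and $Z^o_s(k,l)$ inherits smoothness from $Z''_s(k,l)$. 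Without some version of this you have not proved smoothness. Similarly, for the codimension bounds you state a heuristic (``nilpotent-orbit jump of order $b^2$,'' ``Schubert codimension $(a+b)^2$,'' subtract $ab$) and then acknowledge that ``verifying the bounds requires a careful case analysis'' which you do not perform; the paper carries this out by exhibiting explicit Grassmannian-bundle resolutions of the three boundary strata $a\ge 2$, $a,b\ge 1$, $b\ge 2$ and computing their dimensions outright (and, in $Z_s(k,l)$ itself, using a slightly different resolution of the middle stratum to gain the extra codimension). As written, the proposal identifies the correct stratification but does not establish either the smoothness of $Z^o_s(k,l)$ or the codimension estimates, which are the actual content of the lemma.
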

\begin{proof}
$Z_s(k,l)$ has a natural resolution given by
$$Z''_s(k,l) := \{ L_0 \xrightarrow{k-s} W_1 \overset{s}{\underset{l-k+s}{\rightrightarrows}} \begin{matrix} L_1 \\ L_1' \end{matrix} \overset{l-k+s}{\underset{s}{\rightrightarrows}} W_2 \xrightarrow{k-s} L_2 : zL_2 \subset W_1, z W_2 \subset L_0\}.$$
Now $Z_s^o(k,l) \subset Z_s(k,l)$ is defined by the open condition
$$\dim (\ker z|_{L_2/L_0}) + \dim ((L_1 \cap L_1')/L_0) \le k+l+1.$$
Since $\dim (\ker z|_{L_2/L_0}) \ge l+s$ and $\dim ((L_1 \cap L_1')/L_0) \ge k-s$ there are two cases: either $\dim (\ker z|_{L_2/L_0}) = l+s$ or $\dim ((L_1 \cap L_1')/L_0) = k-s$. In the first case we recover $W_2$ as the kernel of $z$ and $W_1$ as the image of $z$ while in the second case we recover $W_1$ as $L_1 \cap L_1'$ and $W_2$ as $\spn(L_1,L_1')$. Thus $\pi$ is an isomorphism over $Z_s^o(k,l)$ (and in particular, this means $Z_s^o(k,l)$ is smooth).

Now, the complement $Z'_s(k,l) \setminus Z_s^o(k,l)$ is covered by three pieces:
\begin{itemize}
\item $\dim ((L_1 \cap L_1')/L_0) \ge k-s+2$ (where $s \ge 2$)
\item $\dim ((L_1 \cap L_1')/L_0) \ge k-s+1$ and $\dim (\ker z|_{L_2/L_0}) \ge l+s+1$ (where $s \ge 1$)
\item $\dim (z|_{L_2/L_0}) \ge l+s+2$ (where $s \le k-1$).
\end{itemize}
The dimension of the first piece can be computed as the dimension of its resolution
$$\{ L_0 \xrightarrow{k-s} W_1 \xrightarrow{2} W_1' \overset{s-2}{\underset{l-k+s-2}{\rightrightarrows}} \begin{matrix} L_1 \\ L_1' \end{matrix} \overset{l-k+s}{\underset{s}{\rightrightarrows}} W_2 \xrightarrow{k-s} L_2 : zL_2 \subset W_1, zW_2 \subset L_0 \}.$$
This variety is a sequence of iterated Grassmannian bundle and its dimension can be calculated to be $k(m-k)+l(m-l)-4$. The dimension of $Z_s(k,l)$ is $k(m-k)+l(m-l)$ so the codimension of the first piece is $4$.

The codimensions of the second and third pieces are computed similarly. For the second piece we use the resolution
$$\{ L_0 \xrightarrow{k-s} W_1 \xrightarrow{1} W_1' \overset{s-1}{\underset{l-k+s-1}{\rightrightarrows}} \begin{matrix} L_1 \\ L_1' \end{matrix} \overset{l-k+s+1}{\underset{s+1}{\rightrightarrows}} W_2 \xrightarrow{k-s-1} L_2 : zL_2 \subset W_1, zW_2 \subset L_0 \}$$
which has dimension $k(m-k)+l(m-l)-3$ (codimension $3$).

For the third piece we use the resolution
$$\{ L_0 \xrightarrow{k-s} W_1 \overset{s}{\underset{l-k+s}{\rightrightarrows}} \begin{matrix} L_1 \\ L_1' \end{matrix} \overset{l-k+s+2}{\underset{s+2}{\rightrightarrows}} W_2 \xrightarrow{k-s+2} L_2 : zL_2 \subset W_1, zW_2 \subset L_0 \}$$
which has dimension $k(m-k)+l(m-l)-4$ again.

To show that the codimension of $Z_s(k,l) \setminus Z_s^o(k,l) \subset Z_s(k,l)$ is at least four the same argument as above works except in the second case. There one needs to use the resolution
$$\{L_0 \xrightarrow{k-s+1} W_1 \overset{s-1}{\underset{l-k+s-1}{\rightrightarrows}} \begin{matrix} L_1 \\ L_1' \end{matrix} \overset{l-k+s+1}{\underset{s+1}{\rightrightarrows}} W_2 \xrightarrow{k-s-1} L_2 : zL_2 \subset W_1, zW_2 \subset L_0 \}$$
which again has dimension $k(m-k)+l(m-l)-4$.
\end{proof}

\begin{Corollary}\label{cor:Z^o}
$\pi_{s*} \O_{Z'_s(k,l)} \cong j_* \O_{Z^o_s(k,l)}$.
\end{Corollary}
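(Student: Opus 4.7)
The plan is to combine Lemma \ref{lem:Z^o} with base change on the Cartesian square and a Hartogs-style extension. Write $\tilde{U} := \pi_s^{-1}(Z^o_s(k,l))$ and let $\tilde{\iota}: \tilde{U} \hookrightarrow Z'_s(k,l)$ denote the open embedding; Lemma \ref{lem:Z^o} gives a Cartesian square whose left vertical map $\pi_s|_{\tilde U}: \tilde{U} \xrightarrow{\sim} Z^o_s(k,l)$ is an isomorphism. The smoothness of $Z'_s(k,l)$ (iterated Grassmannian bundle), the codim-$\ge 3$ estimate for $Z'_s(k,l) \setminus \tilde{U}$ upstairs, and the codim-$\ge 4$ estimate for $Z_s(k,l) \setminus Z^o_s(k,l)$ downstairs are the engine of the argument.

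The proof proceeds in two steps. First, we show that $R^i\pi_{s*}\O_{Z'_s(k,l)} = 0$ for $i > 0$, reducing the left-hand side to an underived pushforward. The fibres of $\pi_s$ (choices of $W_1$ with $L_0 + zL_2 \subset W_1 \subset L_1 \cap L_1'$ of the prescribed dimension) are Grassmannians, whose structure sheaves have vanishing higher cohomology; combined with the iterated Grassmannian-bundle description of $Z'_s(k,l)$ from the proof of Lemma \ref{lem:Z^o}, we factor $\pi_s$ through Grassmannian-bundle projections and compute $\pi_{s*}$ stage by stage, invoking the standard vanishing $H^{>0}(\O_{\mathbb{G}}) = 0$ at each step.

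Second, we identify this underived pushforward with $j_*\O_{Z^o_s(k,l)}$. Flat base change along the Cartesian square gives $j^*\pi_{s*}\O_{Z'_s(k,l)} \cong \O_{Z^o_s(k,l)}$, so we obtain a natural unit map $\pi_{s*}\O_{Z'_s(k,l)} \to j_*\O_{Z^o_s(k,l)}$. Its kernel vanishes because $\pi_{s*}\O_{Z'_s(k,l)}$ is torsion-free (direct image of $\O$ from a smooth variety under a proper generically finite map), and its cokernel vanishes via the local cohomology estimate $H^i_{Z'_s(k,l) \setminus \tilde{U}}(\O_{Z'_s(k,l)}) = 0$ for $i < 3$, which follows from the smoothness of $Z'_s(k,l)$ and the codim-$\ge 3$ bound.

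The main obstacle is the higher direct image vanishing in step one. Because the fibre dimensions of $\pi_s$ jump across the bad strata, semicontinuity is not immediately applicable; the iterated Grassmannian-bundle presentation of $Z'_s(k,l)$ is essential for reducing the computation to a sequence of well-understood Grassmannian-bundle projections. This parallels the strategy used for the $T^*\mathbb{G}(k,m)$ case in \cite{C1}, which the present appendix generalises.
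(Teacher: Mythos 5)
Your decomposition of the statement into (a) $R^{>0}\pi_{s*}\O_{Z'_s(k,l)}=0$ and (b) $R^0\pi_{s*}\O_{Z'_s(k,l)}\cong j_*\O_{Z^o_s(k,l)}$ is the right one (recall that $j_*$ is underived here, so both parts are genuinely needed), and your treatment of (b) is sound: torsion-freeness of $R^0\pi_{s*}\O_{Z'_s(k,l)}$ kills the kernel of the unit map, and extending sections across the codimension $\ge 2$ complement inside the smooth (hence normal) $Z'_s(k,l)$ kills the cokernel. The gap is in (a). The map $\pi_s$ does \emph{not} factor through Grassmannian-bundle projections: it is an isomorphism over $Z^o_s(k,l)$ and acquires positive-dimensional Grassmannian fibres only over the boundary strata, so its fibre dimension jumps and it cannot be a composition of (smooth, constant relative dimension) Grassmannian bundles. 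The iterated Grassmannian-bundle structure appearing in the proof of Lemma \ref{lem:Z^o} describes $Z'_s(k,l)$ and its relatives as absolute varieties, built by forgetting $L_2$, $W_2$, etc.\ over simpler flag-type varieties; it is not a description of $Z'_s(k,l)$ relative to $Z_s(k,l)$, so it yields no stage-by-stage computation of $\pi_{s*}$. Even the weaker idea ``the fibres are Grassmannians and $H^{>0}(\G,\O_{\G})=0$, hence the higher direct images vanish'' does not follow as stated: since $\pi_s$ is not flat (precisely because the fibre dimension jumps), one cannot invoke cohomology and base change, and the theorem on formal functions would require vanishing of $H^{>0}$ of all infinitesimal thickenings of the fibres --- i.e.\ of the symmetric powers of the conormal sheaf restricted to the Grassmannian fibres --- not just of $\O_{\G}$.

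A symptom of the problem is that your step one makes no use of the codimension bounds ($\ge 3$ upstairs, $\ge 4$ downstairs) that Lemma \ref{lem:Z^o} is at pains to establish and from which the paper asserts the corollary follows. Those bounds are exactly the kind of input needed to compare $R\pi_{s*}\O_{Z'_s(k,l)}$ with $Rj_*\O_{Z^o_s(k,l)}$ in the relevant cohomological degrees, e.g.\ by pushing forward the triangle $R\Gamma_{Z'_s(k,l)\setminus\tilde U}(\O_{Z'_s(k,l)})\to\O_{Z'_s(k,l)}\to R\tilde\iota_*\O_{\tilde U}$, whose first term sits in degrees $\ge 3$; the paper itself outsources the remaining details to \cite[Cor.~3.2]{C1}. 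To repair your argument you should either reproduce that argument or supply a genuine relative description of $\pi_s$ (for instance via the resolution $Z''_s(k,l)$) that actually justifies the vanishing of the higher direct images.
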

\begin{proof}
This is an immediate consequence of Lemma \ref{lem:Z^o} (see \cite[Cor. 3.2]{C1}).
\end{proof}

\begin{Remark}\label{Rem1}
By Lemma \ref{lem:Z^o} we can identify $Z_s^o(k,l)$ with
\begin{eqnarray*}
\{ L_0 \xrightarrow{k-s} W_1 \overset{s}{\underset{l-k+s}{\rightrightarrows}} \begin{matrix} L_1 \\ L_1' \end{matrix} \overset{l-k+s}{\underset{s}{\rightrightarrows}} W_2 \xrightarrow{k-s} L_2 &:& zL_2 \subset W_1, zW_2 \subset L_0, \\
& &  \dim (\ker z|_{L_2/L_0}) + \dim ((L_1 \cap L_1')/L_0) \le k+l+1 \}
\end{eqnarray*}
where the isomorphism to $Z_s^o(k,l)$ is obtained by forgetting $W_1$ and $W_2$.
\end{Remark}

\begin{Lemma}\label{lem:intersections}
Using the identification of $Z_s^o(k,l)$ from Remark \ref{Rem1}, the intersection
\begin{align*}
D^o_{s,+} &:=
Z_s^o(k,l) \cap Z_{s+1}(k,l) \\
&= \{(L_\cdot,L'_\cdot) \in Z_s^o(k,l): \dim (\ker z|_{L_2/L_0}) = l+s+1 \text{ and } \dim ((L_1 \cap L_1')/L_0) = k-s \}
\end{align*}
is a divisor in $Z_s^o(k,l)$ which is the locus where $z: L_2/W_2 \rightarrow L_1/L_0 \{2\}$ is not an isomorphism. Thus
\begin{equation}\label{eq:A}
\O_{Z_s^o(k,l)}([D^o_{s,+}]) \cong \O_{Z_s^o(k,l)} \otimes \det(L_2/W_2)^\vee \otimes \det(W_1/L_0) \{2(k-s)\}.
\end{equation}
Similarly, the intersection
\begin{align*}
D^o_{s,-} &:=
Z_s^o(k,l) \cap Z_{s-1}(k,l) \\
&= \{(L_\cdot,L'_\cdot) \in Z_s^o(k,l): \dim (\ker z|_{L_2/L_0}) = l+s \text{ and } \dim ((L_1 \cap L_1')/L_0) = k-s+1 \}
\end{align*}
is a divisor in $Z_s^o(k,l)$ which is the locus where $L_1/W_1 \hookrightarrow W_2/L_1'$ is not an isomorphism. Thus
\begin{equation}\label{eq:B}
\O_{Z_s^o(k,l)}([D^o_{s,-}]) \cong \O_{Z_s^o(k,l)} \otimes \det(L_1/W_1)^\vee \otimes \det(W_2/L_1').
\end{equation}
Finally, $Z_s^o(k,l) \cap Z_{s'}(k,l) = \emptyset$ if $|s-s'| > 1$.
\end{Lemma}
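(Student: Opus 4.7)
My approach is to track the two defining inequalities
$$\dim \ker z|_{L_2/L_0} \ge l+s' \quad \text{and} \quad \dim (L_1 \cap L_1')/L_0 \ge k-s'$$
that cut out $Z_{s'}(k,l)$, viewed on the open subscheme $Z^o_s(k,l)$ where the condition $\dim \ker z|_{L_2/L_0} + \dim(L_1 \cap L_1')/L_0 \le k+l+1$ forces the total excess over the minimum values $(l+s,\, k-s)$ to be at most one. This immediately yields the disjointness claim: for $|s-s'| \ge 2$, lying in $Z_{s'}$ requires an excess of at least $2$ in at least one of the two inequalities, contradicting the open condition. The same bookkeeping pins down the set-theoretic description: on $D^o_{s,+} = Z^o_s \cap Z_{s+1}$ one has $\dim \ker z|_{L_2/L_0} = l+s+1$ and $\dim(L_1 \cap L_1')/L_0 = k-s$, and symmetrically on $D^o_{s,-}$.

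Next I would identify $D^o_{s,\pm}$ as degeneracy loci of canonical morphisms between vector bundles of equal rank, using the bundles $W_1, W_2$ provided by Remark \ref{Rem1}. Since $zL_2 \subset W_1$ and $zW_2 \subset L_0$, the operator $z$ descends to a $\C^\times$-equivariant map
$$\bar z : L_2/W_2 \longrightarrow W_1/L_0\{2\} \hookrightarrow L_1/L_0\{2\}$$
between bundles of rank $k-s$; at any point of $Z^o_s(k,l)$ with $\dim \ker z|_{L_2/L_0} = l+s$, the subbundles $W_2$ and $W_1$ are precisely the kernel and image of $z$ on $L_2/L_0$ (as in the proof of Lemma \ref{lem:Z^o}), so $\bar z$ is an isomorphism there. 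Hence $\bar z$ is generically invertible, and by the dimension count of the first paragraph its degeneracy locus coincides exactly with $D^o_{s,+}$. Similarly, the inclusions $L_1 \hookrightarrow W_2$ and $W_1 \hookrightarrow L_1'$ induce a natural weight-zero map $L_1/W_1 \hookrightarrow W_2/L_1'$ of rank-$s$ bundles whose pointwise kernel is $(L_1 \cap L_1')/W_1$; its degeneracy locus is exactly $D^o_{s,-}$.

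The formulas (\ref{eq:A}) and (\ref{eq:B}) then follow from the standard fact that for a generically invertible morphism $f:E \to F$ between vector bundles of equal rank, the effective Cartier divisor $D$ cut out by $\det f$ satisfies $\O(D) \cong \det(F) \otimes \det(E)^\vee$. Applied to $\bar z$, the $\C^\times$-weight on the target produces the shift $\det(W_1/L_0\{2\}) = \det(W_1/L_0)\{2(k-s)\}$, yielding (\ref{eq:A}); the second map carries no shift and gives (\ref{eq:B}) directly. I expect the main obstacle to lie in bookkeeping rather than in any single step: one must verify carefully that $W_1$ and $W_2$ are indeed the image and kernel of $z$ at the generic point of $Z^o_s(k,l)$ (so that $\bar z$ is generically an isomorphism and its degeneracy locus is a genuine Cartier divisor of codimension one), and that the equivariant shift $\{2\}$ on the target of $\bar z$ contributes the factor $\{2(k-s)\}$ exactly once upon taking the rank-$(k-s)$ determinant.
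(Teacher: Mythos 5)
Your proposal is correct and follows the same route as the paper, which simply cites \cite[Lem.~3.4]{C1} and whose argument is exactly the degeneracy-locus computation you carry out: identify $D^o_{s,\pm}$ as the vanishing of the determinant of a generically invertible map between equal-rank bundles built from $W_1, W_2$, and read off the line bundle formula. Your remark that the map $z$ in fact lands in $W_1/L_0 \subset L_1/L_0$ (so that source and target genuinely have equal rank $k-s$) is a worthwhile clarification of the lemma's slightly loose wording, and the equivariant weight bookkeeping is handled correctly.
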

\begin{proof}
The proof is precisely the same as that of \cite[Lem. 3.4]{C1} where $L_2$ plays the role of $\C^N$.
\end{proof}

\begin{Corollary}\label{cor:intersections}
We have
$$\O_{Z_s^o(k,l)}([D^o_{s,+}] - [D^o_{s,-}]) \cong \O_{Z_s^o(k,l)} \otimes \det(L_2/L_1)^\vee \otimes \det(L_1'/L_0) \{2(k-s)\} .$$
\end{Corollary}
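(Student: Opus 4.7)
The plan is to combine the two isomorphisms (\ref{eq:A}) and (\ref{eq:B}) from Lemma \ref{lem:intersections} and simplify the resulting product of determinant line bundles using short exact sequences coming from the nested flag $L_0 \subset W_1 \subset L_1,L_1' \subset W_2 \subset L_2$ that is available on $Z_s^o(k,l)$ by virtue of Remark \ref{Rem1}.

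The first step is to tensor (\ref{eq:A}) with the inverse of (\ref{eq:B}), which gives immediately
$$\O_{Z_s^o(k,l)}([D^o_{s,+}]-[D^o_{s,-}]) \cong \det(L_2/W_2)^\vee \otimes \det(W_1/L_0) \otimes \det(L_1/W_1) \otimes \det(W_2/L_1')^\vee\{2(k-s)\}.$$
It then remains to identify the right-hand side with $\det(L_2/L_1)^\vee \otimes \det(L_1'/L_0)\{2(k-s)\}$ as a $\C^\times$-equivariant line bundle on $Z_s^o(k,l)$.

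The second step invokes four short exact sequences of tautological bundles on $Z_s^o(k,l)$. From $0 \to W_2/L_1 \to L_2/L_1 \to L_2/W_2 \to 0$ and $0 \to W_1/L_0 \to L_1'/L_0 \to L_1'/W_1 \to 0$ one obtains
$$\det(L_2/L_1)^\vee \cong \det(W_2/L_1)^\vee \otimes \det(L_2/W_2)^\vee, \qquad \det(L_1'/L_0) \cong \det(W_1/L_0) \otimes \det(L_1'/W_1).$$
From the two filtrations $0 \to L_1/W_1 \to W_2/W_1 \to W_2/L_1 \to 0$ and $0 \to L_1'/W_1 \to W_2/W_1 \to W_2/L_1' \to 0$ of the common bundle $W_2/W_1$ one obtains the key identity
$$\det(L_1/W_1) \otimes \det(W_2/L_1) \cong \det(W_2/W_1) \cong \det(L_1'/W_1) \otimes \det(W_2/L_1').$$
Substituting these determinant identities into the expression from the first step and performing the obvious cancellations yields the claimed formula.

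This reduces the corollary to a bookkeeping exercise in determinants on a flag of vector bundles, and I do not expect any substantive obstacle. The only point requiring care is that the $\C^\times$-equivariant weight shift $\{2(k-s)\}$ coming from (\ref{eq:A}) alone must survive the cancellation unchanged, which is automatic since (\ref{eq:B}) carries no weight shift and the short exact sequences used are equivariant.
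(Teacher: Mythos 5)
Your proposal is correct and follows essentially the same route as the paper: combine (\ref{eq:A}) and (\ref{eq:B}) from Lemma \ref{lem:intersections} and verify the resulting determinant identity on $Z_s^o(k,l)$. The paper simply asserts the identity $\det(L_2/W_2)^\vee \otimes \det(W_1/L_0) \otimes \det(L_1/W_1) \otimes \det(W_2/L_1')^\vee \cong \det(L_2/L_1)^\vee \otimes \det(L_1'/L_0)$, whereas you spell out the short exact sequences (equivalently, the two filtrations of $W_2/W_1$) that prove it --- a fuller version of the same argument.
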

\begin{proof}
This is a direct consequence of Lemma \ref{lem:intersections} since
$$\det(L_2/W_2)^\vee \otimes \det(W_1/L_0) \otimes \det(L_1/W_1) \otimes \det(W_2/L_1')^\vee \cong \det(L_2/L_1)^\vee \otimes \det(L_1'/L_0).$$
\end{proof}

\subsection{The kernel $\sT(k,l)$}

Following \cite{CKL1} we consider the complex $\Theta_{N-l} \rightarrow \Theta_{N-l-1} \rightarrow \dots \rightarrow \Theta_1 \rightarrow \Theta_0$ where $\Theta_s = \sF^{(l-k+s)} * \sE^{(s)}[-s]\{s\}$. Here
\begin{align*}
\sE^{(s)} &= \O_{W^{s}} \otimes \det(V/L_0)^{s} \otimes \det(L_1/V)^{-k+s} \{s(k-s)\} \\
\sF^{(l-k+s)} &= \O_{W^{l-k+s}} \otimes \det(L_2/L_1')^{-l+k-s} \otimes \det(L_1'/V)^{k} \{k(l-k+s)\}
\end{align*}
and
\begin{align*}
W^s &= \{L_0 \xrightarrow{k-s} V \xrightarrow{s} L_1 \xrightarrow{l} L_2, zL_2 \subset V \text{ and } zL_1 \subset L_0 \} \\
W^{l-k+s} &= \{L_0 \xrightarrow{k-s} V \xrightarrow{l-k+s} L_1' \xrightarrow{k} L_2, zL_2 \subset V \text{ and } zL_1' \subset L_0 \}.
\end{align*}
Notice that $\sE^{(s)} \in D^{\C^\times}(Y(k,l) \times Y(k-s,l+s))$ and $\sF^{(l-k+s)} \in D^{\C^\times}(Y(k-s,l+s) \times Y(l,k))$. The complex $\Theta_\bullet$ has a unique (right) convolution ${\rm Conv}(\Theta_\bullet)$. We denote
$$\sT(k,l) := {\rm Conv}(\Theta_\bullet)\{-k\} \in D^{\C^\times}(Y(k,l) \times Y(l,k)).$$
The main results of \cite{CKL2,CKL3} show that this kernel induces an equivalence $\T(k,l)$. We will now identify $\sT(k,l)$ more explicitly.

\begin{Remark}\label{rem:kernel}
In comparison to earlier work, the kernels we use here have all been conjugated by the line bundle $\det(L_2/L_0)^{l} \otimes \det(L_1/L_0)^k$ on $Y(k,l)$. Since we try to emphasize the similarities with the results in \cite{C1} we will subsequently see the line bundle $\rho := \det(L_2/L_1)^k \otimes \det(L_2/L_1')^{-l}$ on $Y(k,l) \times Y(l,k)$ show up.
\end{Remark}

\begin{Proposition}\label{prop:kernel}
We have
$$\Theta_s \cong j_* \O_{Z_s^o(k,l)} \otimes \det(L_2/L_1)^{-s} \otimes \det(L_1'/L_0)^s \otimes \rho [-s] \{kl-(k-s)^2 + s\}$$
as a sheaf on $Y(k,l) \times Y(l,k)$.
\end{Proposition}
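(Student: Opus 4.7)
The plan is to compute the convolution $\sF^{(l-k+s)} * \sE^{(s)}$ directly by identifying both the support and the line bundle, then passing from the partial resolution $Z'_s(k,l)$ down to $Z^o_s(k,l)$ using Corollary \ref{cor:Z^o}. I will work inside the triple product $Y(k,l) \times Y(k-s,l+s) \times Y(l,k)$.

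First, observe that the scheme-theoretic intersection of $W^s \times Y(l,k)$ and $Y(k,l) \times W^{l-k+s}$ inside this triple product is exactly $Z'_s(k,l)$, with the intermediate lattice $V$ playing the role of $W_1$. The restriction of $\pi_{13}$ to $Z'_s(k,l)$ is precisely the partial resolution $\pi_s : Z'_s(k,l) \to Z_s(k,l) \hookrightarrow Y(k,l) \times Y(l,k)$ from Lemma \ref{lem:Z^o}. The main technical step—and the main obstacle—will be to verify that $\pi_{12}^*(\sE^{(s)}) \otimes^L \pi_{23}^*(\sF^{(l-k+s)})$ is concentrated in degree zero (no higher $\mathrm{Tor}$). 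I expect this to follow, as in \cite[Sec.~4]{C1}, from a local analysis of the iterated Grassmannian bundle structures: $W^s$ and $W^{l-k+s}$ are smooth, and a dimension count shows that they meet in the expected codimension inside the triple product, with the defining equations forming a regular sequence locally.

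Granting this, the convolution becomes $(\pi_s)_*(\tsL)$, where $\tsL$ is the product of the two line bundles pulled back from $\sE^{(s)}$ and $\sF^{(l-k+s)}$, namely
$$\tsL = \det(V/L_0)^{s} \otimes \det(L_1/V)^{-k+s} \otimes \det(L_2/L_1')^{-l+k-s} \otimes \det(L_1'/V)^{k} \{s(k-s) + k(l-k+s)\}.$$
The key observation is that $\tsL$ descends to $Z_s(k,l)$: substituting $\det(L_1/V) \cong \det(L_1/L_0) \otimes \det(V/L_0)^{-1}$ and $\det(L_1'/V) \cong \det(L_1'/L_0) \otimes \det(V/L_0)^{-1}$, the total exponent of $\det(V/L_0)$ is $s + (k-s) + (-k) = 0$, so the $V$-dependence cancels and
$$\tsL \cong \det(L_1/L_0)^{s-k} \otimes \det(L_2/L_1')^{k-l-s} \otimes \det(L_1'/L_0)^{k} \{s(k-s) + k(l-k+s)\}.$$
Applying the projection formula together with Corollary \ref{cor:Z^o} then gives $\sF^{(l-k+s)} * \sE^{(s)} \cong j_* \O_{Z^o_s(k,l)} \otimes \tsL$.

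Finally, a routine check using the identities $\det(L_2/L_0) = \det(L_1/L_0) \otimes \det(L_2/L_1) = \det(L_1'/L_0) \otimes \det(L_2/L_1')$ shows that the descended line bundle above agrees with $\det(L_2/L_1)^{-s} \otimes \det(L_1'/L_0)^{s} \otimes \rho$, since their ratio telescopes to $\det(L_2/L_0)^{s-k} \otimes \det(L_2/L_0)^{k-s} \cong \O$. The grading shift works out to $s(k-s) + k(l-k+s) + s = kl - (k-s)^2 + s$, where the extra $\{s\}$ and $[-s]$ come from the $[-s]\{s\}$ in the definition $\Theta_s = \sF^{(l-k+s)} * \sE^{(s)}[-s]\{s\}$, yielding exactly the formula in the statement.
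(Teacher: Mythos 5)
Your proposal is correct and follows essentially the same route as the paper's proof: realize $\sF^{(l-k+s)} * \sE^{(s)}$ as $\pi_{13*}$ of a sheaf supported on $Z'_s(k,l)$ via a Tor-independence/expected-dimension argument (both you and the paper defer this to \cite{C1}), check that the twisting line bundle is $V$-independent and hence descends, and then invoke Corollary \ref{cor:Z^o} together with the projection formula. Your explicit verification of the line bundle cancellation (the telescoping via $\det(L_2/L_0)$) and the grading arithmetic $s(k-s)+k(l-k+s)+s = kl-(k-s)^2+s$ both check out.
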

\begin{proof}
Ignoring the grading shift we have
$$\Theta_s \cong \pi_{13*}(\pi_{12}^* \O_{W^s} \otimes \pi_{23}^* \O_{W^{l-k+s}} \otimes \det(V/L_0)^{s} \otimes \det(L_1/V)^{-k+s} \otimes \det(L_2/L_1')^{-l+k-s} \otimes \det(L_1'/V)^k)$$
where $\pi_{13}$ forgets $V$. One can check as in the proof of \cite[Prop. 3.7]{C1} that $\pi_{12}^{-1}(W^s)$ and $\pi_{23}^{-1}(W^{l-k+s})$ intersect in the expected dimension to give $Z_s'(k,l)$. Thus we get
$$\Theta_s \cong \pi_{13*}(\O_{Z_s'(k,l)} \otimes \det(L_2/L_1)^{-s} \otimes \det(L_1'/L_0)^{s} \otimes \rho) \cong j_* \O_{Z_s^o(k,l)} \otimes \det(L_2/L_1)^{-s} \otimes \det(L_1'/L_0)^{s} \otimes \rho.$$
Finally, the grading shift is equal to $s(k-s)+k(l-k+s)+s = kl - (k-s)^2 + s$.
\end{proof}

\begin{Proposition}\label{prop:equivalence}
We have $\sT(k,l) \cong j_* \sL(k,l)$ where $\sL(k,l)$ is the line bundle on $Z^o(k,l)$ uniquely determined by the restrictions
$$\sL(k,l)|_{Z^o_s(k,l)} \cong \O_{Z^o_s(k,l)}([D^o_{s,+}]) \otimes \det(L_2/L_1)^{-s} \otimes \det(L_1'/L_0)^{s} \otimes \rho \{kl - (k-s)(k-s-1)\}$$
and $j$ is the open inclusion $Z^o(k,l) \xrightarrow{j} Z(k,l)$.
\end{Proposition}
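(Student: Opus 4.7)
The plan is to adapt the argument of \cite{C1}, which established the analogous statement in the special case $k+l=m$ (so that $Y(k,l)$ contains $T^*\G(k,m)$ as an open subset). The combinatorial data---the components $Z_s^o(k,l)$ of $Z^o$, their pairwise intersections $D_{s,\pm}^o$, and the relevant tautological line bundles---has been set up in Lemmas \ref{lem:Z^o}, \ref{lem:intersections}, Corollary \ref{cor:intersections}, and Proposition \ref{prop:kernel} to mirror exactly the setup of \cite{C1}, with the tautological quotient $L_2/L_0$ now playing the role of the ambient vector space $\C^N$ there. Consequently, the proof carries over with few modifications.

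First, I would describe the differentials $d_s \colon \Theta_s \to \Theta_{s-1}$ of the complex $\Theta_\bullet$ explicitly. These arise from the natural transformations of the categorical $\sl_2$-action (the unit and counit of the $(\sE,\sF)$-adjunctions) used in \cite{CKL1} to build the Rickard complex. Under the identification of Proposition \ref{prop:kernel}, each $d_s$ becomes a map between two pushforwards of line bundles from smooth open subschemes of $Z^o$, which is unique up to scalar. As in \cite[Sec.~3]{C1}, this map is canonically given by multiplication by a section vanishing along the common divisor $D_{s-1,+}^o = D_{s,-}^o$ inside $Z^o(k,l)$.

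Next, I would compute the convolution inductively. Set $Z_{\le s}^o := Z_0^o(k,l) \cup \cdots \cup Z_s^o(k,l)$ and let $\sT_{\le s}$ denote the convolution of the truncated complex $\Theta_s \to \Theta_{s-1} \to \cdots \to \Theta_0$. I would prove by induction on $s$ that $\sT_{\le s} \cong j_* \sL_{\le s}$ for a line bundle $\sL_{\le s}$ on $Z_{\le s}^o$ whose restriction to each component $Z_r^o$ (for $r \le s$) matches the formula stated in the proposition, up to the overall $\{-k\}$ shift. The inductive step fits $\sT_{\le s}$ into a distinguished triangle with $\sT_{\le s-1}$ and a shifted twist of $\Theta_s$, and then identifies the outcome as $j_*\sL_{\le s}$ via the short exact sequence
\[
0 \to j_* \sL_{\le s-1} \to j_* \sL_{\le s} \to j_* \bigl(\sL_{\le s}|_{Z_s^o}(-D_{s,-}^o)\bigr) \to 0
\]
on the reducible scheme $Z_{\le s}^o$, together with the description of $d_s$ above. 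The base case $s=0$ is immediate from Proposition \ref{prop:kernel}.

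The main obstacle is twofold. First, one must verify that the prescribed restrictions glue to an honest line bundle on the reducible scheme $Z_{\le s}^o$: on each intersection $D_{r,+}^o = D_{r+1,-}^o$ the two restrictions must agree. Using the adjunction formula on these divisors, this compatibility reduces to Corollary \ref{cor:intersections}, which re-expresses the difference $[D_{s,+}^o] - [D_{s,-}^o]$ in terms of determinant line bundles of the tautological quotients, and these determinant bundles extend across all components since they are pullbacks from $Y(k,l) \times Y(l,k)$. Second, one must keep careful track of the cohomological and $\C^\times$-equivariant grading shifts; these are governed by the shifts recorded in Proposition \ref{prop:kernel}, the twist $\{2(k-s)\}$ from equation (\ref{eq:A}) of Lemma \ref{lem:intersections} that accounts for the $\O_{Z_s^o}([D_{s,+}^o])$ factor, and the overall $\{-k\}$ in the definition $\sT(k,l) := \mathrm{Conv}(\Theta_\bullet)\{-k\}$. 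Once the gluing and shifts are verified, the inductive conclusion at $s = N-l$ together with the identity $Z^o_{\le N-l} = Z^o(k,l)$ yields $\sT(k,l) \cong j_* \sL(k,l)$ as claimed.
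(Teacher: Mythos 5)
Your proposal is correct and follows essentially the same route as the paper, whose proof is just a citation to \cite[Thm.~3.8]{C1}: you have reconstructed that inductive convolution argument, which the preparatory results (Lemmas \ref{lem:Z^o}, \ref{lem:intersections}, Corollary \ref{cor:intersections}, Proposition \ref{prop:kernel}) were set up precisely to enable. One small correction: since $Z^o_{\le s-1}$ is a \emph{closed} subscheme of $Z^o_{\le s}$, the restriction $\sL_{\le s}\to \sL_{\le s-1}$ is the surjection, so the short exact sequence should read $0\to j_*\bigl(\sL_{\le s}|_{Z^o_s}(-D^o_{s,-})\bigr)\to j_*\sL_{\le s}\to j_*\sL_{\le s-1}\to 0$; this does not affect your strategy.
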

\begin{proof}
This follows by precisely the same argument used to proof Theorem \cite[Thm. 3.8]{C1}.
\end{proof}

This proposition shows that $ \sT(k,l) $ is isomorphic to the kernel $ \sT_1(k,l)^1_\uo $ defined in the main part of the paper.

\subsection{The deformed kernel $\sT(k,l)_{\C}$}

Recall the deformation
\begin{align*}
Z(k,l)_{\C^2} := Z(k,l)_{\C^2}^1 = & \{ 0 \overset{k}{\underset{l}{\rightrightarrows}} \begin{matrix} L_1 \\ L_1' \end{matrix} \overset{l}{\underset{k}{\rightrightarrows}} L_2: (z-w_1I)|_{L_2/L_1}=0, (z-w_2I)|_{L_2/L_1'}=0 \\
& (z-w_2I)|_{L_1/L_0}=0, (z-w_1I)|_{L_1'/L_0}=0 \}.
\end{align*}
We denote by $Z(k,l)_{\C}$ its restriction to $(w,-w)$ (we restrict to this locus for convenience and because the deformation in the direction $w_1=w_2$ is trivial). Inside $Z(k,l)_\C$ we have the open subscheme $Z^o(k,l)_{\C}$ defined as the locus where
$$k+l+1 \ge \dim (\ker(z-wI)|_{L_2/L_0}) + \dim ((L_1 \cap L_1')/L_0).$$
Notice that $Z^o(k,l)_{\C}$ contains all the fibres of $Z(k,l)_{\C}$ over any $w \ne 0$ since in this case we have $L_1 \cap L_1' = L_0$.

\begin{Remark}
The scheme $Z(k,l)_{\C}$ is now irreducible. The subschemes $Z_s(k,l)$ are divisors though not necessarily Cartier. However, things are much nicer inside $Z^o(k,l)_{\C}$ where each $Z^o_s(k,l) \subset Z^o(k,l)_{\C}$ is Cartier. We denote by $j$ the inclusions $j: Z^o(k,l)_{\C} \rightarrow Z(k,l)_{\C}$.
\end{Remark}

\begin{Theorem}\label{thm:kernel2}
The kernel $\sT(k,l)_{\C} := j_* \sL(k,l)_{\C} \in D^{\C^\times}(Y(k,l)_{\C} \times_{\C} Y(l,k)_{\C})$ is invertible, where
$$\sL(k,l)_{\C} := \O_{Z^o(k,l)_{\C}}(\sum_{s=0}^{N-l} \binom{s+1}{2} [Z^o_s(k,l)]) \otimes \rho \{k(l-k-1)\}.$$
Moreover, $\sT(k,l)_{\C}$ restricts to $\sT(k,l)$ over $Y(k,l) \times Y(l,k)$, which is the fibre over $w=0$.
\end{Theorem}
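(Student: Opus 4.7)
The plan is to verify both claims---the specialization $\sT(k,l)_{\C}|_{w=0} \cong \sT(k,l)$ and the invertibility of $\sT(k,l)_{\C}$---by first analyzing each fibre and then lifting to the total space via Lemma \ref{lem:families}. For the generic fibre $w \ne 0$, the eigenspace argument of Lemma \ref{lem:graph} shows that $Z(k,l)_{\C}|_w$ is the graph of an isomorphism $Y(k,l)_{(w,-w)} \xrightarrow{\sim} Y(l,k)_{(-w,w)}$; moreover $Z^o(k,l)_{\C}|_w$ coincides with the whole graph since $L_1 \cap L_1' = L_0$ forces the inequality defining $Z^o$ to hold automatically. Hence $\sT(k,l)_{\C}|_w$ reduces to $\rho$ on a graph of an isomorphism, which is plainly invertible.

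For the central fibre, since the complement of $Z^o(k,l)_{\C}$ inside $Z(k,l)_{\C}$ has codimension at least $2$ on every fibre (by Lemma \ref{lem:Z^o}), the underived pushforward $j_*$ commutes with base change to $w=0$, reducing matters to identifying $\sL(k,l)_{\C}|_{Z^o_s(k,l)}$ with $\sL(k,l)|_{Z^o_s(k,l)}$ from Proposition \ref{prop:equivalence} on each component. The key input is the Cartier divisor relation $\sum_{s'=0}^{N-l} [Z^o_{s'}(k,l)] = \mathrm{div}(w)$ on $Z^o(k,l)_{\C}$ (using flatness over $\C$ with reduced central fibre), which upon restriction to $Z^o_s$ yields $\O([Z^o_s])|_{Z^o_s} \cong \O(-D^o_{s,-} - D^o_{s,+})$, twisted by the $\C^\times$-character encoding the weight of $w$. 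Combining this with $\O([Z^o_{s\pm 1}])|_{Z^o_s} \cong \O(D^o_{s,\mp})$ and triviality for $|s-s'|>1$ (from Lemma \ref{lem:intersections}), the weighted sum $\sum_{s'} \binom{s'+1}{2}[Z^o_{s'}]$ restricts to the divisor $(s+1)D^o_{s,+} - s D^o_{s,-}$ on $Z^o_s$. Using Corollary \ref{cor:intersections} to trade $s(D^o_{s,+}-D^o_{s,-})$ for $\det(L_2/L_1)^{-s} \otimes \det(L_1'/L_0)^s$ up to an equivariant shift, and absorbing the residual normal-bundle shift into the overall factor $\{k(l-k-1)\}$, we recover exactly the line bundle of Proposition \ref{prop:equivalence}. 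This identifies $\sT(k,l)_{\C}|_{w=0} \cong \sT(k,l)$, and the latter is invertible by Proposition \ref{prop:equivalence} together with the main results of \cite{CKL2, CKL3}.

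To promote fibrewise invertibility to global invertibility, apply Lemma \ref{lem:families} to the proper flat family $Y(l,k)_{\C} \times_\C Y(l,k)_{\C} \to \C$ with $\sN = \O_\Delta$ and $\sM = \sT(k,l)_{\C} \ast (\sT(k,l)_{\C})^R$. The diagonal sheaf $\O_\Delta$ remains a sheaf on each fibre and satisfies $\End^0(\O_{\Delta_p}) \cong \k$, and by the previous two steps we have $\sM|_p \cong \O_{\Delta_p}$ for every $p \in \C$; the lemma then gives $\sM \cong \O_\Delta$. A symmetric argument applied to $(\sT(k,l)_{\C})^L \ast \sT(k,l)_{\C}$ completes the proof of invertibility.

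The main obstacle is the central-fibre calculation. The specific coefficients $\binom{s+1}{2}$ in the definition of $\sL(k,l)_{\C}$ are engineered precisely so that the normal-bundle contribution $\binom{s+1}{2}(-D^o_{s,-}-D^o_{s,+})$ combines with the neighbouring terms $\binom{s}{2} D^o_{s,-}$ and $\binom{s+2}{2} D^o_{s,+}$ to produce the class $\O(D^o_{s,+})$ (up to a $\det$-line-bundle correction via Corollary \ref{cor:intersections}). Verifying that the resulting equivariant grading shifts then align precisely with the $\{kl-(k-s)(k-s-1)\}$ appearing in Proposition \ref{prop:equivalence} requires careful bookkeeping of how the $\C^\times$-weight of $w$ propagates through the adjunction formula on each component, and this is where the overall factor $\{k(l-k-1)\}$ does its work.
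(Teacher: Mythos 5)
Your overall strategy is sound and close in spirit to what the paper (following \cite[Thm.~4.1]{C1}) appears to do: verify fibrewise that $\sT(k,l)_\C$ restricts to known invertible kernels, then lift invertibility to the total family. The three-step structure---(i) generic fibre is a graph, (ii) central fibre recovers $\sT(k,l)$ via the divisor computation, (iii) apply Lemma \ref{lem:families} to $\sT(k,l)_\C * \sT(k,l)_\C^R$ and $\O_\Delta$---is a legitimate self-contained argument. A few points deserve more care.

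First, the claim that the underived $j_*$ commutes with restriction to $w=0$ is not automatic from codimension $\ge 2$ on fibres; one also needs suitable depth/Cohen--Macaulay hypotheses on $Z^o(k,l)_\C$ (or more precisely that the relevant local cohomology sheaves vanish), so that the exact sequence relating $j_*\sL$, $\sL$, and the complement behaves well under tensoring with $\O_{w=0}$. This is implicit in the paper's setup but should be stated, since without it the fibre identification $\sT(k,l)_\C|_{w=0}\cong\sT(k,l)$ does not follow.

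Second, and more substantively, the equivariant-shift bookkeeping you defer is exactly where the nontrivial content lies, and a careful run-through suggests a discrepancy. Using $\O_{Z^o(k,l)_\C}\bigl(\sum_s [Z^o_s]\bigr)\cong\O\{2\}$ (as in Lemma \ref{lem:L} and Prop.~\ref{prop:dualizingsheaf}) together with Corollary \ref{cor:intersections}, the restriction of $\O\bigl(\sum_{s'}\binom{s'+1}{2}[Z^o_{s'}]\bigr)\otimes\rho\{k(l-k-1)\}$ to $Z^o_s$ comes out as $\O([D^o_{s,+}])\otimes\det(L_2/L_1)^{-s}\otimes\det(L_1'/L_0)^s\otimes\rho\{kl-(k-s)(k-s+1)\}$, whereas Proposition \ref{prop:equivalence} has $\{kl-(k-s)(k-s-1)\}$, a $\{2(s-k)\}$ mismatch. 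Either one of the shifts in the paper has a sign typo, or the convention for the equivariant class of the central fibre is $\{-2\}$ rather than $\{2\}$; in any case this must be resolved, not waved at, for the central-fibre identification to hold.

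Third, applying Lemma \ref{lem:families} to $\sM=\sT(k,l)_\C*\sT(k,l)_\C^R$ is a valid move but you should record the auxiliary facts that convolution over the base commutes with restriction to a fibre (flat base change plus projection formula) and that $(\sT(k,l)_\C^R)|_p\cong(\sT(k,l)_\C|_p)^R$ (relative Serre duality behaves well for the proper family). Without these, $\sM|_p\cong\O_{\Delta_p}$ does not follow from fibrewise invertibility. Once these are in place, and together with the symmetric argument on $\sT(k,l)_\C^L*\sT(k,l)_\C$, the invertibility conclusion is correct.
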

\begin{proof}
This follows in exactly the same way as \cite[Thm. 4.1]{C1}.
\end{proof}

\subsection{Inverses}

We will now identify explicitly the left adjoints $\sT(k,l)^L$ and $\sT(k,l)_{\C}^L$.

\begin{Theorem}\label{thm:inverse}
We have $\sT(k,l)_{\C}^L \cong \sigma \bigl(j_* \sL'(k,l)_{\C} \bigr) \in D^{\C^\times}(Y(l,k)_{\C} \times_{\C} Y(k,l)_{\C})$ where
$$\sL'(k,l)_{\C} := \O_{Z^o(k,l)_{\C}}(\sum_{s=0}^{N-l} \binom{s}{2} [Z_s^o(k,l)]) \otimes \det(L_1/L_0)^l \otimes \det(L_1'/L_0)^{-k} \{k(l-k+1)\}.$$
\end{Theorem}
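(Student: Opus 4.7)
The plan is to follow the argument of \cite[Thm.~4.4]{C1} — where the analogous inverse kernel was identified for $T^*\Gr(k,m)$ — essentially verbatim, adapting all tautological bundles to the family setting. Start from the general formula
$$\sT(k,l)_\C^L \cong \sigma\bigl(\sT(k,l)_\C^\vee\bigr) \otimes \pi_1^*\omega_{Y(l,k)_\C/\C}[\dim_\C Y(l,k)_\C/\C],$$
taken relative to the base $\C$. Since $\sT(k,l)_\C = j_*\sL(k,l)_\C$ is the underived pushforward of a line bundle from the smooth open subscheme $Z^o(k,l)_\C$, the whole computation reduces to dualizing this line bundle on the smooth locus and then tracking relative dualizing contributions.

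The first key ingredient is the family version of Lemma \ref{lem:Z^o}: the complement of $Z^o(k,l)_\C$ in $Z(k,l)_\C$ has codimension at least $3$. This ensures that dualizing commutes with $j_*$ in the appropriate sense, so it suffices to compute $\sL(k,l)_\C^{-1}$ twisted by the relative dualizing sheaf of $Z^o(k,l)_\C$ inside $Y(k,l)_\C \times_\C Y(l,k)_\C$. The latter is extracted from the family version of the partial resolutions $Z'_s(k,l)$ of Lemma \ref{lem:Z^o}, which exhibit $Z^o_s(k,l)_\C$ as an iterated Grassmannian bundle, together with Corollary \ref{cor:intersections}, which trades the divisor class differences $[D^o_{s,+}] - [D^o_{s,-}]$ for determinants of tautological quotients. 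The combinatorial identity $\binom{s+1}{2} + \binom{s}{2} = s^2$, combined with the divisor contribution from the relative canonical, produces exactly the coefficient $\binom{s}{2}$ of $[Z^o_s(k,l)]$ appearing in $\sL'(k,l)_\C$. The line bundle $\rho^{-1}$ together with the canonical class of $Y(l,k)_\C/\C$ recombine, via the tautological identity $\det(L_2/L_0) = \det(L_1/L_0)\otimes\det(L_2/L_1) = \det(L_1'/L_0)\otimes \det(L_2/L_1')$ that holds on $Z^o(k,l)_\C$, into the factor $\det(L_1/L_0)^l\otimes\det(L_1'/L_0)^{-k}$. Finally, the $\C^\times$-grading shift from $\{k(l-k-1)\}$ to $\{k(l-k+1)\}$ is picked up from the $\C^\times$-weight of the relative canonical along the Grassmannian fibres of $Z^o \to Y(l,k)_\C$, where $z$ acts with weight $2$.

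The main obstacle is bookkeeping: tracking divisor multiplicities, determinant factors, and $\C^\times$-weights through every step. Since each individual step is directly parallel to one performed in \cite[Sec.~4]{C1} — merely with $\C^N$ replaced by the tautological quotient $L_2/L_0$ and everything carried out in a one-parameter family — no substantive new difficulty is expected beyond this careful accounting. The only genuinely family-specific check is that the codimension estimate of Lemma \ref{lem:Z^o} persists in the (irreducible) family $Z(k,l)_\C$, but this follows from semicontinuity of fibre dimensions together with flatness of $Z(k,l)_\C \to \C$.
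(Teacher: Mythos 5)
Your proposal takes essentially the same route as the paper's proof: both start from the general left-adjoint formula for the kernel of a transform, reduce the problem to computing the dualizing sheaf of $Z^o(k,l)_{\C}$ (via the partial resolutions of Lemma \ref{lem:Z^o} and the divisor computations of Corollary \ref{cor:intersections}), exploit the identity $\binom{s+1}{2}+\binom{s}{2}=s^2$ to convert the $s^2$ coefficients in $\omega_{Z^o(k,l)_\C}$ into the $\binom{s}{2}$ coefficients of $\sL'(k,l)_\C$, and recombine $\rho^{-1}$ with the canonical class of the base variety using $\det(L_2/L_0)=\det(L_1/L_0)\otimes\det(L_2/L_1)=\det(L_1'/L_0)\otimes\det(L_2/L_1')$; this matches the argument in the paper (which follows [C1, Thm.~5.3], not Thm.~4.4 as you cite). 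One small refinement: the complement of $Z^o(k,l)_\C$ in $Z(k,l)_\C$ is entirely contained in the central fibre (since over $w\ne 0$ one has $L_1\cap L_1'=L_0$, so $Z(k,l)_w=Z^o(k,l)_w$), and the central fibre already has codimension $1$; combined with the codimension $\ge 4$ estimate of Lemma \ref{lem:Z^o} in that fibre this gives codimension $\ge 5$ in the family outright, which is cleaner than the semicontinuity argument you sketch and certainly suffices.
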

\begin{Remark}
The kernel $\sT(k,l)^L$ is induced by the restriction of $\sT(k,l)_{\C}^L$ to $Y(k,l) \times Y(l,k)$.
\end{Remark}
\begin{proof}
We will use that
$$\sT(k,l)_{\C}^L = \sigma(j_* (\omega_{Z^o(k,l)_{\C}} \otimes \sL(k,l)_{\C}^\vee \otimes \pi_1^* \omega^\vee_{Y(k,l)_{\C}}) \otimes \rho^{-1}).$$
The proof will follow that of \cite[Thm. 5.3]{C1} by showing that $\sigma(\sT(k,l)_{\C}^L)$ and $j_* \sL'(k,l)_{\C}$ restrict to the same thing on every fibre.

On a fibre over $w \ne 0$ this is easy to see. Now consider the fibre over $w=0$. By Proposition \ref{prop:dualizingsheaf} we have that $\omega_{Z^o(k,l)_{\C}} \otimes \sL(k,l)_{\C}^\vee$ is isomorphic to
\begin{align*}
& \O_{Z^o(k,l)_{\C}}(\sum_{s=0}^{N-l} s^2 [Z_s^o(k,l)]) \otimes L^{(l-k)} \otimes \det(L_2/L_0)^m \{a\} \otimes \O_{Z^o(k,l)_{\C}}(\sum_{s=0}^{N-l} - \binom{s+1}{2} [Z_s^o(k,l)]) \{b\} \\
\cong& \O_{Z^o(k,l)_{\C}}(\sum_{s=0}^{N-l} \binom{s}{2} [Z_s^o(k,l)]) \otimes L^{(l-k)} \otimes \det(L_2/L_0)^m \{-2m(k+l)-k(k+l-1)-2\}
\end{align*}
where $L = \det(L_2/L_1)^{\vee} \otimes \det(L_1'/L_0)$, $a = -2k^2-2m(k+l)-2$ and $b = -k(l-k-1)$. Now
$$\omega_{Y(k,l)_{\C}}|_{Y(k,l)} \cong \omega_{Y(k,l)} \otimes \O_{Y(k,l)}(-[Y(k,l)]) \cong \O_{Y(k,l)} \otimes \det(L_2/L_0)^m \{-2m(k+l)-2kl-2\}$$
where we use that $\O_{Y(k,l)_\C}([Y(k,l)]) \cong \O_{Y(k,l)_\C} \{2\}$ and that $\omega_{Y(k,l)} \cong \det(L_2/L_0)^m \{-2m(k+l)-2kl\}$ (see \cite[Lem. 5.7]{CKL1}). Moreover, $L^{l-k} \otimes \rho^{-1} \cong \det(L_1/L_0)^l \otimes \det(L_1'/L_0)^{-k}$. Hence
\begin{align*}
&\sigma(\sT(k,l)_\C^L)|_{Y(k,l) \times Y(l,k)} \\
\cong& j_* (\O_{Z^o(k,l)_\C} (\sum_{s=0}^{N-l} \binom{s}{2} [Z_s^o(k,l)])|_{Y(k,l) \times Y(l,k)} \otimes \det(L_1/L_0)^l \otimes \det(L_1'/L_0)^{-k} \{k(l-k+1)\} \\
\cong& j_* \sL_\C'(k,l)|_{Y(k,l) \otimes Y(l,k)}.
\end{align*}
This is what we need to prove.
\end{proof}

\begin{Corollary}\label{cor:adjoint}
We have
$$\sT(k,l)_\C^L \cong \sigma \Bigl( \sT(k,l)_\C \otimes \det(L_1/L_0)^{k+l-1} \otimes \det (L_1'/L_0)^{-l-k-1} \otimes \det (L_2/L_0)^{l-k+1} \Bigr).$$
\end{Corollary}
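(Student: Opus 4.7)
The plan is to deduce Corollary \ref{cor:adjoint} directly from Theorem~\ref{thm:inverse} by reducing it to an explicit identity of line bundles on $Z^o(k,l)_\C$. Let $\sM$ denote the line bundle $\det(L_1/L_0)^{k+l-1} \otimes \det(L_1'/L_0)^{-l-k-1} \otimes \det(L_2/L_0)^{l-k+1}$ on $Y(k,l)_\C \times_\C Y(l,k)_\C$.

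First I would observe that Theorem~\ref{thm:inverse} gives $\sT(k,l)_\C^L \cong \sigma(j_* \sL'(k,l)_\C)$ while Theorem~\ref{thm:kernel2} gives $\sT(k,l)_\C \cong j_* \sL(k,l)_\C$. Since $j_*\sL(k,l)_\C$ is supported on the closed subscheme $Z(k,l)_\C$, the projection formula yields $\sT(k,l)_\C \otimes \sM \cong j_*(\sL(k,l)_\C \otimes \sM|_{Z^o(k,l)_\C})$. Because $\sigma$ is an equivalence and $j$ is an open embedding with connected, dense image, the corollary reduces to proving the line bundle identity
\[ \sL'(k,l)_\C \;\cong\; \sL(k,l)_\C \otimes \sM|_{Z^o(k,l)_\C}. \]

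Second, I would unwind the explicit formulas. Using $\rho \cong \det(L_2/L_0)^{k-l} \otimes \det(L_1/L_0)^{-k} \otimes \det(L_1'/L_0)^{l}$ and the elementary identity $\binom{s+1}{2} - \binom{s}{2} = s$, a direct bookkeeping of determinantal exponents and the equivariant shifts $\{k(l-k\pm 1)\}$ reduces the display above to the single line bundle identity
\[ \O_{Z^o(k,l)_\C}\Bigl(\sum_{s=0}^{N-l} s \, [Z_s^o(k,l)]\Bigr) \;\cong\; \det(L_1/L_0) \otimes \det(L_1'/L_0) \otimes \det(L_2/L_0)^{-1} \{2k\} \]
on $Z^o(k,l)_\C$.

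Third, I would verify this identity by restricting to each component $Z_{s'}^o$ of the special fibre. The scheme-theoretic zero locus of $w$ on $Z^o(k,l)_\C$ is $\bigsqcup_s Z_s^o$ with multiplicity one, so $\O(\sum_s[Z_s^o])\cong\O\{2\}$. Combining this with $\O([Z_s^o])|_{Z_{s'}^o} \cong \O([D_{s',+}^o])$ for $s=s'+1$, $\O([D_{s',-}^o])$ for $s=s'-1$, and trivial for $|s-s'|\ge 2$, one extracts the self-intersection $\O([Z_{s'}^o])|_{Z_{s'}^o} \cong \O(-[D_{s',+}^o]-[D_{s',-}^o])\{2\}$. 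Summing with coefficients $s$ gives
\[ \O\Bigl(\sum_s s[Z_s^o]\Bigr)\Big|_{Z_{s'}^o} \;\cong\; \O\bigl([D_{s',+}^o]-[D_{s',-}^o]\bigr)\{2s'\} \;\cong\; \det(L_2/L_1)^{-1} \otimes \det(L_1'/L_0)\{2k\}, \]
where the second isomorphism uses Corollary~\ref{cor:intersections}. This matches the restriction of the right-hand side, since $\det(L_1/L_0) \otimes \det(L_2/L_0)^{-1} \cong \det(L_2/L_1)^{-1}$.

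Fourth, to promote this componentwise matching to a global isomorphism on $Z^o(k,l)_\C$, I would observe that on the open locus $\{w\ne 0\}$ both line bundles are $\C^\times$-equivariantly trivial: the left-hand side tautologically (no $Z_s^o$ appears), and the right-hand side via the generalized-eigenspace decomposition $L_2/L_0 = L_1/L_0 \oplus L_1'/L_0$ (giving $\det L_2/L_0 \cong \det L_1/L_0 \otimes \det L_1'/L_0$) together with the trivialization of $\O\{2k\}$ by the equivariant section $w^k$. Hence the difference of the two line bundles is pulled back from $\C$ and so is trivial since $\Pic(\C)=0$. The main obstacle is this last step of gluing fibrewise information into a global isomorphism, but it is routine and follows the argument of \cite[Cor.~5.4]{C1} essentially verbatim (alternatively one may invoke Lemma~\ref{lem:families} over an appropriate $2$-dimensional base).
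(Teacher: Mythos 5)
Your proof is correct and takes essentially the same route as the paper: reduce the corollary to a line bundle identity on $Z^o(k,l)_\C$ using Theorems~\ref{thm:kernel2} and~\ref{thm:inverse}, then verify that identity componentwise via $\O(\sum_s[Z^o_s])\cong\O\{2\}$ and Corollary~\ref{cor:intersections}, and glue over the base using $\Pic(\C)=0$. The only cosmetic difference is that your steps three and four re-derive the content of Lemma~\ref{lem:L} inline, whereas the paper simply cites that lemma and compares the two resulting expressions for $\sT(k,l)_\C$ and $\sigma(\sT(k,l)_\C^L)$ directly.
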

\begin{proof}
Using Lemma \ref{lem:L} we have
$$\sT(k,l)_\C \cong j_* \O_{Z^o(k,l)_\C}(\sum_{s=0}^{N-l} \binom{s}{2} [Z^o_s(k,l)]) \otimes \det(L_2/L_1)^\vee \otimes \det(L_1'/L_0) \otimes \rho \{k(l-k-1)+2k\}.$$
On the other hand,
$$\sigma(\sT(k,l)_\C^L) \cong j_* \O_{Z^o(k,l)_\C}(\sum_{s=0}^{N-l} \binom{s}{2} [Z^o_s(k,l)]) \otimes \det(L_1/L_0)^l \otimes \det(L_1'/L_0)^{-k} \{k(l-k+1)\}.$$
The result follows.
\end{proof}

Finally, we have the following identity. Although we do not use it in this paper it will be useful elsewhere in order to prove that the affine braid group acts. 

\begin{Corollary}\label{cor:affinebraid}
We have $\sT(l,k) * \Delta_* \det(L_1/L_0)^\vee * \sT(k,l) \cong \Delta_* \det(L_2/L_1)^\vee$.
\end{Corollary}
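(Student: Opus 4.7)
The plan is to reformulate the identity by composing with $\sT(k,l)^{-1} \cong \sT(k,l)^L$, and then to compare explicit line bundles on the open locus $Z^o(l,k)$. First, using the elementary identities $\Delta_* \sA * \sP \cong \sP \otimes \pi_2^* \sA$ and $\sP * \Delta_* \sA \cong \sP \otimes \pi_1^* \sA$ together with the invertibility of $\sT(k,l)$, the desired statement is equivalent to the identification
\begin{equation*}
\sT(l,k) \otimes \pi_1^* \det(L_1/L_0)^\vee \cong \sT(k,l)^L \otimes \pi_2^* \det(L_2/L_1)^\vee
\end{equation*}
in $D^{\C^\times}(Y(l,k) \times Y(k,l))$, with $\det(L_1/L_0)$ the determinant of the rank-$l$ bundle on the first factor $Y(l,k)$ and $\det(L_2/L_1)$ that of the rank-$l$ bundle on the second factor $Y(k,l)$.

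By Theorem \ref{thm:inverse} we have $\sT(k,l)^L \cong j_* \sigma(\sL'(k,l))$, where $\sigma$ identifies $Z^o(k,l) \subset Y(k,l) \times Y(l,k)$ with $Z^o(l,k) \subset Y(l,k) \times Y(k,l)$. The projection formula then expresses the right hand side of the reformulated identity as $j_*$ of an explicit line bundle on $Z^o(l,k)$. In parallel, we would produce an explicit description $\sT(l,k) \cong j_* \sL(l,k)$ for a line bundle $\sL(l,k)$ on $Z^o(l,k)$, by rerunning the arguments of Propositions \ref{prop:kernel} and \ref{prop:equivalence} applied to the Rickard complex that defines $\sT(l,k)$, namely $\Theta'_s = \sE^{(l-k+s)} * \sF^{(s)}[-s]\{s\}$ (the opposite of the complex defining $\sT(k,l)$). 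The same intersection-of-resolutions and codimension estimates should yield explicit restrictions of $\sL(l,k)$ to each component $Z^o_s(l,k)$.

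It then suffices to check that the two line bundles agree on each component $Z^o_s(l,k)$. Lemma \ref{lem:intersections} and Corollary \ref{cor:intersections} (adapted to $Z^o(l,k)$) convert the divisor-class factors $[D^o_{s,\pm}]$ into determinants of quotient bundles, and the identity
\begin{equation*}
\det(L_2/L_0) \cong \det(L_1/L_0) \otimes \det(L_2/L_1) \cong \det(L_1'/L_0) \otimes \det(L_2'/L_1'),
\end{equation*}
which holds on $Z^o(l,k)$ because $L_2 = L_2'$ and $L_0 = L_0'$ there, translates between line bundles pulled back from the two factors.

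The main obstacle is the explicit computation of $\sT(l,k)$: the appendix carried out the analogous analysis only under the assumption $k \le l$, so for $\sT(l,k)$ the non-symmetric roles of $k$ and $l$ in the grading shifts, in the $\binom{s+1}{2}$-type divisor coefficients, and in the indexing of components must all be tracked carefully. Once this explicit formula is in place, the final comparison of line bundles is a routine (if tedious) bookkeeping computation.
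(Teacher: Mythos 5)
Your proposal is correct and follows the same basic strategy as the paper: reduce to an identity between $\sT(l,k)$ and a line-bundle twist of $\sT(k,l)^L$, use Theorem \ref{thm:inverse} for the latter, and compute $\sT(l,k)$ explicitly from its Rickard complex. The one place where the paper is organized differently is exactly the step you flag as the main obstacle. Rather than re-running Proposition \ref{prop:equivalence} to exhibit $\sT(l,k)$ as $j_*$ of an explicit line bundle on $Z^o(l,k)$, the paper only redoes the (easier) Proposition \ref{prop:kernel}-type computation for the terms $\Theta'_s = \sF^{(s)} * \sE^{(l-k+s)}[-s]\{s\}$ of the complex defining $\sT(l,k)$, and observes that $\sigma(\Theta'_s) \cong \Theta_s \otimes \rho^{-1} \otimes \rho'$ with $\rho^{-1}\otimes\rho'$ a fixed line bundle pulled back from the ambient product, independent of $s$. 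Since every term of the complex is twisted by the same ambient line bundle, so is its convolution, giving $\sigma(\sT(l,k)) \cong \sT(k,l) \otimes \det(L_1/L_0)^{k+l} \otimes \det(L_1'/L_0)^{-k-l} \otimes \det(L_2/L_0)^{l-k}$ with no further work; combining this with Corollary \ref{cor:adjoint} (which repackages Theorem \ref{thm:inverse}) yields $\sT(l,k) \cong \sT(k,l)^L \otimes \det(L_1/L_0) \otimes \det(L_1'/L_0) \otimes \det(L_2/L_0)^{-1}$, from which the corollary follows by the same $\Delta_*$-juggling you describe. Your component-by-component comparison on $Z^o_s(l,k)$ would also work, but you would either have to notice this termwise line-bundle relation anyway or genuinely repeat the convolution identification of \cite[Thm.~3.8]{C1} for the reversed complex; the paper's shortcut makes the ``careful tracking'' you worry about unnecessary.
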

\begin{proof}
The kernel $\sT(l,k) \in D^{\C^\times}(Y(l,k) \times Y(k,l))$ is given by ${\rm Conv}(\Theta'_{N-l} \rightarrow \dots \rightarrow \Theta'_1 \rightarrow \Theta'_0) \{-k\}$ where $\Theta'_s = \sF^{(s)} * \sE^{(l-k+s)} [-s]\{s\}$. Here
\begin{align*}
\sE^{(l-k+s)} &= \O_{W^{l-k+s}} \otimes \det(V/L_0)^{l-k+s} \otimes \det(L_1'/V)^{-k+s} \{(k-s)(l-k+s)\} \\
\sF^{(s)} &= \O_{W^{s}} \otimes \det(L_2/L_1)^{-s} \otimes \det(L_1/V)^{l} \{sl\}
\end{align*}
with $W^s$ and $W^{l-k+s}$ the same varieties as before. A similar calculation as that in Proposition \ref{prop:kernel} shows that
$$\Theta'_s \cong i_* j_* \O_{Z_s^o(k,l)} \otimes \det(L_2/L_1)^{-s} \otimes \det(L_1'/L_0)^s \otimes \rho' [-s] \{kl - (k-s)^2 + s\}$$
where $\rho' = \det(L_1/L_0)^l \otimes \det(L_1'/L_0)^{-k}$. It follows that, as objects in $D^{\C^\times}(Y(k,l) \times Y(l,k))$, we have
$$\sigma(\Theta_s') \cong \Theta_s \otimes \rho^{-1} \otimes \rho' \cong \Theta_s \otimes \det(L_1/L_0)^{k+l} \otimes \det(L_1'/L_0)^{-k-l} \otimes \det(L_2/L_0)^{l-k}.$$
Thus we obtain
$$\sigma(\sT(l,k)) \cong \sT(k,l) \otimes \det(L_1/L_0)^{k+l} \otimes \det(L_1'/L_0)^{-k-l} \otimes \det(L_2/L_0)^{l-k}.$$
Putting this together with Corollary \ref{cor:adjoint} we obtain
$$\sT(l,k) \cong \sT(k,l)^L \otimes \det(L_1/L_0) \otimes (L_1'/L_0) \otimes \det(L_2/L_0)^{-1}$$
from which the result follows.
\end{proof}

\subsection{Some technical calculations}

The rest of this section contains some technical computations that we used in earlier sections.

\begin{Lemma}\label{lem:L}
We have
\begin{equation}\label{eq:5}
\O_{Z^o(k,l)_\C} \otimes \det(L_2/L_1)^\vee \otimes \det(L_1'/L_0) \cong \O_{Z^o(k,l)_\C}(\sum_{s=1}^{N-l} s [Z^o_s(k,l)]) \{-2k\}.
\end{equation}
\end{Lemma}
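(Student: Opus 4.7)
My approach is to realize the claimed isomorphism as the divisor of zeros of a natural determinantal section. Consider the $\C^\times$-equivariant vector bundle map of rank-$l$ bundles
\[
\phi : L_1'/L_0 \longrightarrow L_2/L_1
\]
on $Z^o(k,l)_\C$, obtained by composing $L_1' \hookrightarrow L_2$ with the projection $L_2 \twoheadrightarrow L_2/L_1$. Then $\det\phi$ is a weight-zero global section of $\det(L_2/L_1) \otimes \det(L_1'/L_0)^\vee$, and the plan is to prove its zero divisor equals $\sum_{s=0}^{N-l}(k-s)[Z^o_s(k,l)]$.

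First I would locate the zeros of $\det\phi$. Over a fibre with $w \neq 0$, the operator $z$ on $L_2/L_0$ has simple spectrum $\{w_1, w_2\}$, and the eigenvalue conditions in the definition of $Z(k,l)_\C$ identify $L_1'/L_0$ with the $w_1$-eigenspace while $L_2/L_1$ projects isomorphically onto the same eigenspace; thus $\phi$ is an isomorphism there. On the central fibre, $\ker\phi = (L_1 \cap L_1')/L_0$, which by definition of $Z^o_s(k,l)$ has generic dimension $k-s$, so $\phi$ has generic corank $k-s$ along $Z^o_s(k,l)$ and $\det\phi$ vanishes there whenever $s < k$.

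The main technical obstacle is showing that $\det\phi$ vanishes along $Z^o_s(k,l)$ to order \emph{exactly} $k-s$. At a generic point $p \in Z^o_s$ I would choose, on a neighbourhood $U$ in $Z^o(k,l)_\C$, frames of $L_1'/L_0$ and $L_2/L_1$ adapted to the corank-$(k-s)$ degeneracy, so that the matrix of $\phi$ takes the block form $\left(\begin{smallmatrix} A_{11} & A_{12} \\ A_{21} & A_{22} \end{smallmatrix}\right)$ with $A_{22}$ an invertible $(l-k+s) \times (l-k+s)$ block near $Z^o_s$ and the three other blocks divisible by a local equation $w'$ of $Z^o_s$. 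A Schur-complement expansion then gives $\det\phi = \det(A_{22}) \cdot (w')^{k-s}\det(B_0) + O((w')^{k-s+1})$, where $B_0$ is the $(k-s)\times(k-s)$ matrix of normal derivatives of the degenerate block. Transversality reduces to showing that $B_0$ is generically invertible on $Z^o_s$; I would verify this by moving off $Z^o_s$ into the locus $w \neq 0$, where the eigenvalue conditions force $L_1$ and $L_1'$ into disjoint $\pm w$-eigenspaces so that $(L_1 \cap L_1')/L_0$ collapses to zero, meaning $\ker\phi$ drops by the full dimension $k-s$ in this single transverse direction.

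Granting transversality, $\det\phi$ gives an equivariant isomorphism
\[
\det(L_2/L_1) \otimes \det(L_1'/L_0)^\vee \;\cong\; \O_{Z^o(k,l)_\C}\bigl(\textstyle\sum_{s=0}^{N-l}(k-s)[Z^o_s(k,l)]\bigr).
\]
Since the central fibre $[Z^o(k,l)] = \sum_s [Z^o_s(k,l)]$ is cut out by the pullback of $w \in \Gamma(\O_\C)$ of $\C^\times$-weight $2$, we have $\O(\sum_s[Z^o_s(k,l)]) \cong \O\{2\}$. Rewriting $\sum_s (k-s)[Z^o_s] = k\sum_s [Z^o_s] - \sum_s s[Z^o_s]$ and inverting gives the claim. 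As a consistency check, restriction to each component $Z^o_t$ combined with the normal-bundle identity $[Z^o_t]|_{Z^o_t} = \{2\} - [D^o_{t,+}] - [D^o_{t,-}]$ reproduces Corollary \ref{cor:intersections}.
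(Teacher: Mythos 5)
Your strategy is genuinely different from the paper's: you try to realize the lemma directly as the divisor of zeros of the determinant $\det\phi$ of the natural bundle map $\phi : L_1'/L_0 \to L_2/L_1$ of rank-$l$ bundles on $Z^o(k,l)_\C$. This is a clean idea, and the bookkeeping you do at the end is correct — if $\mathrm{div}(\det\phi)=\sum_s(k-s)[Z^o_s(k,l)]$ and one uses $\O_{Z^o(k,l)_\C}(\sum_s[Z^o_s(k,l)])\cong\O\{2\}$, the displayed formula follows by inverting. The identification of $\ker\phi$ with $(L_1\cap L_1')/L_0$ and the fact that $\phi$ is an isomorphism over $w\ne 0$ are also both correct.

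However, there is a genuine gap in the key step: showing that $\det\phi$ vanishes along $Z^o_s(k,l)$ to order \emph{exactly} $k-s$. The Schur-complement reduction correctly shows the order is at least $k-s$ and reduces the exactness to the nondegeneracy of the intrinsic normal derivative $B_0 : \ker\phi|_{Z^o_s} \to \mathrm{coker}\,\phi|_{Z^o_s}$. But your proposed verification — ``moving off $Z^o_s$ into $w\ne 0$, where $\ker\phi$ drops to zero'' — only re-proves that $\det\phi$ is supported on the central fibre; it says nothing about first-order behaviour. A toy model shows the issue: if $\phi$ were locally $\mathrm{diag}(w^2,1,\dots,1)$ near $Z^o_s$, then the kernel still drops from dimension $1$ to $0$ as soon as $w\ne 0$, yet the vanishing order is $2$, not $1$. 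To close the gap you would need an actual computation of $B_0$ at a generic point of $Z^o_s$, which is real work and is not supplied.

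The paper avoids this entirely. It also starts from the observation that the two sides of the isomorphism agree over $w\ne 0$, then uses triviality of $\mathrm{Pic}(\C)$ to reduce to the central fibre, and there checks the isomorphism component-by-component on each $Z^o_t(k,l)$. On a fixed component the divisor $\sum_s s[Z^o_s]$ restricts to $-[D^o_{t,-}]+[D^o_{t,+}]$ (after absorbing the central-fibre class into a $\{2t\}$ shift), and the needed identity
\[
\O_{Z^o_t(k,l)}\bigl([D^o_{t,+}]-[D^o_{t,-}]\bigr)\cong \det(L_2/L_1)^\vee\otimes\det(L_1'/L_0)\{2(k-t)\}
\]
is exactly Corollary \ref{cor:intersections}, which was proved earlier by identifying $D^o_{t,\pm}$ as zero loci of explicit $(k-t)\times(k-t)$ determinantal sections on the \emph{smooth} variety $Z^o_t(k,l)$ (via the auxiliary $W_1,W_2$). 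That calculation is the precise place where the transversality you need is actually established, so in effect your proof needs Corollary \ref{cor:intersections} as an input rather than producing it as a ``consistency check.''
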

\begin{proof}
The natural inclusion map $L_1'/L_0 \rightarrow L_2/L_1$ on $Z^o(k,l)_\C$ is an isomorphism over any fibre over $w \ne 0$ because $L_1 \cap L_1' = L_0$ in this case. Hence (\ref{eq:5}) holds when restricted to a general fibre.

Since every line bundle on the base $\C$ is trivial it remains to show that
$$\O_{Z^o(k,l)} \otimes \det(L_2/L_1)^\vee \otimes \det(L_1'/L_0) \cong \O_{Z^o(k,l)_\C}(\sum_{s=1}^{N-l} s [Z^o_s(k,l)]) \{-2k\}|_{Z^o(k,l)}.$$
It suffices to show that for every $t$ we have
$$\O_{Z^o_t(k,l)} \otimes \det(L_2/L_1)^\vee \otimes \det(L_1'/L_0) \cong \O_{Z^o(k,l)_\C}(\sum_{s=1}^{N-l} s [Z^o_s(k,l)])|_{Z^o_t(k,l)} \{-2k\}.$$
Now we have
\begin{align*}
& \O_{Z^o(k,l)_\C}(\sum_{s=1}^{N-l} s [Z^o_s(k,l)])|_{Z^o_t(k,l)} \{-2k\} \\
\cong& \O_{Z^o(k,l)_\C}((t-1)[Z^o_{t-1}(k,l)] + t[Z^o_t(k,l)] + (t+1)[Z^o_{t+1}(k,l)])|_{Z^o_t(k,l)} \{-2k\} \\
\cong& \O_{Z^o(k,l)_\C}(-[Z^o_{t-1}(k,l)] + [Z^o_{t+1}(k,l)])|_{Z^o_t(k,l)} \{2t-2k\} \\
\cong& \O_{Z^o_t(k,l)}(-[D^o_{t,-}] + [D^o_{t,+}]) \{2(t-k)\} \\
\cong& \O_{Z^o_t(k,l)} \otimes \det(L_2/L_1)^\vee \otimes \det(L_1'/L_0)
\end{align*}
where the first isomorphism uses that $Z^o_t(k,l) \cap Z^o_{t'}(k,l) = \emptyset$ if $|t-t'| > 1$, the second uses that $\O_{Z^o(k,l)_\C}(\sum_{s=0}^{N-l}[Z^o_s(k,l)]) \cong \O_{Z^o(k,l)_\C} \{2\}$, while the last uses Corollary \ref{cor:intersections}.
\end{proof}

\begin{Proposition}\label{prop:dualizingsheaf}
The dualizing sheaf of $Z^o(k,l)_\C$ is
\begin{equation}\label{eq:3}
\omega_{Z^o(k,l)_\C} \cong \O_{Z^o(k,l)_\C}(\sum_{s=0}^{N-l} s^2 [Z_s^o(k,l)]) \otimes L^{(l-k)} \otimes \det(L_2/L_0)^{m} \{-2k^2 - 2m(k+l) - 2\}
\end{equation}
where $L = \det(L_2/L_1)^{\vee} \otimes \det(L_1'/L_0)$.
\end{Proposition}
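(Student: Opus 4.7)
The plan is to establish the isomorphism by restricting both sides to the general fibre and to each component $Z_t^o(k,l)$ of the special fibre of the family $Z^o(k,l)_\C \to \C$, and then to conclude using triviality of $\mathrm{Pic}(\C)$ together with the fact that $\C^\times$-equivariant line bundles on $Z^o(k,l)_\C$ are pinned down by their restrictions to the general fibre and to each component of the central fibre.

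First I would verify that $Z^o(k,l)_\C$ is smooth. The remark preceding Theorem \ref{thm:kernel2} asserts that each $Z_s^o(k,l)$ is a Cartier divisor in $Z^o(k,l)_\C$, and Lemma \ref{lem:Z^o} shows each component $Z_s^o(k,l)$ is smooth; together with flatness over $\C$ and smoothness of the general fibre (which by Lemma \ref{lem:graph} is the graph of an isomorphism) this forces smoothness of the total space. In particular $\omega_{Z^o(k,l)_\C}$ is a well-defined line bundle and ordinary adjunction applies along each Cartier divisor $Z_t^o(k,l)$.

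Next I would perform the two fibre checks. Over $w \ne 0$, the fibre $Z^o(k,l)_w$ is identified via Lemma \ref{lem:graph} with $Y(k,l)_w$, whose canonical bundle is $\det(L_2/L_0)^m\{-2m(k+l)-2kl\}$ by \cite[Lem. 5.7]{CKL1}. On the general fibre the divisor sum $\sum s^2[Z^o_s(k,l)]$ restricts to zero, and the line bundle $L^{(l-k)}$ becomes trivial (since $L_1\cap L_1'=L_0$ there forces $L_2/L_1\cong L_1'/L_0$ as the opposite eigenspace of $z|_{L_2/L_0}$), so the claimed RHS collapses to the correct expression after the conormal bundle $\{-2\}$ of the fibre in $Z^o(k,l)_\C$ is accounted for. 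Over $w=0$, for each component $Z_t^o(k,l)$ I would compute $\omega_{Z_t^o(k,l)}$ directly using the partial resolution $\pi_t\colon Z_t'(k,l) \to Z_t(k,l)$ from Lemma \ref{lem:Z^o} (an isomorphism over $Z_t^o$): since $Z_t'(k,l)$ is an iterated Grassmannian bundle, its canonical bundle is given by the standard formula in terms of determinants of the tautological subquotients $W_1/L_0,\ L_1/W_1,\ L_1'/W_1,\ L_2/W_2$ and the weight shifts from the map $z$. Adjunction along the Cartier divisor $Z_t^o \subset Z^o(k,l)_\C$ relates $\omega_{Z^o(k,l)_\C}|_{Z_t^o}$ to $\omega_{Z_t^o}$ via $\O_{Z^o(k,l)_\C}([Z_t^o])|_{Z_t^o}$; using the identity $\O_{Z^o(k,l)_\C}(\sum_s [Z^o_s])\cong \O\{2\}$ from the proof of Lemma \ref{lem:L} together with Corollary \ref{cor:intersections} (which expresses $\O([D^o_{t,\pm}])$ as an explicit determinantal line bundle) lets me match the restricted proposed formula with $\omega_{Z_t^o}$ as computed from the resolution.

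The main obstacle is the numerical bookkeeping: the coefficients $s^2$ in the divisor sum are precisely chosen so that, upon restriction to $Z_t^o$, the differences $s^2-(s\pm 1)^2=\mp(2s+1)$ combined with the normal bundle $\O([Z_t^o])|_{Z_t^o}$ produce exactly the combination of multiples of $[D^o_{t,+}]$ and $[D^o_{t,-}]$ that, via Corollary \ref{cor:intersections}, supplies the determinantal correction needed to convert the iterated-Grassmannian canonical bundle on $Z_t^o$ into the RHS restricted to $Z_t^o$; pinning down the grading shift $\{-2k^2-2m(k+l)-2\}$ to exact agreement is the delicate piece. Once agreement is verified on the general fibre and on every component of the central fibre, the line bundle $\omega_{Z^o(k,l)_\C}\otimes(\text{RHS})^{-1}$ restricts trivially (equivariantly) to each fibre of $Z^o(k,l)_\C\to\C$, so it is pulled back from $\C$; triviality of $\mathrm{Pic}(\C)$ and matching of weights then force it to be trivial, completing the proof.
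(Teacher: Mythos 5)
Your proposal takes essentially the same route as the paper: reduce to matching on the general fibre and on each component $Z_t^o(k,l)$ of the central fibre (conclude by triviality of $\mathrm{Pic}(\C)$), and on each central-fibre component compute via adjunction, the relation $\O_{Z^o(k,l)_\C}\bigl(\sum_s [Z_s^o(k,l)]\bigr) \cong \O\{2\}$, Corollary \ref{cor:intersections}, and the canonical bundle of $Z_t^o(k,l)$ computed through the resolution from Lemma \ref{lem:Z^o} as an iterated Grassmannian bundle --- the last step is exactly what the paper isolates as Lemma \ref{lem:dualizingsheaf}. The bookkeeping with the coefficients $s^2$ that you describe is the same telescoping the paper carries out in its equation (\ref{eq:1}).

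One point to flag: your opening claim that smoothness of $Z^o(k,l)_\C$ \emph{follows} from each $Z_s^o(k,l)$ being a smooth Cartier divisor together with flatness and smoothness of the general fibre is not a valid implication in general. Those hypotheses by themselves do not force the components of the central fibre to meet transversally, which is what is really needed to conclude that the total space is regular at points of $D^o_{s,\pm}$ (and hence that $\omega_{Z^o(k,l)_\C}$ is a line bundle and adjunction applies in the form you use). What actually underlies this is the explicit local analysis in \cite{C1}, which the paper is implicitly relying on (and cites via the ``follows in exactly the same way'' remarks). Since the needed regularity is established there, this is an expositional gap in your justification rather than a flaw in the overall argument, but as written the sentence asserts an inference that does not hold.
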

\begin{proof}
Away from the central fibre $Z(k,l)_\C$ is isomorphic to $Y(k) \times Y(l)$. Over such a fibre we have $L_2/L_1 \cong L_1'/L_0$. So, ignoring the $\C^\times$-equivariance, the right hand side of (\ref{eq:3}) restricts on this general fibre to give $\det(L_2/L_0)^m$ which is indeed the canonical bundle. It remains to show that $\omega_{Z^o(k,l)}$ is isomorphic to
$$(\O_{Z^o(k,l)_\C}(\sum_{s=0}^{N-l} s^2 [Z_s^o(k,l)] + [Z^o(k,l)]) \otimes L^{(l-k)} \otimes \det(L_2/L_0)^m \{-2k^2 - 2m(k+l) - 2\})|_{Z^o(k,l)}.$$
Now $\O_{Z^o(k,l)_\C}([Z^o(k,l)]) \cong \O_{Z^o(k,l)_\C} \{2\}$. Also, $Z^o(k,l)$ is the union of $N-l+1$ smooth components $Z^o_t(k,l)$ where $Z^o_t(k,l) \cap Z^o_{t'}(k,l) = \emptyset$ if $|t-t'| > 1$. Thus
$$\omega_{Z^o(k,l)}|_{Z^o_t(k,l)} \cong \omega_{Z^o_t(k,l)}([D^o_{t,-}] + [D^o_{t,+}])$$
so it suffices to show that
\begin{equation}\label{eq:1}
\omega_{Z^o_t(k,l)}([D^o_{t,-}] + [D^o_{t,+}]) \cong (\O_{Z^o(k,l)_\C}(\sum_{s=0}^{N-l} s^2 [Z_s^o(k,l)]) \otimes L^{(l-k)} \otimes \det(L_2/L_0)^m \{-2k^2-2m(k+l)\})|_{Z^o_t(k,l)}
\end{equation}
for every $t=0, \dots, N-l$. Now $\O_{Z^o(k,l)_\C}([Z_t^o(k,l)]) \cong \O_{Z^o(k,l)_\C}(\sum_{s \ne t} - [Z_s^o(k,l)]) \{2\}$ so the right hand side of (\ref{eq:1}) equals
\begin{align*}
& \O_{Z^o_t(k,l)}((-2t+1)[D^o_{t,-}] + (2t+1)[D^o_{t,+}]) \otimes L^{(l-k)} \otimes \det(L_2/L_0)^m \{-2k^2 + 2t^2 - 2m(k+l)\} \\
\cong& \O_{Z^o_t(k,l)}([D^o_{t,-}] + [D^o_{t,+}]) \otimes L^{(l-k+2t)} \otimes \det(L_2/L_0)^m  \{4t(k-t) - 2k^2 + 2t^2 - 2m(k+l)\}
\end{align*}
where we used Corollary \ref{cor:intersections} to get the last isomorphism. By Lemma \ref{lem:dualizingsheaf} this equals the left hand side of (\ref{eq:1}).
\end{proof}

\begin{Lemma}\label{lem:dualizingsheaf}
The dualizing sheaf of $Z^o_s(k,l)$ is
$$\omega_{Z^o_s(k,l)} \cong (\det(L_2/L_1)^{\vee} \otimes \det(L_1'))^{(l-k+2s)} \otimes \det(L_2/L_0)^m \{-2m(k+l)-2(k-s)^2\}.$$
\end{Lemma}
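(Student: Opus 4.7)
My plan is to realise $Z^o_s(k,l)$, via Lemma~\ref{lem:Z^o} and Remark~\ref{Rem1}, as an iterated Grassmannian bundle over the point $\{L_0\}$ using the auxiliary subspaces $W_1, W_2$ with $L_0 \subset W_1 \subset L_1, L_1' \subset W_2 \subset L_2$. Projecting in a convenient order, the successive fibres are
\[
\G(k-s,z^{-1}L_0/L_0),\ \G(l-k+2s,z^{-1}L_0/W_1),\ \G(s,W_2/W_1),\ \G(l-k+s,W_2/W_1),\ \G(k-s,z^{-1}W_1/W_2),
\]
parametrising $W_1$, $W_2$, $L_1$, $L_1'$, and $L_2$ respectively; a straightforward dimension count confirms the total equals $k(m-k)+l(m-l)$.

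At each stage I apply $\omega_{\G(a,V)/B} \cong \det(S)^{\dim V}\otimes\det(V)^{-a}$. For the three Grassmannians whose ambient $V$ has the form $z^{-1}(-)/(-)$, the identification via the weight-$2$ map $z$ (yielding $z^{-1}L_0/L_0 \cong (L_0/zL_0)\{2\}$ and its relative analogues) produces explicit weight shifts $\{-2m(k-s)\}$, $\{-2m(l-k+2s)\}$, $\{-2(m+k-s)(k-s)\}$, together with determinant contributions $\det(W_1/L_0)^{l-k+2s}$ and $\det(W_2/W_1)^{k-s}$ read off from the short exact sequence $0 \to W_1/L_0 \to z^{-1}L_0/L_0 \to z^{-1}L_0/W_1 \to 0$ and its analogue for $z^{-1}W_1/W_2$. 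The two Grassmannians inside $W_2/W_1$ contribute $\det(L_1/W_1)^{l-k+2s}\otimes\det(W_2/W_1)^{-s}$ and $\det(L_1'/W_1)^{l-k+2s}\otimes\det(W_2/W_1)^{-(l-k+s)}$ with no additional shift.

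Multiplying the five relative dualizing sheaves yields
\[
\det(W_1/L_0)^{m+l-k+2s}\otimes\det(W_2/W_1)^{m-l+k-2s}\otimes\det(L_1/W_1)^{l-k+2s}\otimes\det(L_1'/W_1)^{l-k+2s}\otimes\det(L_2/W_2)^{m+k-l-2s}
\]
with total shift $\{-2m(k+l)-2(k-s)^2\}$. The final step is to convert this into the claimed form: using $\det(W_2/W_1) = \det(L_1/W_1)\otimes\det(W_2/L_1)$ together with the filtrations $L_0 \subset W_1 \subset L_1 \subset W_2 \subset L_2$ and $L_0 \subset W_1 \subset L_1' \subset W_2 \subset L_2$, the $W_i$-dependent factors reassemble into $\det(L_2/L_0)^m\otimes(\det(L_2/L_1)^\vee\otimes\det(L_1'/L_0))^{l-k+2s}$, matching the claim.

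The main obstacle is careful bookkeeping. In particular one must distinguish $L_1 \in \G(s,W_2/W_1)$ from $L_1' \in \G(l-k+s,W_2/W_1)$ (which have different ranks inside $W_2/W_1$) and correctly shift $\C^\times$-weights through every identification $z^{-1}(-)/(-) \cong (-)/z(-)\{2\}$; the resulting cancellations are essentially forced by the requirement that the final expression depend only on $L_1,L_1',L_2$ and not on the auxiliary $W_1,W_2$.
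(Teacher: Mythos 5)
Your proposal is correct and takes essentially the same route as the paper's proof: both pass to the resolution $Z''_s(k,l)$ (on which $Z^o_s(k,l)$ sits as an open subset) and compute $\omega_{Z''_s(k,l)}$ by multiplying relative dualizing sheaves of a tower of Grassmannian bundles. The only difference is that the paper stops after splitting off the $\G(s,W_2/W_1)\times\G(l-k+s,W_2/W_1)$ fibres and then cites \cite[Lem. 5.7]{CKL1} for the canonical bundle of the remaining base $\{L_0\subset W_1\subset W_2\subset L_2\}$, whereas you continue the reduction all the way down to a point, re-deriving that cited formula; your five relative contributions, the dimension count $k(m-k)+l(m-l)$, the reassembly into $\det(L_2/L_0)^m\otimes(\det(L_2/L_1)^\vee\otimes\det(L_1'/L_0))^{l-k+2s}$, and the total equivariant shift $\{-2m(k+l)-2(k-s)^2\}$ all check out.
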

\begin{proof}
As in the proof of Lemma \ref{lem:Z^o}, $Z_s(k,l)$ has a natural resolution
$$Z''_s(k,l) := \{ L_0 \xrightarrow{k-s} W_1 \overset{s}{\underset{l-k+s}{\rightrightarrows}} \begin{matrix} L_1 \\ L_1' \end{matrix} \overset{l-k+s}{\underset{s}{\rightrightarrows}} W_2 \xrightarrow{k-s} L_2 : zL_2 \subset W_1, zW_2 \subset L_0\}$$
where the map $\pi: Z''_s(k,l) \rightarrow Z_s(k,l)$ is an isomorphism over $Z_s^o(k,l)$. Hence $\omega_{Z^o_s(k,l)} \cong \omega_{Z''_s(k,l)}|_{Z^o_s(k,l)}$.

Now consider the projection map
\begin{equation}\label{eq:2}
p: Z''_s(k,l) \rightarrow \{L_0 \xrightarrow{k-s} W_1 \xrightarrow{l-k+2s} W_2 \xrightarrow{k-s} L_2:  zL_2 \subset W_1, zW_2 \subset L_0\}
\end{equation}
which forgets $L_1$ and $L_1'$. This is a $\G(s, W_2/W_1) \times \G(l-k+s,W_2/W_1)$ fibration so the relative cotangent bundle is
\begin{eqnarray*}
\omega_{p}
&\cong& (\det(L_1/W_1)^{l-k+s} \otimes \det(W_2/L_1)^{-s}) \otimes (\det(L_1'/W_1)^{s} \otimes \det(W_2/L_1')^{-l+k-s}) \\
&\cong& (\det(L_2/L_1)^\vee \otimes \det(L_1'))^{l-k+2s} \otimes (\det(L_2/W_2)^\vee \otimes \det(W_1))^{-l+k-2s}.
\end{eqnarray*}
On the other hand, the canonical bundle of the right hand side of (\ref{eq:2}) is
$$\det(L_2/L_0)^m \otimes \det(L_2/W_2)^{-l+k-2s} \otimes \det(W_1/L_0)^{l-k+2s} \{-2m(k+l)-2(k-s)^2\}$$
(see \cite[Lem. 5.7]{CKL1}). Putting this together we get the result.
\end{proof}

\end{document}